\newcommand{\sn}[1]{\textcolor{black}{#1}}
\newcommand{\ali}[1]{\textcolor{black}{#1}}
\def\vext{V_{\rm ext}}
\def\vint{V_{\rm int}}
\def\be{\begin{equation}}
\def\ee{\end{equation}}
\def\beq{\begin{eqnarray}}
\def\eeq{\end{eqnarray}}
\def\beqs{\begin{eqnarray*}}
\def\eeqs{\end{eqnarray*}}
\def\ea{\end{array}}
\def\ea{\end{array}}
\def\ds{\displaystyle}
\def\RR{\mathbb{R}}
\def\CC{\mathbb{C}}
\def\11{{\rm 1~\hspace{-1.5ex}1} }
\newcommand{\rfb}[1]{\mbox{\rm
   (\ref{#1})}\ifx\undefined\stillediting\else:\fbox{$#1$}\fi}
\def\section{\@startsection {section}{1}{\z@}{-3.5ex plus -1ex minus
    -.2ex}{2.3ex plus .2ex}{\large\bf}}
\font\eufm=eufm10\font\eufms=eufm10\font\eufmss=eufm10\newfam\eufam
\newtheorem{theorem}{Theorem}[section]
\newtheorem{lemma}[theorem]{Lemma}
\newtheorem{corollary}[theorem]{Corollary}
\newtheorem{remark}[theorem]{Remark}
\newtheorem{assumption}[theorem]{Assumption}
\def\section{\@startsection {section}{1}{\z@}{-3.5ex plus -1ex minus
    -.2ex}{2.3ex plus .2ex}{\large\bf}}
\begin{document}
\thispagestyle{empty}

\title[Schr\"odinger operator in graphs]{Dispersive effects for the Schr\"odinger equation on finite metric graphs with infinite ends}

\date\today

\author{Felix Ali Mehmeti}
\address{Universit\'e Polytechnique Hauts-de-France,
 C\'ERAMATHS/DMATHS and FR CNRS 2037, F-59313 Valenciennes, France}
\email{Felix.Ali-Mehmeti@uphf.fr}
\author{Ka\"{\i}s Ammari}
\address{LR Analysis and Control of PDEs, LR22ES03, Department of Mathematics, Faculty of Sciences of Monastir,
University of Monastir, 5019 Monastir, Tunisia}
 \email{kais.ammari@fsm.rnu.tn}
\author{Serge Nicaise}
\address{Universit\'e Polytechnique Hauts-de-France,
C\'ERAMATHS/DMATHS and FR CNRS 2037, F-59313 Valenciennes, France}
 \email{Serge.Nicaise@uphf.fr}
 
%
%
%============================INTRODUCTION============================%%
%
\begin{abstract} 
We study the free Schr\"odinger equation on finite metric graphs with infinite ends. We give sufficient conditions to obtain the
$L^1
({\mathcal R}) \rightarrow L^\infty ({\mathcal R})$ 
time decay rate at least $t^{-1/2}$.
These conditions allow certain metric graphs with circles and/or with commensurable lengths of the bounded edges.
Further we study the dynamics of the probability flow between the bounded sub-graph and the unbounded ends.  
\end{abstract}

\subjclass[2010]{34B45, 47A60, 34L25, 35B20, 35B40}
\keywords{Dispersive estimate, Schr\"odinger operator, connected graph}

\maketitle

%\tableofcontents

\section{Introduction} \label{formulare}

A characteristic feature of the Schr\"odinger equation is the loss
of the localization of wave packets during evolution, the
dispersion. This effect can be measured by $L^{\infty}$-time decay,
which implies a spreading out of the solutions, due to the time
invariance of the $L^{2}$-norm. The well known fact that the free
Schr\"odinger group in $\RR^n$ considered as an operator family from
$L^{1}$ to $L^{\infty}$ decays exactly as $c \cdot t^{-n/2}$ follows
easily from the explicit knowledge of the kernel of this group
\cite[p. 60]{ReedSimonII}.

\medskip

In this paper we derive analogous $L^{\infty}$-time decay estimates
for the Schr\"odinger equation on some connected graphs.

\medskip

Let us now introduce some notations which will be used throughout the
rest of the paper.

$\mathcal R$ is a connected graph made of edges $e_k$, $k\in I=I_B\cup I_U$, where
$I_B$, $I_U$ are finite sets of $\mathbb{N}$ (the set of positive integers) and $I_U$ is non empty.

$\mathcal{R}_B$ is the compact core of  $\mathcal R$, i.e., the subgraph of $\mathcal R$  made of the edges $e_k$, $k\in I_B$.

$\mathcal{R}_U=\mathcal{R}\setminus \mathcal{R}_B$  corresponds to the non-compact part of $\mathcal R$ that is made of a finite number of infinite star graphs, edges of $\mathcal{R}_U$ are called outer edges.

For all $k\in I_B$, $e_k$ has a finite length $\ell_k$, hence $e_k$ is identified with $(0,\ell_k).$

For all $k\in I_U$, $e_k$ has a infinite length, hence $e_k$ is identified with $(0,+\infty).$ 

$V$  is the set of vertices of $\mathcal R$.

For all $v\in V$, $I_v=$ set of edges having $v$ as vertex.

$V_{\rm ext}$  is the set of exterior vertices of $\mathcal R$ ($\# I_v=1$), while
$V_{\rm int}$  is the set of interior vertices of $\mathcal R$ ($\# I_v\geq 2$).

Finally let $V_{\rm connecting}$ be the set of connecting vertices of $\mathcal R$, namely the \ali{set} of $v\in V_{\rm int}$ such that $I_v\cap I_U\ne\emptyset$. Without loss of generality, we can assume that for any vertex $v \in V_{\rm connecting}$ \ali{we have} $\# I_v\geq 3$.

By convention, the vertices of $V_{\rm int}$  
remain interior vertices of   $\mathcal{R}_B$, hence the set of exterior vertices of $\mathcal{R}_B$
is $V_{\rm ext}$.

\begin{center} \label{fig1}
\includegraphics[scale=0.60]{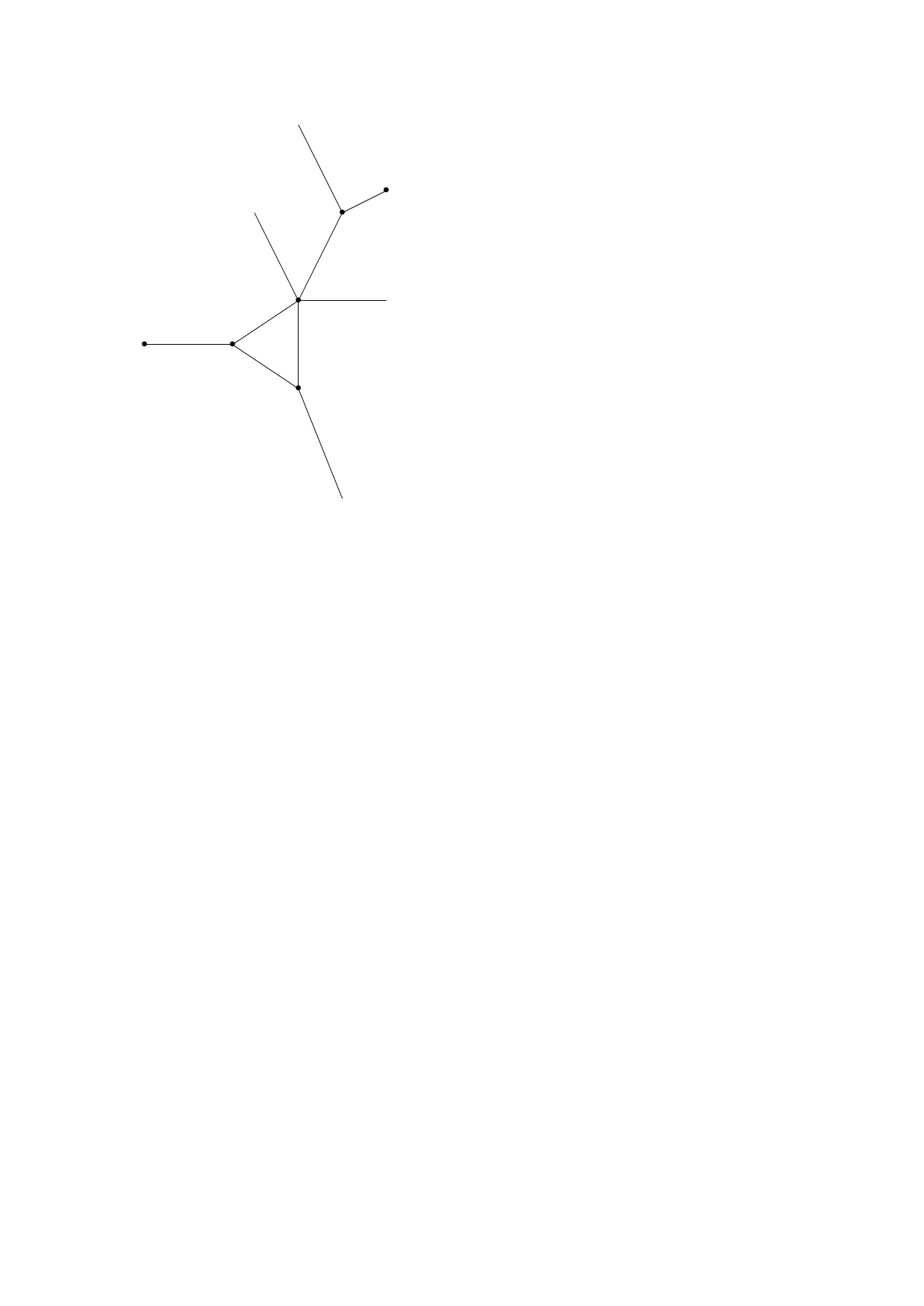}
 \put(-2,119){$v_1$}
 \put(-112,59){$v_2$}
 \put(-72,65){$v_3$}
 \put(-40,45){$v_4$}
 \put(-40,73){$v_5$}
  \put(-24,108){$v_6$}
\captionof{figure}{A graph with $\# I_B = 6$ and $\# I_U = 4$\label{exemplegraph}}
\end{center}

An example of a such a graph is presented in Figure \ref{exemplegraph}, \ali{the} vertices are represented by a dot, and   $V_{\rm ext}=\{v_1,v_2\}$,  
$V_{\rm int}=\{v_3,v_4,v_5,v_6\}$, $V_{\rm connecting}=\{v_4,v_5,v_6\}$. Consequently, $\mathcal{R}_B$ is made of   edges joining $v_1v_6$, $v_6v_5$, $v_5v_4$, $v_5v_3$, $v_3v_4$
and $v_3v_2$ (hence $\# I_B = 6$), and  \ali{$\mathcal{R}_U$} is made of four half-lines (one \ali{emanating} from $v_4$, two emanetating from $v_5$ and one \ali{emanating} from $v_6$).

Introduce ${\mathcal H}= L^2 (\mathcal{R}) =   \ds \ali{\prod_{k\in I}} L^2(e_k)$ the Hilbert space \ali{product} of the $L^2(e_k), k\in I$
with inner product
$$((u_k),(v_k))_{\mathcal H} =   \sum_{k\in I}
(u_k,v_k)_{L^2(e_k)}.$$

Let $H : {\mathcal D}(H) \subset {\mathcal H} \rightarrow {\mathcal H}$ be the linear operator on
${\mathcal H}$ defined by :
\beqs
{\mathcal D}(H) = \Big\{u=(u_k) \in C({\mathcal R})\cap \ds \ali{\prod_{k\in I}} H^2(e_k)
\hbox{ satisfying } 
\\
u(v)=0,  \forall  v \in \vext, \,
\sum_{k\in I_v} \frac{d u_k}{dn_k} (v)=0,  \forall  v \in \vint\Big\},
\eeqs
where, as usual, $\frac{d u_k}{dn_k} (v)$ means the unit outward normal derivative of $u_k$ at $v$, and
$$
H \ali{u} = \left(- \frac{d^2 u_k}{dx^2} \right)_{k\in I} = - \Delta_{{\mathcal R}}\ali{u}, \forall u\in {\mathcal D}(H).
$$
This operator $H$ is self-adjoint and
its spectrum $\sigma(H)$ is equal  to $[0,+\infty)$ due to Theorem 5.2 of \cite{Solomyak:03}.
The self-adjointness and non-negativity of $H$ can be shown by Friedrichs extension.

\sn{\begin{remark} {\rm
The results stated below remain valid if we replace the  Dirichlet boundary conditions at the exterior vertices
\[
u(v)=0,  \forall  v \in \vext,
\]
 by the
Neumann 
boundary conditions at the exterior vertices
\[
\frac{d u_k}{dn_k} (v)=0,  \forall k\in I_v, v \in \vext.
\]
Similarly mixed boundary conditions at the exterior vertices, i. e., Dirichlet ones at a subset of $\vext$ and Neumann ones at the remaining set, can 
be considered.
}
\end{remark} 
}

Let us also introduce the Laplace operator on  $\mathcal{R}_B$ with Dirichlet boundary conditions as follows:
\beqs
{\mathcal D}(\Delta_{\mathcal{R}_B}^{\rm Dir}) = \Big\{u=(u_k) \in C({\mathcal R}_B)\cap \ds \ali{\prod_{k\in I_B}} H^2(e_k)
\hbox{ satisfying } 
\\
u(v)=0,  \forall  v \in \vext,\\
\sum_{k\in I_v} \frac{du_k}{dn_k} (v)=0,  \forall  v \in \vint \Big\},
\eeqs
$$
-\Delta_{\mathcal{R}_B}^{\rm Dir} \ali{u} = \left(- \frac{d^2 u_k}{dx^2} \right)_{k\in I_B}.
$$
Again $-\Delta_{\mathcal{R}_B}^{\rm Dir}$ is a non-negative self-adjoint operator, but here  with a compact resolvent, hence its spectrum $\sigma(-\Delta_{\mathcal{R}_B}^{\rm Dir})$ is a discrete set.

Here, under some technical \ali{but crucial} assumptions  (described below), we prove that the free Schr\"odinger group $(e^{-itH})_{t\in \mathbb{R}}$ on the graph 
${\mathcal R}$ satisfies the standard $L^1-L^\infty$ dispersive
estimate. 
This means that, under these assumptions, the time decay rate is the same as in the case of a line  \cite{ReedSimonII,weder,GS},  a half-line  
\cite{wederb}, a star
shaped network  \cite{admi1,banica,banicab,ignat,amamnic} and a tadpole graph \cite{amamnicbis}. 

Note further that  Ammari \& Sabri have proved 
in \cite{amsab1}  dispersive estimates for the adjacency matrix on a discrete regular tree and the Schr\"odinger equation on a metric regular tree with uniform potential on its edges/vertices. The latter model is an extension of periodic Schr\"odinger operators on the real line. They have established a sharp decay rate in $t^{-3/2}$ for both models, providing the first-order asymptotics. Additionally, in another work \cite{amsab2}, they have demonstrated a sharp dispersive estimate \[\left\|e^{i t H}f\right\|_\infty \leq t^{-d/3} \left\|f\right\|_1,\]
for any Cartesian product of the integer lattice and a finite graph, denoted as $\mathbb{Z}^d \square G_F$. This result applies to various structures like infinite ladders, k-strips, and infinite cylinders, including those with specific potentials.
  
\medskip

The present paper is organized as follows. The kernel of the resolvent
needed for the proof of Theorems \ref{mainresult} \ali{- \ref{mainresultfrequency bandYB}}, is given in
section \ref{sec2}. 
The calculation of the resolvent is equivalent to the resolution of a linear system which describes the impact of the transmission conditions on the coefficients of the scattering matrix.
In the general setting of this section we assume that determinant of the matrix of this system factorizes in a factor whose zeros are the eigenvalues of the Hamilton operator and a nonzero factor together with hypotheses on the structure of the Cramer solution formula. These hypotheses 
\ali{(Assumption \ref{lassumptionserge})}
imply the existence of a
decomposition of the kernel of the
resolvent into a \sn{term with
poles on the positive real axis (\ali{which} in some particular cases is meromorphic in the whole complex plane with
poles on the positive real axis)}
and a term which is continuous on
the real line but \ali{possibly} discontinuous when crossing it.

%%%%%%%%%%%%%%%%%%%  new text %%%%%%%%%%%%%%%%%%
%
\ali{
In section \ref{sec2}  we further introduce the orthogonal decomposition of $L^2(\mathcal{R})$
into the span $Y_D$ of the eigenfunctions of $H$, the span $Y_k$
of the functions with support in $e_k, k \in I_U$ and the orthogonal complement $Y_B$ of the sum of the preceeding spaces. This makes sense because all eigenfunctions of $H$ have their support in $\mathcal{R}_B$ and are thus orthogonal to all $Y_k$.
In this context we suppose that the above mentioned Cramer solution formula for the resolvent of $H$ has only isolated poles of degree 1 (assumption \ref{lassumptionsergeYj}).
We may suppose also the absence of the term in the Cramer solution formula having these poles (assumption \ref{lassumptionsergeYB}). Thanks to this spectral- and support-based hybrid approach, we obtain different expressions for the kernel of the resolvent of $H$, valid when applied to elements of $Y_k$ (under the assumptions \ref{lassumptionserge} and \ref{lassumptionsergeYj}) and when applied to elements of $Y_B$ 
(under the assumptions 
\ref{lassumptionserge} ans \ref{lassumptionsergeYB}). These are the theorems \ref{res.idYj} and \ref{res.idYB}. As a corollary we obtain an expression for the resolvent valid when applied to all elements of $H$ under the hypotheses 
\ref{lassumptionserge},  
\ref{lassumptionsergeYj}
and 
\ref{lassumptionsergeYB}. 
Finally we see in theorem  
\ref{absence_sing_spec}
that under these last hypotheses we have the absence of a singular continuous spectrum. 
}%
\ali{
For an interpretation of this last phenomenon we look at the existing literature on the singular continuous spectrum, eg. \cite{pearson}. The author constructs Schr\"odinger equations with an infinity of potential bumps, which are distributed irregularly on the real line. This creates a spectrum which tends to be concentrated along Cantor-type sets when adding successively the bumps, leading to a singular continuous spectral measure. In our case we have only a finite number of finite edges, which seem to be unable to create a similar effect.
}%
\ali{
Finally we note that the pure point spectrum (created by the eigenfunctions supported on the finite subgraph) is immersed into the continuous spectrum (created by the scattering states which dissapear into the infinite edges) because, 
by arguments using the classical Lax-Milgram theory, the total spectrum of $H$ is equal to $[0,\infty[$. 
}%
%
%%%%%%%%%%%%%%%%%%   end new text  %%%%%%%%%%%%%
Note that in \cite{cacc,nojaetall:15} the eigenvalues and eigenvectors are given without this detailed analysis of the rest of the spectrum; there the authors further consider the bifurcation of solutions of the nonlinear  Schr\"{o}dinger equation from the eigenvectors of the linear operator.

\medskip

In section \ref{proofm} we give the proofs of the main results of the
paper (Theorems \ref{mainresult} \ali{- \ref{mainresultfrequency bandYB})}. To this end we replace the formula
of the resolvent into Stone's formula and prepare all terms for the
application of the Lemma of van der Corput. For this section we assume that the second factor of the matrix mentioned above is bounded away from zero. This hypthesis has an impact on the geometry of the metric graph as can be seen in section 5. It implies that the inverse of the second factor is a quasiperiodic function and can thus be developed in a trigonometric series thanks to a theorem of harmonical analysis.
This leads to a decomposition of the solution formula in terms which can be treated by the van der Corput Lemma without amplitude, implying a time decay-rate at least as $t^{-1/2}$.

\medskip

Section 4 is devoted to the analysis of the evolution of the local probability density in terms of the probability flow. The Schrödinger group is a group of isometries. This is linked to the interprétation of $|u|^2$ as the probability density of the position of the particle. Due to this isometry property, probability behaves as an incompressible fluid, which leads to the possibility to define a probability flow. Using this idea, we prove that the rate of change
of the probability of the position of the particle to be in the sub-graph $\mathcal{R}_B$ made of the bounded edges  is equal to $i$ times the flow from $\mathcal{R}_B$ to its complement $\mathcal{R}_U$. This type of equation is in general called continuity equation
(for the basic concept see \cite{Likharev}, section 1.4).
     It allows us to prove the boundedness of the  rate of change of the probability in $\mathcal{R}_B$. For the proof we use the fact that the values of the flow at the contact vertices between $\mathcal{R}_B$ and $\mathcal{R}_U$ can be controlled by the $H^2-$norm of the solution on $\mathcal{R}_U$, using the Sobolev embedding $H^2(e_k) \subset C^1(\overline{e_k})$. 
\\ 
Using similar arguments we prove that the rate of change of the probability in $\mathcal{R}_B$ decays at least as $t^{-1}$

\medskip

 In the last section we consider several special metric graphs with semi-infinite ends. These examples show that our hypotheses to obtain the 
time decay-rate at least as $t^{-1/2}$
include the possibility of circles (made of one or several edges). 
In the study of the Y-graph and the tadpole with two heads we show that our hypotheses for time-decay require that the two bounded edges have commensurable lengths.
\ali{
Especially we use that in the non-commensurable case there exists a sequence of rational numbers $\frac{p}{q}$ approaching the ratio of the lengths of the two bounded edges at least as rapidly as const$\cdot \frac{1}{q}$ tends to zero (\cite{Hartman-49}). This implies that for a sequence of frequencies $z$ related to the sequence of the $q$'s, the kernel $K(x,y,z)$ of the resolvent 
tends to infinity while it does not have any singularities for finite $z'$s. As a related consequence (\ref{lowerbddc}) is not satisfied and we are thus not able to prove the decay at least as 
c$\cdot t^{-\frac{1}{2}}$ for the free Schrödinger group
($L^{1}$ to $L^{\infty}$).
For an interpretation one might think that the above mentioned asymptotic behavior of the kernel of the resolvent for large frequencies prevents the existence of a factor $c$ which is valid for all frequency bands when using stationary phase-type arguments. Intuitively the irrationality of the ratio of lengths of the edges makes it more difficult that signals turning round in the finite graph cancel out during evolution.
}
For the tadpole with two heads we find nice \ali{expressions} for the resolvent \ali{and} for the solution on the unbounded edge, which are valid regardless the ratio of the lengths of the two bounded edges.  
Note that the solution formula includes all frequencies up to infinity, a result which is inaccessible in the general context of sections 2 and 3.

\section{Kernel of the resolvent} \label{sec2}

Given $z\in \CC_+:=\{z\in \CC: \Im z>0\}$ and $f\in L^2({\mathcal R})$, we are looking 
for $u\in \mathcal{D}(H)$ solution of
\be\label{systemresolvante}
\Delta_{{\mathcal R}} u+z^2 u=f \hbox{ in } {\mathcal R}.
\ee
%For shortness, let us denote by $\omega=-iz$.

Hence as in \cite[§ 3.2]{banica} we look for $u$ in the form
\beq\label{sergeu1}
u_k(x)&=& c_k\sn{(z,f)} e^{-izx}+d_k\sn{(z,f)}  e^{izx}
\\
&+&  \frac{1}{2iz}\int_0^{\ell_k} e^{iz |x-y|} f_k(y) \,dy, \forall x\in (0, \ell_k), k\in I_B,
\nonumber
\\
\label{sergeu2}
u_k(x)&=& d_k\sn{(z,f)}  e^{izx}\\
&+&  \frac{1}{2iz}\int_0^{\infty} e^{iz |x-y|} f_k(y) \,dy, \forall x\in (0, +\infty), k\in I_U,
\nonumber
\eeq
where $f_k$ is the restriction of $f$ to the edge $e_k$; $c_k\sn{(z,f)}$, $d_k\sn{(z,f)}$  are constants (that depend on $z$ and $f$) fixed
in order to satisfy the transmission conditions. 
Indeed from these expansions, we clearly see that
for any  $k\in I$, one has:
\[
u_k''+z^2 u_k= f_k \hbox{ in } e_k.
\]
Now we see that the  transmission conditions are equivalent to
\be \label{systserge}
\mathcal{D}(z) ((c_k,d_k)_{k\in I_B}, (d_k)_{k\in I_U})^\top=b(z, f),
\ee
where $\mathcal{D}(z)$ is a square matrix of size $N=2\# I_B+\# I_U$
whose entries are linear combinations of 1, and $e^{\pm iz \ell_k}$ (with $k\in I_B$) 
%\ali{and $e^{\pm i z}$} 
with coefficients independent of $z$, \sn{more precisely 
the entries of a given column of the matrix $\mathcal{D}(z)$ are
either 0, or $\pm 1$, or $\pm e^{\pm iz\ell_k}$ or 
$\pm 1 \pm e^{\pm iz\ell_k}$, for some $k \in I_B$. Finally,
the right-hand side $b(z, f)=R F(z,f)$, where $R$ is $N\times N$ matrix whose entries are either $0$, or $\pm 1$ and $F(z,f):=\left((F_{k+}(z,f), F_{k-}(z,f))_{k\in I_B}, (F_{k+}(z,f))_{k\in I_U}\right)^\top$  
 is a column of size $N$ whose entries are given 
 \[
F_{k+}(z,f)= \frac{1}{2iz}\int_0^{\ell_k} e^{iz y} f_k(y) \,dy, \quad F_{k-}(z,f)=
\frac{1}{2iz}\int_0^{\ell_k} e^{iz (\ell_k-y)} f_k(y) \,dy, \forall k\in I_B,
 \]
 and 
 \[
 F_{k+}(z,f)=\frac{1}{2iz}\int_0^{\infty} e^{iz y} f_k(y) \,dy, \forall k\in I_U.
 \]}
Hence a solution to \rfb{systserge} exists if and only if
 the determinant of $\mathcal{D}(z)$ is different from zero. According to Theorem 3.2 of \cite{ExnerLipovsky-07}, the zeroes of $\mathcal{D}(z)$ correspond to the scattering resonances of 
 the graph (see \cite{BK,Colin-Truc:16}), which are situated in $\Im z\leq 0$. Nevertheless, let us analyze this determinant,  for any $z\in \CC_+\cup \RR^*$, with  $\RR^*=\RR\setminus\{0\}$.  For that purpose, we introduce a subset $S_d$ of $\sigma(-\Delta_{\mathcal{R}_B}^{\rm Dir})$ defined as the set of $\lambda\in \sigma(-\Delta_{\mathcal{R}_B}^{\rm Dir})$ such that there exists an eigenvector $\varphi\in D(\Delta_{\mathcal{R}_B}^{\rm Dir})$ \ali{associated with $\lambda$} such that
 \begin{equation}\label{eq:serge:10/09:1}
 \varphi(v)=0, \forall v\in V_{\rm connecting}.
 \end{equation}
 As $V_{\rm connecting}$ is not empty, we deduce  that 
 $0$ cannot be in $S_d$.

  For future uses, if $S_d$ is non empty, we denote by $S_d:=\{\lambda_m^2\}_{m\in  \mathbb{N}}$, with $\lambda_m>0$, where $\lambda_m^2$ is repeated according to its multiplicity (corresponding to the number of associated independent eigenvectors $\varphi$ satisfying \eqref{eq:serge:10/09:1}). For all $m$, we then denote by 
  $\varphi^{(m)}$ the   eigenvector of $-\Delta_{\mathcal{R}_B}^{\rm Dir}$ associated with $\lambda_m^2$ (and satisfying \eqref{eq:serge:10/09:1}).
 \sn{Note that the extension of $\varphi^{(m)}$ by zero on $\mathcal{R}_U$, still denoted by  $\varphi^{(m)}$ for simplicity, belongs to $D(H)$
 and is an eigenvector of $H$ of eigenvalue $\lambda_m^2$. We will see below that this yields all the eigenvectors of $H$.}
  
 \begin{lemma}
 \label{lspectrediscret}
 It holds
 $\det \mathcal{D}(z)=0$ with $z\in \CC_+\cup \RR^*$ if and only if
 $z^2$ belongs to   $S_d$.
 \end{lemma}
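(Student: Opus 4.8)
The plan is to reduce the vanishing of $\det\mathcal{D}(z)$ to the existence of a nontrivial solution of the homogeneous transmission problem, and then to identify such solutions with the eigenfunctions counted by $S_d$. Since $\mathcal{D}(z)$ is exactly the matrix of the linear system \rfb{systserge} expressing the transmission conditions, $\det\mathcal{D}(z)=0$ holds if and only if the homogeneous system $\mathcal{D}(z)X=0$ admits a nontrivial vector $X=((c_k,d_k)_{k\in I_B},(d_k)_{k\in I_U})^\top$. To such an $X$ I associate the function $u$ given by \rfb{sergeu1}--\rfb{sergeu2} with $f=0$, i.e. $u_k=c_k e^{-izx}+d_k e^{izx}$ on $e_k$, $k\in I_B$, and $u_k=d_k e^{izx}$ on $e_k$, $k\in I_U$. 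Because $z\ne 0$, the exponentials $e^{\pm izx}$ are linearly independent, so this correspondence is a bijection between null vectors of $\mathcal{D}(z)$ and nonzero functions $u$ solving $-\Delta_{\mathcal{R}}u=z^2u$ and satisfying all the transmission conditions. It thus suffices to prove that such a nonzero $u$ exists precisely when $z^2\in S_d$.

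For the converse implication, assume $z^2\in S_d$, say $z^2=\lambda_m^2$ with eigenvector $\varphi^{(m)}$ of $-\Delta_{\mathcal{R}_B}^{\rm Dir}$ satisfying $\varphi^{(m)}(v)=0$ for all $v\in V_{\rm connecting}$. As already observed, its extension by $0$ on $\mathcal{R}_U$ lies in $D(H)$ and solves $H\varphi^{(m)}=z^2\varphi^{(m)}$; expanding $\varphi^{(m)}$ on each bounded edge in the basis $e^{\pm izx}$ (possible since $z\ne 0$) and setting $d_k=0$ on every outer edge produces a nontrivial null vector $X$ of $\mathcal{D}(z)$, whence $\det\mathcal{D}(z)=0$. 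Note that the hypothesis $\varphi^{(m)}(v)=0$ at connecting vertices is exactly what makes the extension by zero continuous, so it is genuinely $S_d$, and not all of $\sigma(-\Delta_{\mathcal{R}_B}^{\rm Dir})$, that is relevant.

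For the direct implication, suppose $\det\mathcal{D}(z)=0$ and let $u\ne 0$ be the associated homogeneous solution. If $z\in\CC_+$, then on each outer edge $u_k=d_k e^{izx}$ decays exponentially, so $u\in L^2(\mathcal{R})\cap D(H)$ is an eigenfunction of $H$ of eigenvalue $z^2$; but $z\in\CC_+$ forces $z^2\notin[0,+\infty)=\sigma(H)$, contradicting the self-adjointness of $H$. Hence $\det\mathcal{D}(z)\ne 0$ for $z\in\CC_+$; since $S_d\subset(0,+\infty)$ gives $z^2\notin S_d$ for such $z$, the asserted equivalence holds trivially on $\CC_+$. It remains to treat $z\in\RR^*$. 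Here I would apply Green's (Lagrange) identity on the graph truncated at abscissa $L$ on each outer edge: since $z$ is real, $\bar u_k u_k''-u_k\bar u_k''=-z^2(\bar u_k u_k-u_k\bar u_k)=0$ on every edge, while the continuity and Kirchhoff conditions make all interior-vertex contributions $\sum_{k\in I_v}\big(\bar u_k\frac{du_k}{dn_k}-u_k\frac{d\bar u_k}{dn_k}\big)(v)$ cancel and the Dirichlet condition kills the exterior ones. Thus the identity reduces to $0=\sum_{k\in I_U}(\bar u_k u_k'-u_k\bar u_k')(L)$, and a direct computation using $|e^{izL}|=1$ evaluates the right-hand side as $2iz\sum_{k\in I_U}|d_k|^2$, independently of $L$. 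Therefore $\sum_{k\in I_U}|d_k|^2=0$, i.e. $u\equiv 0$ on $\mathcal{R}_U$; continuity at the connecting vertices then gives $u(v)=0$ for all $v\in V_{\rm connecting}$, so $u|_{\mathcal{R}_B}$ is a nonzero element of $D(\Delta_{\mathcal{R}_B}^{\rm Dir})$ with $-\Delta_{\mathcal{R}_B}^{\rm Dir}(u|_{\mathcal{R}_B})=z^2\,u|_{\mathcal{R}_B}$ satisfying \eqref{eq:serge:10/09:1}, that is, $z^2\in S_d$.

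The main obstacle is this last step for real $z$: one cannot invoke self-adjointness, since the candidate $u$ is only a bounded, non-$L^2$ scattering-type solution carrying purely outgoing waves on the ends. The flux identity is what forces the outgoing amplitudes $d_k$, $k\in I_U$, to vanish, and the crucial feature is that for real $z$ the surviving truncation term is exactly $L$-independent (because $|e^{izL}|=1$), so the argument closes without any limiting procedure. Once the ends carry no wave, continuity at the connecting vertices automatically yields the vanishing condition \eqref{eq:serge:10/09:1}, matching precisely the definition of $S_d$ and dovetailing with the construction used in the converse implication.
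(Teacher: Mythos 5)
Your proof is correct, and for the heart of the lemma --- real $z$ --- it follows the same strategy as the paper: reduce $\det\mathcal{D}(z)=0$ to the existence of a nontrivial outgoing solution of the homogeneous transmission problem, kill the amplitudes $d_k$, $k\in I_U$, by a flux identity on the truncated graph, and then read off that the surviving function is a Dirichlet eigenfunction on $\mathcal{R}_B$ vanishing at the connecting vertices, i.e.\ $z^2\in S_d$. You diverge from the paper in two inessential but worthwhile ways. First, for $z\in\CC_+$ you dispose of the question abstractly: the homogeneous solution decays on the ends, hence lies in $D(H)$, and $z^2\notin[0,\infty)=\sigma(H)$ rules it out by self-adjointness; the paper instead writes $z=a+ib$ and extracts the conclusion from the real and imaginary parts of the explicit energy identity. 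Your route is shorter but leans on the quoted fact $\sigma(H)=[0,+\infty)$, whereas the paper's computation is self-contained. Second, on the real axis you use the antisymmetric Green/Wronskian identity $\sum_k\int(\bar u_k u_k''-u_k\bar u_k'')$, whose truncation term $2iz\sum_{k\in I_U}|d_k|^2$ is exactly independent of the truncation length, while the paper works with the sesquilinear form $\sum_k\int(u_k''+z^2u_k)\bar u_k$ and takes the imaginary part to reach the same conclusion $\sum_{k\in I_U}|d_k|^2=0$. Both give the same result with comparable effort; your version avoids the case analysis on $a$ and $b$. The converse direction (extension of an $S_d$-eigenvector by zero yields a nontrivial null vector of $\mathcal{D}(z)$) is the same in both.
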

 \begin{proof}
\ali{We have},  $\det \mathcal{D}(z)=0$ if and only if the homogeneous system associated with
 \eqref{systserge} has a non-trivial solution, hence if and only if 
 a non-trivial solution $u$ of \eqref{systemresolvante} in the form \eqref{sergeu1}-\eqref{sergeu2} with $f=0$ exists.
 In this case, as $u$ satisfies
\[
u_k''+z^2 u_k= 0 \hbox{ in } e_k, \forall k\in I,
\]
by multiplying this identity by $\bar u_k$,  integration by troncating the infinite edges up to a positive real number $R$, we have
 \[
\sum_{k\in I_B} \int_{e_k} (u_k''+z^2 u_k)\bar u_k\,dx+\sum_{k\in I_U} \int_{0}^R (u_k''+z^2 u_k)\bar u_k\,dx=0.
\]
Hence by integration by parts and using the transmission conditions, we find that
\[
\sum_{k\in I_B} \int_{e_k} (|u'_k|^2-z^2 |u_k|^2)\,dx+\sum_{k\in I_U} \int_{0}^R (|u_k'|^2-z^2 |u_k|^2)\,dx
-\sum_{k\in I_U} u_k'(R)  \bar u_k(R) =0.
\]
But due to  \eqref{sergeu2} with $f=0$ we have
\[
u'_k=iz u_k, \forall k\in I_U,
\]
and therefore the previous identity becomes
\[
\sum_{k\in I_B} \int_{e_k} (|u'_k|^2-z^2 |u_k|^2)\,dx+\sum_{k\in I_U} \int_{0}^R (|z|^2-z^2) |u_k|^2\,dx
-iz\sum_{k\in I_U} |u_k(R)|^2 =0.
\]
Writting $z=a+ib$ with $b\geq 0$, we get equivalently
 \beq\label{serge24/05:1}
 \sum_{k\in I_B} \int_{e_k} (|u'_k|^2+(b^2-a^2-2iab) |u_k|^2)\,dx
 \\
 +2(b^2-iab)\sum_{k\in I_U} |d_k|^2 \int_{0}^R   e^{-2bx}\,dx
-i(a+ib)\sum_{k\in I_U} |d_k|^2  e^{-2bR} =0.\nonumber
 \eeq
 If $a=0$, then taking the real part of this identity, we get
 \beqs
 \sum_{k\in I_B} \int_{e_k} (|u'_k|^2+b^2 |u_k|^2)\,dx
 \\
 +2b^2\sum_{k\in I_U} |d_k|^2 \int_{0}^R   e^{-2bx}\,dx
+b\sum_{k\in I_U} |d_k|^2  e^{-2bR} =0.\nonumber
 \eeqs
 And since $b>0$, we deduce that $u$ is zero, which is impossible.
 
If $a\ne 0$,  by taking the imaginary part of the identity \eqref{serge24/05:1}, we find 
 \be\label{serge24/05:3}
 2b  \sum_{k\in I_B} \int_{e_k}  |u_k|^2\,dx
 +2b\sum_{k\in I_U} |d_k|^2 \int_{0}^R   e^{-2by}\,dx
+e^{-2bR}  \sum_{k\in I_U} |d_k|^2  =0.
\ee
We now distinguish the case $b>0$ from the case $b=0$.
 \\
 1. If $b>0$, then by direct calculation of the second term of   the previous identity we get equivalently
 \[
 2b  \sum_{k\in I_B} \int_{e_k}  |u_k|^2\,dx
 +\sum_{k\in I_U} |d_k|^2=0.
 \]
 Consequently $d_k=0$ for all $k\in I_U$
 and 
 \[
 u_k=0 \hbox{ on } e_k, \forall k\in I_B,
 \]
 which directly implies that
$c_k=d_k=0$ for all $k\in I_B$. Hence $\det \mathcal{D}(z)$  cannot be equal to zero.\\
 2. If $b=0$, the  identity \eqref{serge24/05:3} directly implies that
 \[
 \sum_{k\in I_U} |d_k|^2 =0,
 \]
 or equivalently
 \be
\label{serge24/05:2}
 d_k=0, \forall k\in I_U.
 \ee
 This means that $u$ is identically equal to zero on the outer edges of the graph,
 and hence $z^2=a^2$ is an eigenvalue of the Laplace operator on the graph $\mathcal{R}_B$ 
 with Dirichlet boundary conditions at all exterior vertices $v\in V_{\rm ext}$ with eigenvector $u$.
In other words, $z^2$ belongs to the set of eigenvalues $\sigma(-\Delta_{\mathcal{R}_B}^{\rm Dir})$.
 Since additionally, $u=0$ on the outer edges of the graph,
$u$ is zero at  the vertices $v\in I_{\rm connecting}$, and therefore $z^2$ belongs to $S_d$. 
 \end{proof}

 Lemma \ref{lspectrediscret} suggests that the point spectrum corresponds to $S_d$,
 this is indeed the case.

%is distributed in the whole
%continuous spectrum. This is indeed the case has the next result shows.

 \begin{corollary} \label{lemma1}
 With $S_d$ introduced before, it holds
 \[
 \sigma_{pp}(H)=S_d.
 \]
\end{corollary}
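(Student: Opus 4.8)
The plan is to prove the two inclusions separately, invoking Lemma~\ref{lspectrediscret} only for the less elementary direction.

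For $S_d\subseteq\sigma_{pp}(H)$ I would make rigorous the observation already recorded before Lemma~\ref{lspectrediscret}. If $\lambda_m^2\in S_d$ with eigenvector $\varphi^{(m)}\in D(\Delta_{\mathcal{R}_B}^{\rm Dir})$ satisfying \eqref{eq:serge:10/09:1}, I would extend $\varphi^{(m)}$ by $0$ on $\mathcal{R}_U$. The extension lies in $C(\mathcal{R})$ precisely because $\varphi^{(m)}(v)=0$ at each connecting vertex $v$ matches the zero value carried by the adjacent outer edges; it lies in $\prod_{k\in I}H^2(e_k)$, satisfies the Dirichlet conditions at $\vext$, and the Kirchhoff condition $\sum_{k\in I_v}\frac{d\varphi^{(m)}_k}{dn_k}(v)=0$ at every $v\in\vint$ is unchanged, since the newly added outer-edge summands vanish. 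Hence the extension belongs to $\mathcal{D}(H)$ and is an eigenvector of $H$ of eigenvalue $\lambda_m^2$, so $\lambda_m^2\in\sigma_{pp}(H)$.

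For the reverse inclusion let $\mu\in\sigma_{pp}(H)$ with eigenvector $u\neq0$; since $H\ge0$ we have $\mu\ge0$, and I write $\mu=z^2$ with $z\ge0$ real. The decisive step, which I expect to be the crux of the argument, is that $u$ must vanish identically on $\mathcal{R}_U$: on each outer edge $u_k''+z^2u_k=0$, whose general solution is $A_k e^{izx}+B_k e^{-izx}$ (affine when $z=0$), and for real $z$ none of these functions lies in $L^2(0,+\infty)$ unless $A_k=B_k=0$. This is the only place where the reality of $\mu$ (equivalently the self-adjointness and non-negativity of $H$) is used, and it is what confines every eigenfunction to the compact core $\mathcal{R}_B$.

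Once $u\equiv0$ on $\mathcal{R}_U$, the case $\mu=0$ is excluded: then $u$ is affine, hence constant, on each edge, so continuous and locally constant on the connected graph $\mathcal{R}$, thus globally constant, and since it vanishes on the non-empty outer edges it is identically $0$, contradicting $u\neq0$; this matches the fact that $0\notin S_d$. For $\mu=z^2>0$ (so $z\in\RR^*$) the restriction $u|_{\mathcal{R}_B}\neq0$ belongs to $D(\Delta_{\mathcal{R}_B}^{\rm Dir})$: the Dirichlet conditions at $\vext$ are inherited, and at each interior vertex the Kirchhoff condition reduces to the one over $I_v\cap I_B$ because the outer-edge derivatives vanish; moreover $-\Delta_{\mathcal{R}_B}^{\rm Dir}u|_{\mathcal{R}_B}=z^2u|_{\mathcal{R}_B}$. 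Continuity of $u$ across the vanishing outer edges forces $u(v)=0$ at every $v\in V_{\rm connecting}$, i.e. \eqref{eq:serge:10/09:1} holds, so $z^2\in S_d$. Equivalently, $u$ provides a non-trivial solution of the homogeneous system \eqref{systserge} (with $f=0$ and $d_k=0$ for $k\in I_U$), whence $\det\mathcal{D}(z)=0$ and Lemma~\ref{lspectrediscret} yields $z^2\in S_d$ at once. The main obstacle is thus not any single hard estimate but the careful bookkeeping of the transmission structure when passing to the restriction; combining the two inclusions gives $\sigma_{pp}(H)=S_d$.
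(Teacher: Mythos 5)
Your argument follows essentially the same route as the paper's: the inclusion $S_d\subseteq\sigma_{pp}(H)$ by extending $\varphi^{(m)}$ by zero (which the paper records in the remark just before Lemma~\ref{lspectrediscret}), and the reverse inclusion by showing that the $L^2$ condition on the outer edges forces the eigenfunction to vanish on $\mathcal{R}_U$, after which its restriction to $\mathcal{R}_B$ is a Dirichlet eigenfunction vanishing at $V_{\rm connecting}$, i.e.\ $\mu\in S_d$. Your checks of continuity, $H^2$-regularity and the Kirchhoff condition for the zero extension are exactly what is needed, and your assertion that a nontrivial combination $A_ke^{izx}+B_ke^{-izx}$ with $z\in\RR^*$ is never in $L^2(0,\infty)$ is correct (the paper proves it quantitatively via $\int_0^R|c_ke^{-izx}+d_ke^{izx}|^2\,dx\geq (R-\tfrac1z)(|c_k|^2+|d_k|^2)$, but the fact itself is standard).

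There is, however, one step that does not hold as written: in the $\mu=0$ case you say that $u$ is ``affine, hence constant, on each edge.'' An affine function on a bounded edge is of course not constant in general, so this link in the chain is unjustified; without it you cannot conclude that $u$ is locally constant on $\mathcal{R}$. The repair is short and is what the paper does: from $u_k''=0$ on every edge, integrate $\sum_{k\in I}\int_{e_k}u_k''\,\bar u_k\,dx=0$ by parts, use the Kirchhoff and Dirichlet conditions (and the already-established vanishing of $u$ on $\mathcal{R}_U$, or the decay needed to kill the boundary term at infinity) to get $\sum_{k\in I}\int_{e_k}|u_k'|^2\,dx=0$, hence $u_k'\equiv0$ and $u$ is constant on each edge; continuity and connectedness together with $u\equiv0$ on the non-empty $\mathcal{R}_U$ then give $u\equiv0$, so $0\notin\sigma_{pp}(H)$, consistent with $0\notin S_d$. (Alternatively a discrete maximum-principle argument at a vertex where $u$ is maximal would work, but some such argument must be supplied.) With that one line inserted, your proof is complete and coincides in substance with the paper's.
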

\begin{proof}
From the previous Lemma, any element $z^2$ of $S_d$ belongs to $\sigma_{pp}(H)$.
On the contrary let  $\lambda> 0$ be an eigenvalue of $H$ of eigenvector
$u$, then
for any $k\in I$,
$u_k\in L^2(e_k)$ is \ali{of} the form
\[
u_k(x)=c_k e^{-izx}+d_k  e^{izx}, \forall x\in e_k,
\]
with  $z=\sqrt{\lambda}$ and $c_k, d_k$ are complex number fixed to satisfy
the transmission conditions. From the $L^2$ property of $u_k, k\in I_U$, we deduce that
\be\label{serge24/05:4}
c_k=d_k=0, \forall k\in I_U.
\ee
Indeed for any $R>0$, direct calculations yield
\[
\int_0^R |c_k e^{-izx}+d_k  e^{izx}|^2\,dx=  R(|c_k|^2+|d_k|^2)+2\Re \left(c_k\bar d_k \frac{1}{2iz}  (e^{2izR}-1) \right).
\]
Hence by Cauchy-Schwarz's and Young's  inequalities, we get
\[
\int_0^R |c_k e^{-izx}+d_k  e^{izx}|^2\,dx\geq  \left(R-\frac{1}{z} \right) \left(|c_k|^2+|d_k|^2 \right).
\]
Letting $R$ goes to infinity, we find that \eqref{serge24/05:4} holds, since the left-hand side of this estimate is uniformly bounded in $R$.
This implies that $u$ is identically equal to zero on the infinite edges and the arguments of the proof of  Lemma \ref{lspectrediscret}
imply that $\lambda$ belongs to $S_d$.

In the case $\lambda=0$, $u$ satisfying
\[
u_k''=0 \hbox{ on } e_k, \forall k\in I_U,
\]
we get
 \[
\sum_{k\in I} \int_{e_k} u_k'' \bar u_k\,dx=0,
\]
and by integration by parts and using the transmission conditions, we arrive at
\[
\sum_{k\in I} \int_{e_k} |u_k'|^2\,dx=0,
\]
hence $u$ is constant on each edge. By the $L^2$ property of $u$, we then deduce that
\[
u_k=0 \hbox{ on } e_k, \forall k\in I_U,
\]
and by the continuity property, we conclude that $u$ is zero
and therefore $0$ is not an eigenvalue of $H$.
\end{proof}

This result is in accordance with Corollary 2.1 of \cite{Colin-Truc:16}.

The previous results  also allow to deduce the kernel of the resolvent.

\begin{theorem} \label{theo1}
Let   $f \in \mathcal H$. Then, for $x \in {\mathcal R}$ and $z\in \CC$ such that
$\Im z> 0$, recalling that $R(z^2, H)=(z^2\mathbb{I}-H)^{-1}$, we have
\be\label{resolvantformula} [R(z^2, H)f](x)= \int_{{\mathcal R}} K(x, x', z^2) f(x') \; dx',
\ee
where, by setting $d(z)=\det \mathcal{D}(z)$,
the kernel $K$ is defined as follows:
\begin{equation}
\label{kernel11}  K(x,y,z^2) =
    \frac{1}{2iz}  \left(e^{iz|x-y|}+ \frac{1}{d(z)} \sum_{\sigma=\pm,\, \tau=\pm}   c_{k,\sigma, \tau}(z) e^{iz (\sigma x+\tau y)}
\right),   \forall x, y \in  e_k, k\in I_B, 
\end{equation}    
\begin{equation}
\label{kernel11b}  \sn{K(x,y,z^2) =
    \frac{1}{2izd(z)}   \sum_{\sigma=\pm,\, \tau=\pm}   c_{k,k',\sigma, \tau}(z) e^{iz (\sigma x+\tau y)},   \forall x \in  e_k, y\in e_{k'}, k, k'\in I_B, k\ne k',}
\end{equation}    
\begin{equation}
K(x,y,z^2) = \frac{1}{2iz d(z)} \sum_{\sigma=\pm}   c_{k,k',\sigma}(z) e^{iz (\sigma x+y)} ,  \forall x\in e_k, y\in  e_{k'},  k\in I_B,  k' \in I_U,
\label{kernel12}      
\end{equation}
\begin{equation}
 K(x,y,z^2) = \frac{1}{2iz} \left(e^{iz|x-y|}+ \frac{c_{k}(z)}{d(z)}     e^{iz (x+y)}\right),  \forall x, y \in  e_k, k\in I_U,
\label{kernel22}
 \end{equation}
 \sn{\begin{equation}
K(x,y,z^2) = \frac{1}{2iz d(z)} \sum_{\tau=\pm}  c_{k,k'}(z) e^{iz (x+y)},  \forall x\in  e_k, y\in e_{k'},  k, {k'} \in I_U, k\ne k',
\label{kernel22b}      
\end{equation}}
\begin{equation}
K(x,y,z^2) = \frac{1}{2iz d(z)} \sum_{\tau=\pm}  c_{k,k',\tau}(z) e^{iz (x+\tau y)} ,  \forall x\in  e_k, y\in e_{k'},  k \in I_U,   k'\in I_B,
\label{kernel21}      
\end{equation}
where the functions $c_{k,\sigma, \tau}, \sn{c_{k,k',\sigma, \tau}}, c_{k,k',\sigma},  c_{k}, \sn{c_{k,k'}}$ and $c_{k,k',\tau}$ are \sn{holomorphic functions that are
 polynomials on the variables  $e^{\pm iz \ell_j}$, $j\in I_B$} with coefficients independent of $z$ but that may depend on $k, k', \sigma$ and $\tau$.
\end{theorem}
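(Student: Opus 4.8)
The plan is to solve the transmission problem \eqref{systemresolvante} explicitly via the ansatz \eqref{sergeu1}--\eqref{sergeu2}, determine the free constants $c_k,d_k$ as functions of $z$ and $f$, and then reorganize the resulting formula for $u$ as an integral operator acting on $f$. First I would note that for $z\in\CC$ with $\Im z>0$ one has $z^2\notin[0,\infty)$, so $z^2\in\rho(H)$ and there is a \emph{unique} $u\in\mathcal D(H)$ solving \eqref{systemresolvante}; the ansatz indeed lands in $\mathcal D(H)$ because on an outer edge \eqref{sergeu2} contains only the decaying exponential $d_ke^{izx}$ (and the transmission conditions are imposed through \eqref{systserge}). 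Since $S_d\subset\sigma(-\Delta^{\rm Dir}_{\mathcal R_B})\subset(0,\infty)$ while $z^2\notin(0,\infty)$ for $\Im z>0$, Lemma \ref{lspectrediscret} gives $d(z)=\det\mathcal D(z)\neq0$, so \eqref{systserge} has a unique solution, which I would write out by Cramer's rule: each unknown equals $d(z)^{-1}$ times a linear combination of the entries of $b(z,f)=RF(z,f)$, the scalar weights being cofactors of $\mathcal D(z)$.

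Next I would exploit the stated structure of $\mathcal D(z)$ and of $F(z,f)$. Because every entry of $\mathcal D(z)$ lies in $\{0,\pm1,\pm e^{\pm iz\ell_k},\pm1\pm e^{\pm iz\ell_k}\}$, each cofactor is a finite $\ZZ$-linear combination of monomials $\prod_{j\in I_B}e^{in_jz\ell_j}$, hence a holomorphic function of $z$ that is a polynomial in the variables $e^{\pm iz\ell_j}$. Each entry of $b(z,f)$ is an integer combination of the integrals $F_{j\pm}(z,f)$, every one of which has the form $\tfrac1{2iz}\int_{e_j}e^{iz\varphi(y)}f_j(y)\,dy$ with $\varphi(y)\in\{y,\ell_j-y\}$, the extra factor $e^{iz\ell_j}$ arising from the phase $\ell_j-y$ being absorbed into the polynomial weight. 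Consequently every constant $c_k,d_k$ is $d(z)^{-1}$ times a finite sum of terms $(\text{polynomial in }e^{\pm iz\ell_j})\times\tfrac1{2iz}\int_{e_j}e^{\pm izy}f_j(y)\,dy$.

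Finally I would substitute these expressions back into \eqref{sergeu1}--\eqref{sergeu2}, interchange the finite sums with the integrals by Fubini (legitimate since $\Im z>0$ renders all exponentials integrable against $f\in L^2$), and read off the kernel. Fixing $x\in e_k$ and grouping contributions according to the edge $e_{k'}$ carrying $y$ produces the six cases: the terms $c_ke^{-izx}$ and $d_ke^{izx}$ (only $d_ke^{izx}$ when $x$ is on an outer edge) generate the exponentials $e^{iz(\sigma x+\tau y)}$, where $\sigma$ ranges over $\{+,-\}$ when $x$ is on a bounded edge and only $\sigma=+$ when $x$ is on an outer edge, and $\tau$ is governed analogously by whether $y$ lies on a bounded or an outer edge; the Green-type term $\tfrac1{2iz}\int_{e_k}e^{iz|x-y|}f_k$ contributes the summand $e^{iz|x-y|}$ precisely in the two diagonal cases $x,y\in e_k$. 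Collecting the polynomial weights into $c_{k,\sigma,\tau}$, $c_{k,k',\sigma,\tau}$, $c_{k,k',\sigma}$, $c_k$, $c_{k,k'}$ and $c_{k,k',\tau}$ yields exactly \eqref{kernel11}--\eqref{kernel21}, while the holomorphy and polynomial structure of these functions is inherited directly from the cofactors.

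The main obstacle is purely the bookkeeping of this last step: one must verify across all six position cases which pairs $(\sigma,\tau)$ actually occur, and check that the absorbed phase factors $e^{iz\ell_j}$ keep every coefficient a genuine polynomial in $e^{\pm iz\ell_j}$ rather than introducing spurious $z$-dependence or poles other than those carried by $d(z)^{-1}$. Once the invertibility of $\mathcal D(z)$ and the Cramer representation are in place, no further analytic input is required.
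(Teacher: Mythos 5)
Your proposal is correct and follows essentially the same route as the paper: both invoke Lemma \ref{lspectrediscret} to get $d(z)\neq0$ for $\Im z>0$, invert \eqref{systserge} via the adjugate (your Cramer/cofactor formulation is the same thing as the paper's $\mathcal{D}(z)^{-1}=d(z)^{-1}\operatorname{adj}(\mathcal{D}(z))$), and then substitute back into \eqref{sergeu1}--\eqref{sergeu2} to read off the six kernel cases from the structure of the entries of $\mathcal{D}(z)$, $R$ and $F(z,f)$. Your write-up is in fact somewhat more explicit than the paper's about why the absorbed phases $e^{iz\ell_j}$ keep the coefficients polynomial in $e^{\pm iz\ell_j}$ and about which pairs $(\sigma,\tau)$ occur in each case, but this is elaboration rather than a different argument.
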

 \begin{proof}
 By the previous Lemma, $d(z)$ is different from zero for any $z\in \CC$ such that
$\Im z> 0$, therefore \sn{the linear system
(\ref{systserge}) has a unique solution $X(z,f)$ given by
\[
X(z,f)=\mathcal{D}(z)^{-1} R F(z,f).
\]
Hence for a vector $X=((c_k,d_k)_{k\in I_B}, (d_k)_{k\in I_U})^\top\in \mathbb{C}^N$, if we introduce
the linear mappings
%projections
\beqs
P_jX= \ali{(c_j, d_j)}^\top, \forall j\in I_B,
\\
P_jX=d_j, \forall j\in I_U,
\eeqs
we deduce that the solution $u$ of 
\eqref{systemresolvante}  in the form
\eqref{sergeu1}-\eqref{sergeu2} is given by
\beqs
u_k(x)&=& \ali{(e^{-izx} , e^{izx})} P_k\mathcal{D}(z)^{-1} R F(z,f)
+  \frac{1}{2iz}\int_0^{\ell_k} e^{iz |x-y|} f_k(y) \,dy, \forall x\in (0, \ell_k), \forall k\in I_B,
\\
 u_k(x)&=&   e^{izx} P_k\mathcal{D}(z)^{-1} R F(z,f)
+  \frac{1}{2iz}\int_0^{\infty} e^{iz |x-y|} f_k(y) \,dy, \forall x\in (0, +\infty), \forall k\in I_U.
\nonumber
\eeqs
Using that $\mathcal{D}(z)^{-1}=d(z)^{-1}\operatorname{adj}(\mathcal{D}(z))$,  the form of the coefficients of the matrices $\mathcal{D}(z)$ and $R$
and of the right-hand side $F(z,f)$, we arrive at \eqref{resolvantformula}
with kernels in the announced forms.}
  \end{proof}

As usual, to obtain the resolution of the identity of $H$, we want to use the limiting absorption principle
that consists \ali{in passing} to the limit in $K(x,y,z^2)$ as $\Im z$ goes to zero.
But in view of the presence of the factor $d(z)$  in the denominator of the kernels,
this limit is a priori not allowed \sn{if $S_d$ is not empty as $d$ will} have zeroes on the real line. 
\sn{In such a case} \ali{we try to} split up the kernel $K(x,y,z^2)$ into a \sn{part $K_p$ with
poles at the points $\pm\lambda_{m}$ but that, in practice, is continuous on some  closed subspaces of $L^2(\mathcal{R})$} and a part $K_c$ which is
continuous on the real line but \ali{possibly}  discontinuous when crossing it.
To be able to do so, we make the following assumption, \ali{which is} satisfied for the operator $H$ on a large class of graphs $\mathcal{R}$.

\begin{assumption}\label{lassumptionserge}
\ali{Let us recall that the $\ell_j$ are the lengths of the finite edges $e_j$ of our metric graph}.
The determinant $d(z)$ can be factorized in the form
\be\label{factdetD(z)}
d(z)=d_d(z)d_c(z), \forall z\in \CC_+\cup \RR,
\ee
with two   holomorphic functions $d_d, d_c$
 on $\CC$ that are  
\sn{polynomials \ali{in} the variables  
\ali{$e^{iz r_{j}^+\ell_j}$, $e^{-iz r_{j}^-\ell_j}$}, $j\in I_B$ and %\ali{$e^{\pm iz}$}
for some positive $\ali{r_{j}^+, r_{j}^-} \in \mathbb{Q}$,} with coefficients independent of $z$
 and
 such that
 \[
 d_d(z)=0 \Leftrightarrow z^2\in S_d,
 \]
and therefore
\[
d_c(z)\ne 0, \forall z\in \RR.
\]
There exist two matrix valued
functions
\beqs
\mathcal{M}_c: \CC_+\cup \RR^* \to \CC^{N\times N}: z \mapsto \mathcal{M}_c(z),
\\
\mathcal{M}_p: \CC_+\cup \RR^* \to \CC^{N\times N}: z \mapsto \mathcal{M}_p(z),
\eeqs
whose entries are \sn{polynomials on the variables  \ali{$e^{iz r_{j}^+ \ell_j}$, $e^{-izr_{j}^-\ell_j}$}, $j\in I_B$, %and \ali{$e^{\pm}iz$}, 
for some positive 
$\ali{r_{j}^+, r_{j}^-}\in \mathbb{Q}$} with coefficients independent of $z$
(hence holomorphic on $\CC$) such that
the   adjugate of $\mathcal{D}(z)$
admits the splitting
\be\label{serge:ass0}
\operatorname{adj}(\mathcal{D}(z))= \mathcal{M}_p(z)+ d_d(z)\mathcal{M}_c(z),  \forall z\in \CC_+\cap \RR^*.
\ee
Further for all $m\in \mathbb{N}$, 
there exists a matrix $\mathcal{M}_m$, such that
\be\label{serge:ass2}
\mathcal{D}(z)^{-1}= \frac{1}{z-\lambda_m}  \mathcal{M}_m+ \mathcal{R}_m(z),
\ee
for all $z$ in a neighborhood of $\lambda_m$, with $\mathcal{R}_m$ holomorphic at $\lambda_m$.
\end{assumption}

\sn{If $S_d$ is empty, this assumption always holds with $d_d=1$
and $\mathcal{M}_p=0$, while \eqref{serge:ass2} is} \ali{unnecessary.}

As  $\mathcal{D}(z)^{-1}= d(z)^{-1} \operatorname{adj}(\mathcal{D}(z))$,   \eqref{serge:ass0} directly implies  that $\mathcal{D}(z)^{-1}$
can be written as
\be\label{serge:ass1}
\mathcal{D}(z)^{-1}= \frac{1}{d(z)}  \mathcal{M}_p(z)+ \frac{1}{d_c(z)}\mathcal{M}_c(z), \forall z\in \CC_+.
\ee
Note further that the assumption \eqref{serge:ass2} implies in particular that
\[
\mathbb{I}=\frac{1}{z-\lambda_m}  \mathcal{M}_m\mathcal{D}(z)+ \mathcal{R}_m(z)\mathcal{D}(z),
\]
for all $z$ in a neighborhood of $\lambda_m$ and consequently
passing to the limit in $z\to \lambda_m$, we find that
\be\label{consequence:serge:ass2}
\mathcal{M}_m\mathcal{D}(\lambda_m)=0.
\ee

Before going on, \sn{we want to show that   $\mathcal{M}_p(z) R F(z,f)$ is  zero
at $\lambda_m$ for data orthogonal to all eigenvectors.}
For that purposes, we first give a relationship between an orthogonality property of
$f$ with the range of the matrix $\mathcal{D}(\lambda_m)$.
\begin{lemma}\label{lortho}
Let $f$ be in  $L^2(\mathcal{R})$ with a compact support.
The system (\ref{systserge}) with $z=\lambda_m$, namely
\be \label{systsergelambdam}
\mathcal{D}(\lambda_m) X_m(f)=b(\lambda_m, f),
\ee
admits a solution if and only if
$f$   is orthogonal to all eigenvectors \sn{$\varphi^{(k)}$  of $H$
associated with the eigenvalue $\lambda_k^2=\lambda_m^2$.}
\end{lemma}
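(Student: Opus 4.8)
\emph{Overall strategy.} The plan is to reduce the solvability question to the finite-dimensional Fredholm alternative for the square matrix $\mathcal{D}(\lambda_m)$, and then to identify the two resulting orthogonality conditions by means of a Green-type identity on the compact core $\mathcal{R}_B$. Throughout one uses that the compact support of $f$ makes $b(\lambda_m,f)=RF(\lambda_m,f)$ well defined: the only delicate entries are those attached to the outer edges, where $\int_0^{\infty}e^{i\lambda_m y}f_k(y)\,dy$ converges precisely because $f_k$ is compactly supported. Note also that every eigenvector $\varphi^{(k)}$ lies in $L^2(\mathcal{R})$ with support in $\mathcal{R}_B$. Since $\mathcal{D}(\lambda_m)\in\mathbb{C}^{N\times N}$, the system \eqref{systsergelambdam} is solvable if and only if $b(\lambda_m,f)$ is orthogonal, for the standard Hermitian product of $\mathbb{C}^N$, to $\ker\mathcal{D}(\lambda_m)^{\ast}$. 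The task is therefore to match this with the condition $f\perp\operatorname{span}\{\varphi^{(k)}:\lambda_k^2=\lambda_m^2\}$.

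\emph{Necessity.} Assuming \eqref{systsergelambdam} has a solution $X_m(f)$, I would let $u$ be the associated function of the form \eqref{sergeu1}--\eqref{sergeu2}; it satisfies $u_k''+\lambda_m^2 u_k=f_k$ on each edge together with all transmission conditions and the Dirichlet conditions at $V_{\rm ext}$. Fixing an eigenvector $\varphi=\varphi^{(k)}$ with $\lambda_k^2=\lambda_m^2$, and using that $\varphi$ is supported in $\mathcal{R}_B$, one has $(f,\varphi)_{\mathcal H}=\sum_{j\in I_B}\int_{e_j}(u_j''+\lambda_m^2 u_j)\bar\varphi_j\,dx$. Integrating by parts twice and invoking $\bar\varphi_j''=-\lambda_m^2\bar\varphi_j$ cancels every bulk term and leaves only the vertex contributions $\sum_{v}\sum_{j\in I_v\cap I_B}\bigl(\tfrac{du_j}{dn_j}(v)\,\bar\varphi(v)-u(v)\,\overline{\tfrac{d\varphi_j}{dn_j}(v)}\bigr)$. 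I expect these to vanish vertex by vertex: at $V_{\rm ext}$ both $u$ and $\varphi$ are zero; at an interior vertex the $\varphi$-sum vanishes by the Kirchhoff condition for $\varphi$ (since $\tfrac{d\varphi_j}{dn_j}=0$ on the outer edges), while the $u$-sum is killed either by the Kirchhoff condition for $u$ when $v\notin V_{\rm connecting}$, or by $\varphi(v)=0$, i.e. \eqref{eq:serge:10/09:1}, when $v\in V_{\rm connecting}$. Hence $(f,\varphi)_{\mathcal H}=0$.

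\emph{Sufficiency, by a dimension count.} Let $\mu$ be the multiplicity of $\lambda_m^2$. By the proof of Lemma \ref{lspectrediscret}, the homogeneous solutions of \eqref{systserge} at $z=\lambda_m$ are exactly the coefficient vectors of the eigenvectors extended by zero, so $\dim\ker\mathcal{D}(\lambda_m)=\mu$ and therefore $\dim\ker\mathcal{D}(\lambda_m)^{\ast}=\mu$. In the space $V$ of compactly supported data I would set $A=\{f:(f,\varphi^{(k)})_{\mathcal H}=0\ \forall k\}$ and $B=\{f:b(\lambda_m,f)\perp\ker\mathcal{D}(\lambda_m)^{\ast}\}$. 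Then $\operatorname{codim}A=\mu$ (the functionals $f\mapsto(f,\varphi^{(k)})_{\mathcal H}$ are independent because the $\varphi^{(k)}$ are linearly independent elements of $V$), while $\operatorname{codim}B\le\mu$ since $B$ is cut out by the $\mu$ linear functionals $f\mapsto\langle b(\lambda_m,f),w\rangle$, $w$ ranging over a basis of $\ker\mathcal{D}(\lambda_m)^{\ast}$. The necessity step shows $B\subseteq A$, and combined with $\operatorname{codim}B\le\mu=\operatorname{codim}A$ this forces $B=A$. By the Fredholm alternative, $B$ is precisely the set of $f$ for which \eqref{systsergelambdam} is solvable, and $A$ is the asserted orthogonality condition, giving the equivalence.

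\emph{Main obstacle.} The crux is the vertex bookkeeping in the necessity step, and this is exactly where hypothesis \eqref{eq:serge:10/09:1} is indispensable: because $u$ carries nonzero (purely oscillatory, hence non-$L^2$) data on the outer edges, the Kirchhoff identity for $u$ does not by itself annihilate the sum of normal derivatives restricted to $I_v\cap I_B$, and it is only the vanishing of $\varphi$ at the connecting vertices that rescues the cancellation. That $u\notin L^2(\mathcal{R})$ is harmless, since $\varphi$ is supported in $\mathcal{R}_B$ and the whole identity lives on $\mathcal{R}_B$.
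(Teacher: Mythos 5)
Your proof is correct, but it takes a genuinely different route from the paper's. Both arguments open with the same finite-dimensional Fredholm alternative: solvability of \eqref{systsergelambdam} is equivalent to $b(\lambda_m,f)\perp\ker\mathcal{D}(\lambda_m)^{\ast}$. The paper then identifies this condition \emph{directly} with orthogonality to the eigenvectors: it rewrites $(b(\lambda_m,f),Y_\ell)_{\mathbb{C}^N}=(F(\lambda_m,f),R^{\top}Y_\ell)_{\mathbb{C}^N}$ explicitly as $(f,\psi_\ell)_{\mathcal H}$ for concrete functions $\psi_\ell$ assembled from the components of $R^{\top}Y_\ell$, and then shows by a duality argument (testing against $\Delta_{\mathcal R}u+\lambda_m^2u$ for compactly supported $u\in D(H)$, for which the system is solvable by construction) that each $\psi_\ell$ satisfies the eigenvalue equation and the vertex conditions and vanishes on $\mathcal{R}_U$, hence is an eigenvector of $H$ at $\lambda_m^2$. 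You instead prove necessity by a direct Green identity on $\mathcal{R}_B$ --- correctly locating where \eqref{eq:serge:10/09:1} and the two Kirchhoff conditions are needed --- and recover sufficiency by a codimension count, using that $\dim\ker\mathcal{D}(\lambda_m)$ equals the multiplicity $\mu$ (a fact the paper also invokes, extracted from the proof of Lemma \ref{lspectrediscret}). The paper's route buys an explicit description of the left null space of $\mathcal{D}(\lambda_m)$ in terms of eigenfunctions; your route avoids computing $R^{\top}Y_\ell$ altogether, at the price of the dimension count, which is legitimate here because the functionals $f\mapsto(f,\varphi^{(k)})_{\mathcal H}$ are independent on compactly supported data (the $\varphi^{(k)}$ being themselves supported in $\mathcal{R}_B$) and because the inclusion $B\subseteq A$ together with $\operatorname{codim}B\le\mu=\operatorname{codim}A$ forces equality.
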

\begin{proof}
As $\mathcal{D}(\lambda_m)$ is a square matrix, the system
\eqref{systsergelambdam} has a solution if and only if 
$b(\lambda_m, f)$ is orthogonal to \ali{$k_{m}$} independent vectors $Y_\ell$, $\ell=1, \cdots \ali{k_{m}}$, where \ali{$k_{m}$} is the dimension of 
$\ker \mathcal{D}(\lambda_m)$, that here coincides with the multiplicity of $\lambda_m^2$ (as eigenvalue of $-\Delta^{Dir}_{\mathcal{R}_B}$ with the additional nullity at the interior nodes belonging to the infinite edges).
As $f$ has a compact support, using \sn{that $b(\lambda_m, f)=R F(\lambda_m, f)$, 
the condition $(b(\lambda_m, f), Y_\ell)_{\mathbb{C}^N}=0$ is equivalent to
\[
(F(\lambda_m, f), R^\top Y_\ell)_{\mathbb{C}^N}=0.
\]
Hence \ali{writing} $R^\top Y_\ell=((c_{\ell, k},d_{\ell,k})_{k\in I_B}, (d_{\ell,k})_{k\in I_U})^\top$,   using the definition of $F(\lambda_m, f),$
and recalling that $\lambda_m$ is positive, this is again equivalent to
\beqs
\sum_{k\in I_B}\left(\overline{c_{\ell, k}} \int_0^{\ell_k} e^{i\lambda_m  y} f_k(y) \,dy
+\overline{d_{\ell, k}} \int_0^{\ell_k} e^{i\lambda_m (\ell_k-y)} f_k(y) \,dy
\right)
\\
+\sum_{k\in I_U} \overline{d_{\ell, k}} \int_0^\infty e^{i\lambda_m  y} f_k(y) \,dy
 =0.
 \eeqs
This is finally equivalent to
\be \label{ortholambdam}
(f, \psi_\ell)_{\mathcal{H}}=0, \forall \ell=1, \cdots, k_m,
\ee
where $\psi_\ell\in L^2_{\rm loc}(\mathcal{R})$ is given by
\beqs
\ali{\psi_{\ell, k}(y)}&=& c_{\ell, k}  e^{-i\lambda_m  y}+d_{\ell, k}  e^{-i\lambda_m \ell_k}  e^{i \lambda_m y} \hbox{ on } (0,\ell_k), \forall k\in I_B,\\
\ali{\psi_{\ell, k}(y)}&=& d_{\ell, k}    e^{-i \lambda_m y} \hbox{ on }(0,\infty), \forall k\in I_U.
\eeqs
This means that $\psi_\ell$ satisfies
the differential equation
\be\label{sn:4/4:1}
\psi_\ell''+\lambda_m^2\psi_\ell=0,
\ee
on all edges of $\mathcal{R}$.
To show that the $\psi_\ell$'s correspond to the eigenvectors of $H$
associated with the eigenvalue $\lambda_m^2$,} it suffices to notice that 
for any $u\in D(H)$ with a compact support, the system
(compare with \eqref{systemresolvante})
\[
\Delta_{{\mathcal R}} u+\lambda_m^2 u=f \hbox{ in } {\mathcal R}
\]
has a solution with $f$ defined as the left-hand side of this identity. Using the procedure of the beginning of this section, the system 
\eqref{systsergelambdam} has a solution and therefore $\psi_\ell$ satisfies
\[
(\Delta_{{\mathcal R}} u+\lambda_m^2 u, \psi_\ell)_{\mathcal{R}}=0,
\]
for any $u\in D(H)$ with a compact support.
By \ali{chosing various} functions $u$, we deduce that
$\psi_\ell$ \sn{satisfies
the differential equation \eqref{sn:4/4:1}}
on all edges of $\mathcal{R}$ as well as the continuity and Kirchhoff law at the interior nodes and Dirichlet boundary conditions at the exterior nodes.
Hence \sn{the proof of Lemma \ref{lspectrediscret} implies that $\psi_\ell=0$ on $\mathcal{R}_U$ and that   indeed the  $\psi_\ell$'s correspond to the eigenvectors of $H$
associated with the eigenvalue $\lambda_m^2$.}
\end{proof}

\begin{corollary}\label{coro:bdX(z,f)}
Let $f\in L^2(\mathcal{R})$ with a compact support be orthogonal to  all eigenvectors \sn{$\varphi^{(k)}$ 
associated with the eigenvalue $\lambda_k^2=\lambda_m^2$.}
Then  
the solution 
$$X(z,f)=((c_k(z,f),d_k(z,f))_{k\in I_B}, (d_k(z,f))_{k\in I_U})^\top$$ of 
\eqref{systserge} \sn{is continuous in   a neighborhood of $\lambda_m$, 
in particular it remains bounded  in   a neighborhood of $\lambda_m$:}
\be\label{bdX(z,f)}
\|X(z,f)\|_2\leq C \|f\|_{L^1(\mathcal{R})},
\ee
with a positive constant $C$ independent of $f$ but depending on $\lambda_m$.
\end{corollary}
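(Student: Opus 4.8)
The plan is to substitute the Laurent expansion \eqref{serge:ass2} of $\mathcal{D}(z)^{-1}$ into the solution formula $X(z,f)=\mathcal{D}(z)^{-1}b(z,f)$ of \eqref{systserge} and to show that the orthogonality hypothesis forces the singular part to cancel. Since $f$ has compact support, every entry of $F(z,f)$, and hence of $b(z,f)=RF(z,f)$, is (up to the factor $\frac{1}{2iz}$, which is regular near $\lambda_m>0$) an integral of $e^{\pm iz(\cdot)}f_k$ over a bounded set, so $z\mapsto b(z,f)$ is holomorphic in a neighborhood of $\lambda_m$. Plugging in \eqref{serge:ass2} gives
\[
X(z,f)=\frac{1}{z-\lambda_m}\,\mathcal{M}_m\,b(z,f)+\mathcal{R}_m(z)\,b(z,f),
\]
and, as $\mathcal{R}_m$ is holomorphic at $\lambda_m$, the second summand is already regular there; the whole difficulty lies in the first, a priori singular, term.

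First I would establish the cancellation $\mathcal{M}_m\,b(\lambda_m,f)=0$. Because $f$ is orthogonal to all eigenvectors $\varphi^{(k)}$ with $\lambda_k^2=\lambda_m^2$, Lemma \ref{lortho} shows that the system \eqref{systsergelambdam} is solvable, i.e. $b(\lambda_m,f)$ lies in the range of $\mathcal{D}(\lambda_m)$. On the other hand, \eqref{consequence:serge:ass2} reads $\mathcal{M}_m\mathcal{D}(\lambda_m)=0$, which says exactly that the range of $\mathcal{D}(\lambda_m)$ is contained in $\ker\mathcal{M}_m$. The two facts together give $\mathcal{M}_m\,b(\lambda_m,f)=0$. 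Hence the holomorphic $\mathbb{C}^N$-valued map $z\mapsto\mathcal{M}_m b(z,f)$ vanishes at $\lambda_m$ and therefore factors as $(z-\lambda_m)$ times a holomorphic function; the singularity of the first term is removable, and $X(\cdot,f)$ extends holomorphically, in particular continuously, from $\CC_+$ to a full neighborhood of $\lambda_m$.

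It remains to make this quantitative. I would fix a closed disc $\overline{D(\lambda_m,r)}$ on which the above holomorphy holds and, using $\lambda_m>0$, choose $r$ so small that $|z|\geq c>0$ there. If $f$ is supported where the edge variable does not exceed $M$, each entry of $F(z,f)$ is bounded on the disc by $\frac{1}{2c}e^{rM}\|f\|_{L^1(\mathcal{R})}$, so $\|b(z,f)\|_2\leq C_1\|f\|_{L^1(\mathcal{R})}$ with $C_1$ depending only on $r,M,\lambda_m$ (the entries of $R$ being $0,\pm1$). Then $\|\mathcal{R}_m(z)b(z,f)\|_2\leq\big(\max_{\overline{D(\lambda_m,r)}}\|\mathcal{R}_m\|\big)C_1\|f\|_{L^1(\mathcal{R})}$, while for $h(z):=(z-\lambda_m)^{-1}\mathcal{M}_m b(z,f)$, holomorphic on $D(\lambda_m,r)$, the maximum modulus principle applied to each component gives, for $z$ in the interior,
\[
\|h(z)\|_2\leq \sqrt{N}\,\max_{|w-\lambda_m|=r}\frac{\|\mathcal{M}_m b(w,f)\|_2}{r}\leq \frac{\sqrt{N}\,\|\mathcal{M}_m\|\,C_1}{r}\,\|f\|_{L^1(\mathcal{R})}.
\]
Adding the two contributions yields \eqref{bdX(z,f)} with a constant $C$ depending on $\lambda_m$ but not on $f$.

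The one genuinely delicate point is the cancellation $\mathcal{M}_m b(\lambda_m,f)=0$: this is where the spectral hypothesis (orthogonality to the eigenvectors, converted into the range condition $b(\lambda_m,f)\in\operatorname{Range}\mathcal{D}(\lambda_m)$ by Lemma \ref{lortho}) must be matched against the algebraic residue identity \eqref{consequence:serge:ass2}. Once it is in place, the rest is a routine holomorphy-and-compactness estimate, in which the compact support of $f$ is what both makes $b(\cdot,f)$ holomorphic and furnishes the $L^1$ control of its entries.
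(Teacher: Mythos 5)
Your proof is correct and follows essentially the same route as the paper: both arguments combine the solvability of \eqref{systsergelambdam} (from Lemma \ref{lortho}) with the identity \eqref{consequence:serge:ass2} to kill the residue term in the expansion \eqref{serge:ass2}, the only cosmetic difference being that you show $\mathcal{M}_m b(\lambda_m,f)=0$ directly while the paper splits $b(z,f)$ into $b(\lambda_m,f)$ plus a difference quotient. Your closing maximum-modulus estimate actually supplies the quantitative bound \eqref{bdX(z,f)} more explicitly than the paper does, which only asserts the existence of the limit.
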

\begin{proof}
\ali{Supposing} that  $X_m(f)$ is a  solution of 
\eqref{systsergelambdam}, we notice that
\[
\mathcal{D}(z)^{-1}b(\lambda_m, f)=\mathcal{D}(z)^{-1}\mathcal{D}(\lambda_m) X_m(f).
\]
Then using the assumption \eqref{serge:ass2} as well as \eqref{consequence:serge:ass2} we get
\[
\mathcal{D}(z)^{-1}b(\lambda_m, f)=\mathcal{R}_m(z)\mathcal{D}(\lambda_m) X_m(f),
\]
which means that $\mathcal{D}(z)^{-1}b(\lambda_m, f)$ is holomorphic at $\lambda_m$.

Coming back to $X(z,f)$, we may write
\[
X(z,f)=\mathcal{D}(z)^{-1}b(z, f)=\mathcal{D}(z)^{-1}b(\lambda_m, f)+
\mathcal{D}(z)^{-1}(b(z, f)-b(\lambda_m, f)),
\]
and again using  the assumption \eqref{serge:ass2}, we arrive at
\beqs
\ali{X(z,f)}&=&\mathcal{D}(z)^{-1}b(\lambda_m, f)+\mathcal{R}_m(z)(b(z, f)-b(\lambda_m, f))
\\
&+&
\frac{1}{z-\lambda_m}  \mathcal{M}_m(b(z, f)-b(\lambda_m, f)).
\eeqs
Since $b(z, f)$ is holomorphic at $\lambda_m$, the ratio
\[
\frac{b(z, f)-b(\lambda_m, f)}{z-\lambda_m}
\]
admits a limit as $z$ goes to $\lambda_m$, hence $X(z,f)$ as well.
\end{proof}

\begin{corollary}\label{lemma:annulationMp}
\sn{Let $f\in L^2(\mathcal{R})$ with a compact support be orthogonal to  all eigenvectors $\varphi^{(k)}$ 
associated with the eigenvalue $\lambda_k^2=\lambda_m^2$.
Then 
\[
\mathcal{M}_p(\lambda_m) R F(\lambda_m,f)= 0.
\]}
\end{corollary}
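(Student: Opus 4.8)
The plan is to combine the decomposition \eqref{serge:ass1} of $\mathcal{D}(z)^{-1}$ with the boundedness of $X(z,f)$ near $\lambda_m$ supplied by Corollary \ref{coro:bdX(z,f)}. Applying \eqref{serge:ass1} to $b(z,f)=RF(z,f)$ and recalling that $X(z,f)=\mathcal{D}(z)^{-1}b(z,f)$, I would write, for $z\in\CC_+$ in a neighborhood of $\lambda_m$,
\[
X(z,f)=\frac{1}{d(z)}\mathcal{M}_p(z)\,b(z,f)+\frac{1}{d_c(z)}\mathcal{M}_c(z)\,b(z,f).
\]
First I would observe that the second summand is holomorphic, hence bounded, at $\lambda_m$: indeed $d_c(\lambda_m)\ne 0$ by Assumption \ref{lassumptionserge}, while the entries of $\mathcal{M}_c(z)$ are polynomials in the exponentials $e^{iz r_j^+\ell_j}$, $e^{-iz r_j^-\ell_j}$, and $b(z,f)=RF(z,f)$ is holomorphic in a neighborhood of $\lambda_m$ (the only possible singularity of $F(z,f)$ is at $z=0$, whereas $\lambda_m>0$, and $f$ has compact support so the defining integrals are entire). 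By Corollary \ref{coro:bdX(z,f)}, the orthogonality hypothesis on $f$ guarantees that $X(z,f)$ itself remains bounded as $z\to\lambda_m$. Subtracting, I conclude that the first summand $\frac{1}{d(z)}\mathcal{M}_p(z)\,b(z,f)$ is bounded as $z\to\lambda_m$ as well.

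Next I would use the factorization $d(z)=d_d(z)d_c(z)$ of Assumption \ref{lassumptionserge}, together with $d_d(\lambda_m)=0$ (because $\lambda_m^2\in S_d$) and the holomorphy of $d$, to deduce that $d(z)\to 0$ as $z\to\lambda_m$. Writing
\[
\mathcal{M}_p(z)\,b(z,f)=d(z)\cdot\Big(\tfrac{1}{d(z)}\mathcal{M}_p(z)\,b(z,f)\Big),
\]
the right-hand side is a product of a factor tending to $0$ and a factor that is bounded near $\lambda_m$, so it tends to $0$. Since the entries of $\mathcal{M}_p(z)$ are polynomials in $e^{iz r_j^+\ell_j}$, $e^{-iz r_j^-\ell_j}$ and $b(z,f)$ is holomorphic near $\lambda_m$, the left-hand side $\mathcal{M}_p(z)\,b(z,f)$ is continuous at $\lambda_m$; passing to the limit therefore yields $\mathcal{M}_p(\lambda_m)\,b(\lambda_m,f)=0$, which is exactly $\mathcal{M}_p(\lambda_m)RF(\lambda_m,f)=0$.

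The argument is short, and its only delicate point is the justification that every object involved extends holomorphically, hence continuously, across $\lambda_m$ on the real axis, so that the limit of the product may be evaluated factorwise; this is where one must use carefully that $\lambda_m>0$ (so that the prefactor $\frac{1}{2iz}$ appearing in $F$ causes no trouble) and that $f$ is compactly supported (so that the integrals defining $F(z,f)$ are entire in $z$). The genuinely substantive input, namely that $X(z,f)$ does \emph{not} blow up at the eigenvalue $\lambda_m$ under the orthogonality assumption, is already contained in Corollary \ref{coro:bdX(z,f)}, so no further spectral analysis is required here.
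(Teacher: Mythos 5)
Your proposal is correct and follows essentially the same route as the paper: both apply the splitting \eqref{serge:ass1} to $b(z,f)=RF(z,f)$, use Corollary \ref{coro:bdX(z,f)} together with $d_c(\lambda_m)\neq 0$ to bound $\frac{1}{d(z)}\mathcal{M}_p(z)RF(z,f)$ near $\lambda_m$, and conclude by multiplying by $d(z)\to d(\lambda_m)=0$. Your version merely spells out more explicitly the holomorphy of $b(z,f)$ and $\mathcal{M}_p(z)$ across $\lambda_m$, which the paper leaves implicit.
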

\begin{proof}
 \sn{By
\eqref{serge:ass1}, we have
\[
X(z,f)=\frac{1}{d(z)}  \mathcal{M}_p(z)R F(z,f)+ \frac{1}{d_c(z)}\mathcal{M}_c(z)R F(z,f), \forall z\in \CC_+.
\]
Therefore by  the estimate \eqref{bdX(z,f)} of the previous Corollary and the fact that $d_c(z)$ is different from zero at $\lambda_m$, we have
\[
\left\|\frac{1}{d(z)}  \mathcal{M}_p(z)R F(z,f)\right\|_2\leq C' \|f\|_{L^1(\mathcal{R})},
\]
for all $z$ 
in   a neighborhood of $\lambda_m$,  
with a positive constant $C'$ independent of $f$ but depending on $\lambda_m$. We conclude by passing to the limit as $z\to \lambda_m$,
and reminding that $d(z)\to d(\lambda_m)=0$ as $z\to \lambda_m$.}
\end{proof}

\sn{The previous Corollary shows that for a function $f$
with a compact support orthogonal to all eigenvectors $\varphi^{(m)}$, 
$X(z,f)$ is continuous up the half-line $(0,\infty)$
and consequently 
$K(x, x', z^2) f$ is continuous up to $(0,\infty)$ for such a $f$.
Nevertheless this is not sufficient to  apply Stone's formula and   obtain the resolution of the identity of $H$. 
Hence we will need further assumptions.
For that purposes, let us  denote by  \[
Y_D=\hbox{ \rm Span} \{\varphi^{(m)}\}_{m\in \mathbb{N}},\]
the closed subspace of $L^2(\mathcal{R})$ spanned by the
eigenvectors $\varphi^{(m)}$. 
Further for all $j\in I_U$, we introduce  the closed subspace $Y_j$ of functions in $L^2(\mathcal{R})$ that are zero on $e_k$,
for all $k\in I_B$ and all $k\in I_U\setminus\{j\}$ 
and let $Y_B$ be the orthogonal complement of $\oplus_{i\in I_U} Y_i  \oplus_\perp Y_D$ in $L^2(\mathcal{R})$, so that
\[
L^2(\mathcal{R})=(\oplus_{i\in I_U} Y_i) \oplus_\perp Y_D\oplus_\perp Y_B.
\]
We also denote by $\Pi_j$ (resp. $\Pi_D$, $\Pi_B$) the orthogonal projection on $Y_j$ (resp. $Y_D$, $Y_B$).}

\sn{We are ready to formulate our additional assumptions.
\begin{assumption}\label{lassumptionsergeYj}
For all $k\in I_U$, there exists a mapping
\[
X_k: \CC_+\cup (0,\infty) \to \CC^{N}: z \mapsto X_k(z),
\]
whose entries are polynomials on the variables  $e^{iz r_j^+\ell_j}$, $e^{-iz r_j^-\ell_j}$, $j\in I_B$, \ali{and $e^{\pm iz}$} for some positive $r_j^+, r_j^-\in \mathbb{Q}$, with coefficients independent of $z$ such that
\be\label{hypoYj}
\frac{1}{d(z)}\mathcal{M}_p(z) R F(z,f)=
\frac{1}{2iz d_c(z)} 
\left(\int_0^{\infty} e^{iz y} f_k(y) \,dy\right)
 X_k(z),
\forall f\in Y_k.
\ee
\end{assumption}
Note that this assumption implies that
\be\label{hypoYjcons}
\mathcal{D}(z)^{-1} R F(z,f)=\frac{1}{d_c(z)}
\mathcal{M}_c(z)+\frac{1}{2iz d_c(z)} 
\left(\int_0^{\infty} e^{iz y} f_j(y) \,dy\right)
 X_k(z),
\forall f\in Y_k,
\ee
and consequently on $Y_k$, the kernel $K$ has no more poles at $\lambda_m$ for all $m\in \mathbb{N}$. More precisely, the arguments of the proof of Theorem \ref{theo1} yields
\be\label{resolvantformulaYi} [R(z^2, H)f](x)= \int_{{\mathcal R}} K_k(x, x', z^2) f(x') \; dx', \forall f\in Y_k,
\ee
where the kernel $K_k$ is of the form
%\begin{equation}
%\label{kernel11}  K(x,y,z^2) =
%    \frac{1}{2iz}  \left(e^{iz|x-y|}+ \frac{1}{d(z)} \sum_{\sigma=\pm,\, \tau=\pm}   c_{k,\sigma, \tau}(z) e^{iz (\sigma x+\tau y)}
%\right),   \forall x, y \in  e_k, k\in I_B, 
%\end{equation}    
%\begin{equation}
%\label{kernel11b}  \sn{K(x,y,z^2) =
%    \frac{1}{2izd(z)}   \sum_{\sigma=\pm,\, \tau=\pm}   c_{k,k',\sigma, \tau}(z) e^{iz (\sigma x+\tau y)},   \forall x \in  e_k, y\in e_{k'}, k, k'\in I_B, k\ne k',}
%\end{equation}    
\begin{equation}
 K_k(x,y,z^2) = \frac{1}{2iz} \left(e^{iz|x-y|}+ \frac{c^c_{k}(z)}{d_c(z)}     e^{iz (x+y)}\right),  \forall x, y \in  e_k, 
\label{kernel22Yj}
 \end{equation}
\begin{equation}
K_k(x,y,z^2) = \frac{1}{2iz d_c(z)}   c^c_{k',k}(z) e^{iz (x+y)} ,  \forall x\in  e_{k'}, y\in e_{k},   {k'} \in I_U, k\ne k',
\label{kernel21Yj}      
\end{equation}
%\begin{equation}
%K(x,y,z^2) = \frac{1}{2iz d(z)} \sum_{\tau=\pm}  c^c_{k',k,\tau}(z) e^{iz (x+\tau y)} ,  \forall x\in  e_{k'}, y\in e_{k},   k'\in I_B,
%\label{kernel21Yjb}      
%\end{equation}
\begin{equation}
K_k(x,y,z^2) = \frac{1}{2iz d_c(z)} \sum_{\sigma=\pm}   c^c_{k',k,\sigma}(z) e^{iz (\sigma x+y)} ,  \forall x\in e_{k'}, y\in  e_{k},  k'\in I_B,  
\label{kernel12Yj}      
\end{equation}
where the functions $c^c_{k}, c^c_{k',k}$ and $c^c_{k',k,\sigma}$ are \sn{holomorphic functions that are
 polynomials on the variables  $e^{iz r_j^+\ell_j}$, $e^{-iz r_j^-\ell_j}$, $j\in I_B$, for some positive $r_j^+, r_j^-\in \mathbb{Q}$} with coefficients independent of $z$ but that may depend on $k, k', \sigma$ and $\tau$.
}
\sn{
\begin{assumption}\label{lassumptionsergeYB}
It holds
\be\label{hypoYB}
\mathcal{M}_p(z) R F(z,f)=0, 
\forall f\in Y_B.
\ee
\end{assumption}
This assumption directly implies that
\be\label{hypoYBcons}
\mathcal{D}(z)^{-1} R F(z,f)=\frac{1}{d_c(z)}
\mathcal{M}_c(z),
\forall f\in Y_B,
\ee
and consequently on $Y_B$, the kernel $K$ has no more poles at $\lambda_m$ for \ali{any} $m\in \mathbb{N}$, namely
\be\label{resolvantformulaYB} [R(z^2, H)f](x)= \int_{{\mathcal R}} K_B(x, x', z^2) f(x') \; dx', \forall f\in Y_B,
\ee
where the kernel $K_B$ is of the form
\begin{equation}
\label{kernel11B}  K_B(x,y,z^2) =
    \frac{1}{2iz}  \left(e^{iz|x-y|}+ \frac{1}{d_c(z)} \sum_{\sigma=\pm,\, \tau=\pm}   c^c_{B,k,\sigma, \tau}(z) e^{iz (\sigma x+\tau y)}
\right),   \forall x, y \in  e_k, k\in I_B, 
\end{equation}    
\begin{equation}
\label{kernel11bb}  K_B(x,y,z^2) =
    \frac{1}{2izd(z)}   \sum_{\sigma=\pm,\, \tau=\pm}   c^c_{B,k,k',\sigma, \tau}(z) e^{iz (\sigma x+\tau y)},   \forall x \in  e_k, y\in e_{k'}, k, k'\in I_B, k\ne k',
\end{equation}    
%\begin{equation}
%K(x,y,z^2) = \frac{1}{2iz d(z)} \sum_{\sigma=\pm}   c_{k,k',\sigma}(z) e^{iz (\sigma x+y)} ,  \forall x\in e_k, y\in  e_{k'},  k\in I_B,  k' \in I_U,
%\label{kernel12}      
%\end{equation}
%\begin{equation}
% K(x,y,z^2) = \frac{1}{2iz} \left(e^{iz|x-y|}+ \frac{c_{k}(z)}{d(z)}     e^{iz (x+y)}\right),  \forall x, y \in  e_k, k\in I_U,
%\label{kernel22}
% \end{equation}
% \begin{equation}
%K(x,y,z^2) = \frac{1}{2iz d(z)} \sum_{\tau=\pm}  c_{k,k'}(z) e^{iz (x+y)},  \forall x\in  e_k, y\in e_{k'},  k, {k'} \in I_U, k\ne k',
%\label{kernel22b}      
%\end{equation}
\begin{equation}
K_B(x,y,z^2) = \frac{1}{2iz d(z)} \sum_{\tau=\pm}  c^c_{B,k,k',\tau}(z) e^{iz (x+\tau y)} ,  \forall x\in  e_k, y\in e_{k'},  k \in I_U,   k'\in I_B,
\label{kernel21B}      
\end{equation}
where the functions $c^c_{B,k,\sigma, \tau}, c^c_{B,k,k',\sigma, \tau},$ and $c^c_{B,k,k',\tau}$ are  holomorphic functions that are
 polynomials on the variables  $e^{iz r_j^+\ell_j}$, $e^{-iz r_j^-\ell_j}$, $j\in I_B$, 
 %\ali{and $e^{\pm iz}$}, 
 for some positive $r_j^+, r_j^-\in \mathbb{Q}$ with coefficients independent of $z$ but that may depend on $k, k', \sigma$ and $\tau$.
}

%Let us now introduce the subspace $Y_p$ of $L^2(\mathcal{R}$
%spanned by the eigenvectors $\varphi^{(m)}$, $m\in \mathbb{N}$, and consider the  largest space $Y_c$  of  $Y_p^\perp$ (the orthogonality
%being the $L^2(\mathcal{R}$-sense) such that
%\be\label{annulationKp}
%\int_{\mathcal{R}} K_p(x,y,z^2) f(y) dy=0, \forall f\in Y_c.
%\ee
%In   the case of the tadpole from \cite{amamnicbis}
%and in the examples below, this space is either the whole $Y_p^\perp$
%and at least contains 
%the space of functions which are zero on $\mathcal{R}_B$. 
%For the example of the tadpole,  the splitting \eqref{splitkernel} can be transformed into another one, so that
%$K(x,y,z^2)=K_c(x,y,z^2)+K_p(x,y,z^2)$
%We conjecture that $Y_p^\perp$ is equal to}

\sn{Note that the assumptions \ref{lassumptionsergeYj}
and \ref{lassumptionsergeYB} always hold if $S_d$ is empty, with $K_k=K_B=K$, for all $k\in I_U$.}

\sn{We are now able to give the expression of the resolution of the identity $E$ of $H$ on each $Y_k$ under the assumption \ref{lassumptionsergeYj}
and then on $H$ under the assumptions  \ref{lassumptionsergeYj} and \ref{lassumptionsergeYB}.}

\sn{
\begin{theorem} \label{res.idYj}
Let the assumptions \ref{lassumptionserge} and \ref{lassumptionsergeYj}  be satisfied. 
Fix $k\in I_U$ and take $f\in Y_k, g \in {\mathcal H}$ with a compact support and let $0< a < b < + \infty$. Then for any holomorphic  function $h$   on the complex plane, we have
\beq\label{resolidYj}
\nonumber
( h(H) E(a,b)f,g)_{\mathcal H}&=&  -
\frac{1}{\pi}
  \int\limits_{\mathcal R}    \left(\int_{(a,b) } h(\lambda)
        \int\limits_{e_k}  
         \Im K_k(x,x',\lambda)  f_k(x')\; dx'
           \; d\lambda\right) \bar g(x)dx,
\eeq
where for all $\lambda>0$, $K_k(x,x',\lambda)$ was defined  before.
\end{theorem}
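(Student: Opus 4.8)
The plan is to apply Stone's formula to $H$, being careful with the sign induced by the convention $R(z^2,H)=(z^2\mathbb{I}-H)^{-1}$. Writing the spectral variable as $\mu=z^2$ and using $R(\mu,H)=\int_{[0,\infty)}(\mu-t)^{-1}\,dE(t)$, one computes
\[
\frac{1}{2\pi i}\bigl(R(\lambda+i\eps,H)-R(\lambda-i\eps,H)\bigr)=-\frac{1}{\pi}\int_{[0,\infty)}\frac{\eps}{(\lambda-t)^2+\eps^2}\,dE(t),
\]
whose right-hand side tends, as $\eps\to0^+$, to $-\,dE(\lambda)$ weakly (the Poisson kernel $\tfrac{\eps}{\pi((\lambda-t)^2+\eps^2)}$ being an approximate identity). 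Hence, for any entire function $h$,
\[
(h(H)E(a,b)f,g)_{\mathcal H}=-\lim_{\eps\to0^+}\frac{1}{2\pi i}\int_a^b h(\lambda)\,\bigl([R(\lambda+i\eps,H)-R(\lambda-i\eps,H)]f,g\bigr)_{\mathcal H}\,d\lambda .
\]
The final factor $-\tfrac1\pi$ will arise from this $-\tfrac{1}{2\pi i}$ together with a factor $2i$ coming from the jump of the kernel across the real axis. Since $f\in Y_k$ is orthogonal to $Y_D$ and the only atoms of $E$ sit at the eigenvalues (which span $Y_D$), we have $E(\{s\})f=0$ for every $s$; thus the endpoints $a,b$ carry no mass and the usual ambiguity $\tfrac12\bigl(E([a,b])+E((a,b))\bigr)$ of Stone's formula is harmless here.

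Next I would insert the resolvent kernel valid on $Y_k$. By Assumption~\ref{lassumptionsergeYj} and \eqref{resolvantformulaYi}, for $f\in Y_k$ we have $[R(\mu,H)f](x)=\int_{e_k}K_k(x,x',\mu)f_k(x')\,dx'$, so that
\[
\bigl(R(\mu,H)f,g\bigr)_{\mathcal H}=\int_{\mathcal R}\Bigl(\int_{e_k}K_k(x,x',\mu)f_k(x')\,dx'\Bigr)\overline{g(x)}\,dx .
\]
It therefore remains to understand the two boundary values $K_k(x,x',\lambda\pm i0)$ and their difference.

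The crux of the argument---and the main obstacle---is the boundary behaviour of $K_k$. Since $\mu=z^2$ with $z\in\CC_+$, the value $\mu=\lambda+i\eps$ corresponds to $z\to\sqrt\lambda$ while $\mu=\lambda-i\eps$ corresponds to $z\to-\sqrt\lambda$, both with $\Im z\to0^+$; thus $K_k(x,x',\lambda\pm i0)$ is the limit of $K_k(x,x',z^2)$ as $z\to\pm\sqrt\lambda$. I would then prove
\[
K_k(x,x',\lambda-i0)=\overline{K_k(x,x',\lambda+i0)},\qquad\text{hence}\qquad K_k(x,x',\lambda+i0)-K_k(x,x',\lambda-i0)=2i\,\Im K_k(x,x',\lambda).
\]
This can be seen in two equivalent ways: abstractly, from $R(\bar\mu,H)=R(\mu,H)^{*}$ combined with the symmetry $K(x,y,\mu)=K(y,x,\mu)$ of the Green kernel of the self-adjoint operator $H$; or directly from the explicit formulas \eqref{kernel22Yj}--\eqref{kernel12Yj}, observing that $c^c_k,c^c_{k',k},c^c_{k',k,\sigma}$ and $d_c$ are polynomials in $e^{iz r_j^+\ell_j},e^{-iz r_j^-\ell_j}$ and $e^{\pm iz}$ with \emph{real} coefficients (these arise from the $\pm1$ entries of $\mathcal{D}(z)$ and $R$), so that replacing $s=\sqrt\lambda$ by $-s$ conjugates every exponential as well as the prefactor $\tfrac{1}{2iz}$, which is precisely complex conjugation of $K_k$.

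Finally I would justify exchanging limit and integration. By Assumption~\ref{lassumptionsergeYj} the kernel $K_k(x,x',z^2)$ has no poles on $(0,\infty)$---the factor $d_d$ is absent and $d_c$ does not vanish on $\RR$---and, because $a>0$, we stay away from $z=0$; consequently $K_k(x,x',z^2)$ extends continuously up to the real $z$-axis, uniformly for $(x,x')$ in the compact set $\operatorname{supp} g\times\operatorname{supp} f$ and $\lambda\in[a,b]$. As $h$ is continuous on $[a,b]$ and $f,g$ have compact support, dominated convergence lets the inner bracket converge to $2i\,\Im K_k(x,x',\lambda)$. Collecting the constants $-\tfrac{1}{2\pi i}\cdot 2i=-\tfrac1\pi$ and reordering the absolutely convergent integrals so that the integration in $x$ is performed last yields exactly \eqref{resolidYj}.
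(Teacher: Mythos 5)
Your proposal is correct and follows essentially the same route as the paper: Stone's formula for $(h(H)E(a,b)f,g)$, the conjugation identity $R(\lambda-i\varepsilon,H)=\overline{R(\lambda+i\varepsilon,H)}$ to produce $-\tfrac1\pi\,\Im K_k$, insertion of the kernel representation \eqref{resolvantformulaYi} valid on $Y_k$, and dominated convergence (the paper uses the bound $|K_k(x,y,\lambda+i\varepsilon)|\le C/\sqrt{a}$, you use uniform continuity up to the real axis plus compact supports) to pass to the limit $\varepsilon\to0$. Your extra verification that $K_k(x,x',\lambda-i0)=\overline{K_k(x,x',\lambda+i0)}$ via the branch $z\to-\sqrt{\lambda}$ and the reality of the coefficients is a more explicit rendering of the same step, not a different argument.
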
}
%%%%%%%%%%%%%%%%%%%%%%%%%%%%%%%%%%%%%%%%%%%%%%%%%%%%%%%%%%%%%%%%%%%%%%%%%%%%%%

%%%%%%%%%%%%%%%%%%%%%%%%%%%%%%%%%%%%%%%%%%%%%%%%%%%%%%%%%%%%%%%%%%%%%%%%%%%%%%
\sn{
\begin{proof}
First by
Stone's formula, we have (see for instance     Lemma~3.13 of \cite{fam2}
or  Proposition 4.5 of \cite{FAMetall10})
\[
(h(H)E(a,b)f, g)_{{\mathcal H}}=\lim_{\delta\rightarrow 0}\lim_{\varepsilon\rightarrow 0}
\frac{1}{2i\pi}
\left( \int_{a+\delta}^{b-\delta}  h(\lambda) 
\left[ R(\lambda-i\varepsilon, H)-R(\lambda+i\varepsilon, H)\right]  d\lambda   f,g\right)_{{\mathcal H}}.
\]
%First using \rfb{splitkernel}, we can write
%\[
%(h(H)E(a,b)f, g)_{{\mathcal H}}=I_c+I_p,
%\]
%where
%\beqs
%I_c&=&\lim_{\delta\rightarrow 0}\lim_{\varepsilon\rightarrow 0}
%\frac{1}{2i\pi}
%\left( \int_{a+\delta}^{b-\delta} h(\lambda)\left[ R_c(\lambda-i\varepsilon, H)-R_c(\lambda+i\varepsilon, H)\right]  d\lambda   f,g\right)_{{\mathcal H}},\\
%I_p&=&\lim_{\delta\rightarrow 0}\lim_{\varepsilon\rightarrow 0}
%\frac{1}{2i\pi}
%\left( \int_{a+\delta}^{b-\delta} h(\lambda)\left[  R_p(\lambda-i\varepsilon, H)-R_p(\lambda+i\varepsilon, H)\right]  d\lambda   f,g\right)_{{\mathcal H}}.
%\eeqs
%where $R_p$ (resp. $R_c$) is the operator corresponding to the kernel $K_p$ (resp. $K_c$).
%
As $R(\lambda-i\varepsilon, H)=\overline{R(\lambda+i\varepsilon, H)}$, we can write \ali{thanks to \eqref{resolvantformulaYi}}
\beqs
(h(H)E(a,b)f, g)_{{\mathcal H}}&=&-\frac{1}{\pi}\lim_{\delta\rightarrow 0}\lim_{\varepsilon\rightarrow 0}
\left( \int_{a+\delta}^{b-\delta}   h(\lambda) \Im R(\lambda+i\varepsilon, H)   d\lambda     f,g\right)_{{\mathcal H}}
\\
&=&-\frac{1}{\pi}\lim_{\delta\rightarrow 0}\lim_{\varepsilon\rightarrow 0}
  \int_{a+\delta}^{b-\delta} h(\lambda)(\Im R(\lambda+i\varepsilon, H)      f,g)_{{\mathcal H}}
  d\lambda
  \\
  &=&-\frac{1}{\pi}\lim_{\delta\rightarrow 0}\lim_{\varepsilon\rightarrow 0}
  \int_{a+\delta}^{b-\delta}h(\lambda)
\ali{\Big[}
  \int_{\mathcal R} 
\ali{\Big(}
  \int_{e_k}\Im K_k(x,x',\lambda+i\varepsilon)      f_k(x') 
  dx'
\ali{\Big)} 
  g(x) 
  dx
\ali{\Big]}  
  d\lambda.
\eeqs
At this stage, we take advantage of \ali{\eqref{kernel22Yj}, \eqref{kernel21Yj} and \eqref{kernel12Yj}} to deduce that
\[
K_k(x,y, \lambda+i\varepsilon)\longrightarrow K_k(x,y, \lambda), \hbox{ as } \varepsilon\longrightarrow0.
\]
 Furthermore  we see
 that
 \[
| K_k(x,y, \lambda+i\varepsilon)|\leq \frac{C}{\sqrt{|\lambda+i\varepsilon|}}\leq \frac{C}{\sqrt{a}},
\forall \lambda\in (a,b), x \in e_k,y \in {\mathcal R},
\]
for some positive constant $C$ independent of $x,y$.
 This allows to pass to the limit in $\varepsilon\longrightarrow0$ by using the convergence dominated theorem
 to obtain that
 \beqs
\lim_{\delta\rightarrow 0}
\lim_{\varepsilon\rightarrow 0}
  \int_{a+\delta}^{b-\delta}h(\lambda)
\ali{\Big[}
  \int_{\mathcal R} 
\ali{\Big(}
  \int_{e_k}\Im K_k(x,x',\lambda+i\varepsilon)      f_k(x') 
  dx'
\ali{\Big)} 
  g(x) 
  dx
\ali{\Big]}  
  d\lambda.
\\
  =\int_{a}^{b}h(\lambda)
\ali{\Big[}
  \int_{\mathcal R} 
\ali{\Big(} 
  \int_{e_k}\Im K_k(x,x',\lambda)     f_k(x') 
  dx' 
\ali{\Big)}
  g(x) 
  dx
\ali{\Big]}
  d\lambda.
  \eeqs
\end{proof}
}

\sn{Similar arguments yield.
\begin{theorem} \label{res.idYB}
Let the assumptions \ref{lassumptionserge} and \ref{lassumptionsergeYB}  be satisfied. 
Take $f\in Y_B$ and  $g \in {\mathcal H}$ with a compact support and let $0< a < b < + \infty$. Then for any holomorphic  function $h$   on the complex plane, we have
\beq\label{resolidYB}
\nonumber
( h(H) E(a,b)f,g)_{\mathcal H}&=&  -
\frac{1}{\pi}
  \int\limits_{\mathcal R}    \left(\int_{(a,b) } h(\lambda)
        \int\limits_{\mathcal R_B}  
         \Im K_B(x,x',\lambda)  f_k(x')\; dx'
           \; d\lambda\right) \bar g(x)dx,
\eeq
where for all $\lambda>0$, $K_B(x,x',\lambda)$ was defined  before.
\end{theorem}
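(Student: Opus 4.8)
The plan is to mirror the proof of Theorem \ref{res.idYj} almost verbatim, replacing the single outer edge $e_k$ by the compact core $\mathcal{R}_B$ and invoking the resolvent representation \eqref{resolvantformulaYB} valid on $Y_B$ instead of \eqref{resolvantformulaYi}. First I would start from Stone's formula in exactly the same form,
\[
(h(H)E(a,b)f, g)_{{\mathcal H}}=\lim_{\delta\rightarrow 0}\lim_{\varepsilon\rightarrow 0}
\frac{1}{2i\pi}
\left( \int_{a+\delta}^{b-\delta}  h(\lambda)
\left[ R(\lambda-i\varepsilon, H)-R(\lambda+i\varepsilon, H)\right]  d\lambda\, f,g\right)_{{\mathcal H}},
\]
and use the reflection identity $R(\lambda-i\varepsilon,H)=\overline{R(\lambda+i\varepsilon,H)}$ together with \eqref{resolvantformulaYB} to rewrite the bracket as $-\frac{2}{\pi}\Im R(\lambda+i\varepsilon,H)$ acting through the kernel $K_B$. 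Since $f\in Y_B$ is supported on $\mathcal{R}_B$ (being orthogonal to every $Y_j$, $j\in I_U$, forces $f_k=0$ on all outer edges), the inner integral over $x'$ collapses from $\mathcal{R}$ to $\mathcal{R}_B$, which is the source of the $\int_{\mathcal{R}_B}$ appearing in the statement.

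The analytic heart of the argument is the interchange of the $\varepsilon\to 0$ limit with the $\lambda$-integration, carried out by dominated convergence exactly as before. Here I would rely on Assumption \ref{lassumptionsergeYB}: by \eqref{hypoYBcons} the factor $\mathcal{M}_p$ carrying the poles at $\pm\lambda_m$ is annihilated on $Y_B$, so the kernel $K_B$ is built only from $d_c(z)$ in the denominator. Since $d_c(z)\neq 0$ for all $z\in\RR$ (part of Assumption \ref{lassumptionserge}), the pointwise convergence
\[
K_B(x,y,\lambda+i\varepsilon)\longrightarrow K_B(x,y,\lambda),\qquad \varepsilon\to 0,
\]
holds for every $\lambda\in(a,b)$, and the explicit forms \eqref{kernel11B}, \eqref{kernel11bb}, \eqref{kernel21B} give a uniform bound of the type $|K_B(x,y,\lambda+i\varepsilon)|\leq C/\sqrt{a}$ on the compact interval $[a,b]$, since each exponential $e^{iz(\sigma x+\tau y)}$ stays bounded for $\Im z\geq 0$ and $d_c$ is bounded away from zero there. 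With the compact support of $g$ and the finiteness of $\mathcal{R}_B$ supplying an integrable dominating function, dominated convergence removes $\varepsilon$, and the $\delta\to 0$ limit is then immediate since the resulting integrand is continuous on $[a,b]$.

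The one point requiring slight care, and the place I expect the main (if modest) obstacle, is the bookkeeping of which kernel pieces actually contribute once $f$ is restricted to $Y_B$. Because $x'$ ranges only over $\mathcal{R}_B$, only the representations with $y\in e_{k'}$, $k'\in I_B$ are relevant, namely \eqref{kernel11B} (for $x$ also in the core), \eqref{kernel11bb} (for $x$ in a different core edge), and \eqref{kernel21B} (for $x$ on an outer edge), while the outer-to-outer pieces play no role; I would note this explicitly so that the uniform bound and the continuity claim are justified on precisely the needed components. Everything else is identical to the proof of Theorem \ref{res.idYj}, which is why the terse phrase \emph{``Similar arguments yield''} is adequate, and I would simply record the three displayed steps (Stone's formula, the reflection identity producing $\Im K_B$, and the dominated-convergence passage to the limit) and conclude.
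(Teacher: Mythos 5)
Your proposal is correct and follows exactly the route the paper intends: the paper proves this theorem only by the remark ``Similar arguments yield,'' referring to the proof of Theorem \ref{res.idYj}, and your write-up is precisely that adaptation (Stone's formula, the reflection identity giving $\Im K_B$, and dominated convergence justified by $d_c\neq 0$ on $\RR$ via Assumptions \ref{lassumptionserge} and \ref{lassumptionsergeYB}). Your added observations --- that $f\in Y_B$ vanishes on the outer edges so the inner integral collapses to $\mathcal{R}_B$, and that only the kernel pieces \eqref{kernel11B}, \eqref{kernel11bb}, \eqref{kernel21B} contribute --- are accurate and make explicit what the paper leaves implicit.
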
}

\sn{The previous results directly lead to the expression of the resolution of the identity $E$ of $H$ under the aditional  assumptions  \ref{lassumptionsergeYj} and \ref{lassumptionsergeYB}.
\begin{theorem} \label{res.id}
Let the assumptions \ref{lassumptionserge}, \ref{lassumptionsergeYj} and \ref{lassumptionsergeYB} be satisfied. 
Take $f, g \in {\mathcal H}$ with a compact support and let $0< a < b < + \infty$. Then for any holomorphic  function $h$   on the complex plane, we have
\beq\label{resolid}
\nonumber
( h(H) E(a,b)f,g)_{\mathcal H}&=&  -\sum_{k\in I_U}
\frac{1}{\pi}
  \int\limits_{\mathcal R}    \Big(\int_{(a,b) } h(\lambda)
        \int\limits_{e_k}  
          \Im K_k(x,x',\lambda) (\Pi_k f)_k(x')\; dx'
           \; d\lambda\Big) \bar g(x)dx
           \\
\nonumber
           &-&   
\frac{1}{\pi}
  \int\limits_{\mathcal R}    \Big(\int_{(a,b) } h(\lambda)
        \int\limits_{\mathcal R_B}  
          \Im K_B(x,x',\lambda) (\Pi_B f)(x')\; dx'
           \; d\lambda\Big) \bar g(x)dx          
 \\
           &+&\sum_{m\in \mathbb{N}: a<\lambda_{m}^2<b}  h(\lambda_{m}^2) (f, \ali{\varphi^{(m)}})_{{\mathcal H}} (\ali{\varphi^{(m)}}, g)_{\mathcal H}.
\eeq
\end{theorem}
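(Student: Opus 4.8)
The plan is to combine the two preceding spectral decomposition theorems with the eigenvector contributions coming from $Y_D$, using the orthogonal splitting $L^2(\mathcal{R})=(\oplus_{i\in I_U} Y_i) \oplus_\perp Y_D\oplus_\perp Y_B$ introduced before. First I would write any compactly supported $f$ as $f=\sum_{k\in I_U}\Pi_k f+\Pi_D f+\Pi_B f$ and exploit the linearity of the map $f\mapsto (h(H)E(a,b)f,g)_{\mathcal H}$ to treat each piece separately. For the pieces $\Pi_k f\in Y_k$ I would apply Theorem \ref{res.idYj}, and for $\Pi_B f\in Y_B$ I would apply Theorem \ref{res.idYB}; these directly produce the first two integral terms in \eqref{resolid}. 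The only genuinely new ingredient is the contribution of $\Pi_D f\in Y_D$, which accounts for the discrete sum over the eigenvalues $\lambda_m^2$.

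For the $Y_D$ part I would again start from Stone's formula, but now for $f=\Pi_D f=\sum_m (f,\varphi^{(m)})_{\mathcal H}\,\varphi^{(m)}$. Since each $\varphi^{(m)}$ is an eigenvector of $H$ with eigenvalue $\lambda_m^2$ (as established right before Lemma \ref{lspectrediscret} and in Corollary \ref{lemma1}), the spectral calculus gives $h(H)E(a,b)\varphi^{(m)}=h(\lambda_m^2)\varphi^{(m)}$ when $a<\lambda_m^2<b$ and $0$ otherwise, because $E(a,b)$ restricted to $Y_D$ is just the sum of the spectral projections onto the eigenspaces with $\lambda_m^2\in(a,b)$. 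Pairing against $g$ yields precisely $\sum_{m:\,a<\lambda_m^2<b} h(\lambda_m^2)(f,\varphi^{(m)})_{\mathcal H}(\varphi^{(m)},g)_{\mathcal H}$, the third term of \eqref{resolid}. I would note that the endpoints $a,b$ can be chosen to avoid the discrete set $\{\lambda_m^2\}$, or else the jump convention is absorbed by the strict inequalities; since $a,b$ are arbitrary with $0<a<b<+\infty$ this is harmless.

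The main subtlety, and the step I expect to be the real obstacle, is cross-term orthogonality: one must verify that applying Theorem \ref{res.idYj} to $\Pi_k f$ and Theorem \ref{res.idYB} to $\Pi_B f$ does not inadvertently reintroduce any pole contribution at the $\lambda_m$, and conversely that the continuous-spectrum kernels $K_k$ and $K_B$ contribute nothing to the eigenvector sum. This is exactly what is guaranteed by Corollary \ref{lemma:annulationMp} together with assumptions \ref{lassumptionsergeYj} and \ref{lassumptionsergeYB}: on $Y_k$ the term $\mathcal{M}_p(z)RF(z,f)$ produces no pole (equation \eqref{hypoYjcons}), and on $Y_B$ it vanishes identically (equation \eqref{hypoYBcons}), so $K_k$ and $K_B$ are genuinely continuous up to the real axis on those subspaces. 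Hence the only poles, which generate the eigenvalue sum, arise solely from $\Pi_D f$, and the three contributions are mutually orthogonal and add up cleanly.

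Finally I would assemble the three contributions by linearity. Because the decomposition is orthogonal and each theorem already handles the relevant domination and limit interchange (the dominated convergence argument in the proof of Theorem \ref{res.idYj}, valid uniformly on $(a,b)$ via the bound $|K_k(x,y,\lambda+i\varepsilon)|\le C/\sqrt{a}$, and its analogue for $K_B$), no additional analytic work is needed: summing \eqref{resolidYj} over $k\in I_U$ applied to $\Pi_k f$, adding \eqref{resolidYB} applied to $\Pi_B f$, and adding the eigenvector sum from $\Pi_D f$ reproduces \eqref{resolid} verbatim. The compact-support hypothesis on $f$ and $g$ ensures all the integrals over the infinite edges converge and that $f$ lies in the relevant form domain, so the term-by-term application of the earlier theorems is justified.
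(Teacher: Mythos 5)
Your proposal is correct and follows essentially the same route as the paper: the paper likewise splits $f=\sum_{k\in I_U}\Pi_k f+\Pi_B f+\Pi_D f$, applies Theorems \ref{res.idYj} and \ref{res.idYB} to the first two groups of terms, and handles $\Pi_D f$ by the spectral calculus for the eigenvectors $\varphi^{(m)}$, exactly as you do. The extra remarks you make on cross-term orthogonality and dominated convergence are already built into the two cited theorems, so no further justification is needed.
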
}
%%%%%%%%%%%%%%%%%%%%%%%%%%%%%%%%%%%%%%%%%%%%%%%%%%%%%%%%%%%%%%%%%%%%%%%%%%%%%%

%%%%%%%%%%%%%%%%%%%%%%%%%%%%%%%%%%%%%%%%%%%%%%%%%%%%%%%%%%%%%%%%%%%%%%%%%%%%%%
\sn{\begin{proof}
We split up
$f$ as follows:
\[
f=\sum_{k\in I_U} \Pi_kf+\Pi_Bf+\ali{\Pi_D}f.
\]
For the first two terms, we can apply the two previous theorems. For the last term,   as $H \varphi^{(n)}= \lambda_m^2\varphi^{(n)}$ for all $m\in \mathbb{N}$, we directly deduce that
\[
( h(H) E(a,b)\ali{\Pi_D} f,g)_{\mathcal H}=
\sum_{m\in \mathbb{N}: a<\lambda_{m}^2<b}  h(\lambda_{m}^2) (f, \ali{\varphi^{(m)}})_{{\mathcal H}} (\ali{\varphi^{(m)}}, g)_{\mathcal H}.
\]
Hence the result.
\end{proof}
}

As direct consequences of the previous Theorem, we get the next results.
\begin{corollary} \label{absence_sing_spec}If the \sn{assumptions \ref{lassumptionserge}, \ref{lassumptionsergeYj} and \ref{lassumptionsergeYB} hold}, then the operator $H$ has no singular \ali{continuous} spectrum and
\[
P_{ac} f=f-\Pi_{D}f, \forall f\in {\mathcal H},
\]
where we recall that
\[
\Pi_{D} f=\sum_{m\in \mathbb{N}} (f, \varphi^{(m)})_{{\mathcal H}} \varphi^{(m)}.
\]
\end{corollary}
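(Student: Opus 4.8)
The plan is to read off the spectral content of $H$ directly from the resolution-of-identity formula \eqref{resolid} established in Theorem \ref{res.id}. First I would recall the abstract fact that for a self-adjoint operator the spectral measure decomposes as $E = E_{ac} \oplus E_{sc} \oplus E_{pp}$, so that it suffices to identify the three pieces appearing in \eqref{resolid} with these components. The essential observation is that the map $(a,b) \mapsto (h(H)E(a,b)f,g)_{\mathcal H}$ with $h\equiv 1$ gives, up to the usual polarization, the spectral measure $\langle E(\cdot)f,g\rangle$, and \eqref{resolid} exhibits this measure as a sum of two integrals against $d\lambda$ (Lebesgue measure on $(0,\infty)$) plus a sum of point masses at the eigenvalues $\lambda_m^2$.

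Next I would argue that the two integral terms in \eqref{resolid}, namely those carrying the kernels $K_k$ and $K_B$ through the projections $\Pi_k$ and $\Pi_B$, are absolutely continuous with respect to Lebesgue measure. This is because, under assumptions \ref{lassumptionsergeYj} and \ref{lassumptionsergeYB}, the kernels $K_k$ and $K_B$ have $d_c(z)$ (never $d(z)$) in the denominator on the relevant subspaces, and by Assumption \ref{lassumptionserge} one has $d_c(\lambda)\neq 0$ for all real $\lambda$; hence $\Im K_k(x,x',\lambda)$ and $\Im K_B(x,x',\lambda)$ are continuous in $\lambda$ on every compact $[a,b]\subset(0,\infty)$, with the uniform bound $|K_k(x,y,\lambda+i\varepsilon)|\le C/\sqrt{a}$ already used in the proof of Theorem \ref{res.idYj}. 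Consequently, for $f\in \big(\oplus_{i\in I_U}Y_i\big)\oplus_\perp Y_B$ and compactly supported $g$, the measure $\langle E(\cdot)f,g\rangle$ has a density in $\lambda$, so these contributions lie in the absolutely continuous subspace. The point-mass term, supported on the discrete set $\{\lambda_m^2\}$, is exactly the contribution of $\Pi_D f$ and realizes the pure point part.

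I would then assemble these facts. Since by construction $L^2(\mathcal{R}) = \big(\oplus_{i\in I_U}Y_i\big)\oplus_\perp Y_D \oplus_\perp Y_B$, every $f$ splits as $f=\sum_{k}\Pi_k f+\Pi_B f+\Pi_D f$, and the two integral terms show that $f-\Pi_D f$ has a spectral measure absolutely continuous on $(0,\infty)$, whereas $\Pi_D f$ contributes only point masses at the $\lambda_m^2$. This gives $\operatorname{Ran}(P_{ac})\supseteq \overline{(\oplus_i Y_i)\oplus_\perp Y_B}$ and $\operatorname{Ran}(P_{pp})\supseteq Y_D$; combined with Corollary \ref{lemma1}, which identifies $\sigma_{pp}(H)=S_d$ with eigenvectors spanning $Y_D$, we obtain $P_{pp}=\Pi_D$ and hence $P_{ac}f=f-\Pi_D f$. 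Because $P_{ac}+P_{sc}+P_{pp}=\mathbb{I}$ and $P_{ac}+P_{pp}=\mathbb{I}$ already, we conclude $P_{sc}=0$, i.e.\ $H$ has no singular continuous spectrum. A small technical point to dispatch is the reduction from compactly supported $f,g$ to general $f,g\in\mathcal H$: since compactly supported functions are dense and the spectral projections are bounded, the identity $P_{ac}f=f-\Pi_D f$ extends by continuity.

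The main obstacle I anticipate is the rigorous passage from the formula \eqref{resolid}, which is stated only for compactly supported $f$ and $g$ and for holomorphic $h$, to a genuine statement about the Lebesgue decomposition of the spectral measure. One must verify that the absolute continuity of $\langle E(\cdot)f,g\rangle$ for such $f,g$ indeed forces $f-\Pi_D f\in \operatorname{Ran}(P_{ac})$, and this requires the standard but slightly delicate fact that a vector lies in the absolutely continuous subspace iff $\langle E(\cdot)f,f\rangle$ is absolutely continuous; the compact-support and density reductions, together with boundedness of $\Im K_k,\Im K_B$ uniformly on compacta away from $\lambda=0$, are exactly what make this rigorous. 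The behaviour near the threshold $\lambda=0$ (where the bound $C/\sqrt{a}$ degenerates) must also be handled, but since $0\notin S_d$ and $0$ is not an eigenvalue by Corollary \ref{lemma1}, a single point carries no spectral mass and can be excluded without affecting the decomposition.
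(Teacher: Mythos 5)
Your argument is correct and follows exactly the route the paper intends: the corollary is stated there as a direct consequence of Theorem \ref{res.id}, and your proposal simply fleshes out that deduction by identifying the two integral terms of \eqref{resolid} (with kernels involving only the non-vanishing $d_c$) as the absolutely continuous part and the point-mass sum as the pure point part carried by $Y_D$. The technical points you flag — polarization/density to pass from compactly supported $f,g$ to general vectors, and the harmlessness of the threshold $\lambda=0$ since $0$ is not an eigenvalue — are the right ones and are handled correctly.
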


\section{The dispersive estimates} \label{proofm}

\begin{theorem} \label{mainresult}
In addition to  the \sn{assumptions \ref{lassumptionserge}
and \ref{lassumptionsergeYj}}, suppose that
there exists a positive constant $\alpha$ such that
\begin{equation}\label{lowerbddc}
|d_c(\mu)|\geq \alpha, \forall \mu\in \mathbb{R}.
\end{equation}
Then  there a positive constant  $C$ such that
\sn{for all $k\in I_U$}, and $t
\neq 0$, \be \label{dispest} \sn{\left\|e^{-itH} \sn{f}
\right\|_{L^\infty({\mathcal R})} \leq C \,
\left|t\right|^{- 1/2}
\|f\|_{
L^1({\mathcal R})}, 
\forall f\in Y_k\cap L^1({\mathcal R}),}\ee where in a natural way
$L^1 (\mathcal{R}) = \ds \ali{\prod_{k \in I}} L^1(e_k),$ and
$L^\infty (\mathcal{R}) = \ds \ali{\prod_{k \in I}} L^\infty(e_k).$
\end{theorem}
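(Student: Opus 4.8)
The plan is to start from the resolution-of-the-identity formula on $Y_k$ furnished by Theorem \ref{res.idYj}, applied with the holomorphic function $h(\lambda)=e^{-it\lambda}$. For $f\in Y_k\cap L^1(\mathcal R)$ and $g\in\mathcal H$ with compact support, this gives
\[
(e^{-itH}f,g)_{\mathcal H}=-\frac{1}{\pi}\int_{\mathcal R}\Big(\int_0^{\infty} e^{-it\lambda}\int_{e_k}\Im K_k(x,x',\lambda)\,f_k(x')\,dx'\,d\lambda\Big)\bar g(x)\,dx,
\]
where the $\lambda$-integral runs over $(0,\infty)$ after letting $a\to0$, $b\to\infty$. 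By duality it suffices to bound, for each fixed $x$, the inner double integral by $C|t|^{-1/2}\|f\|_{L^1}$ uniformly, so that taking the supremum over $x$ yields the $L^1\to L^\infty$ estimate. The first step is therefore to change variables $\lambda=z^2$ (so $d\lambda=2z\,dz$, $z\in(0,\infty)$) and insert the explicit kernels \eqref{kernel22Yj}, \eqref{kernel21Yj}, \eqref{kernel12Yj}. Each kernel has the schematic form $\frac{1}{2iz}\big(e^{iz|x-x'|}+\text{(holomorphic amplitude)}\,e^{iz(\sigma x+x')}\big)$, where the amplitude is a ratio of a trigonometric polynomial in the $e^{\pm izr_j^\pm\ell_j}$ over $d_c(z)$.

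The second and decisive step is to exploit hypothesis \eqref{lowerbddc}, $|d_c(\mu)|\ge\alpha$ on $\mathbb R$. Since $d_c$ is a trigonometric polynomial in the $e^{\pm izr_j^\pm\ell_j}$ that is bounded away from zero on the real axis, its reciprocal $1/d_c(z)$ is a bounded quasiperiodic function of $z\in\mathbb R$, and by a theorem of harmonic analysis on quasiperiodic functions (as announced in the introduction) it admits an absolutely convergent generalized Fourier expansion $1/d_c(z)=\sum_n a_n e^{i\omega_n z}$ with $\sum_n|a_n|<\infty$ and real frequencies $\omega_n$. Multiplying by the trigonometric-polynomial numerators, each amplitude $c^c_{\cdot}(z)/d_c(z)$ likewise expands as an absolutely summable series $\sum_n b_n e^{i\theta_n z}$ with $\sum_n|b_n|<\infty$. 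Substituting these expansions turns every contribution to the kernel into an absolutely convergent superposition of \emph{pure oscillatory phases} of the form $e^{iz\phi}$ times the common factor $\frac{1}{2iz}$, with no remaining $z$-dependent amplitude. This is precisely the structure that lets van der Corput's lemma be applied without an amplitude factor.

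The third step is the stationary-phase estimate. After the substitution $\lambda=z^2$, the free terms $\frac1{2iz}e^{iz|x-x'|}$ and each series term $\frac{b_n}{2iz}e^{iz(\phi_n)}$, paired with the evolution factor $e^{-itz^2}$, produce integrals of the form $\int_0^\infty e^{-it z^2}e^{iz\beta}\frac{2z}{2iz}\,dz=\frac1i\int_0^\infty e^{-it z^2+iz\beta}\,dz$, i.e. Gaussian-type oscillatory integrals with quadratic phase $-tz^2+\beta z$ whose second derivative in $z$ is $-2t$, uniformly nonzero. Van der Corput's lemma (or direct evaluation of the resulting Fresnel integral) then bounds each such integral by $C|t|^{-1/2}$ with a constant independent of the real parameter $\beta$ and of $x,x'$. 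Summing over the Fourier modes costs only the finite factor $\sum_n|b_n|$ thanks to absolute convergence, and the integration in $x'$ against $f_k$ costs $\|f\|_{L^1}$; combining these gives the claimed bound $C|t|^{-1/2}\|f\|_{L^1}$ uniformly in $x$.

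I expect the main obstacle to lie in the harmonic-analysis step: one must verify carefully that $1/d_c$ really is quasiperiodic and that the generalized Fourier series converges \emph{absolutely} (so that the interchange of summation with the $z$-integral and the mode-by-mode van der Corput bound are legitimate), using that $d_c$ is bounded below on $\mathbb R$ and is a finite trigonometric polynomial with rationally structured frequencies $r_j^\pm\ell_j$. A secondary technical point is the behaviour near $z=0$ (equivalently $\lambda\to0$): the factor $1/(2iz)$ is singular, so one must check that the oscillatory integral $\int_0^\infty e^{-itz^2+i\beta z}\,dz$ is genuinely finite and of order $|t|^{-1/2}$ there, which the quadratic phase guarantees (the $z$-measure from $d\lambda=2z\,dz$ exactly cancels the $1/z$ singularity), and to justify the limits $a\to0$, $b\to\infty$ together with the density of compactly supported $g$.
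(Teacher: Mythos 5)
Your proposal is correct and follows essentially the same route as the paper: apply Theorem \ref{res.idYj} with $h(\lambda)=e^{-it\lambda}$, substitute $\lambda=\mu^2$, expand $1/d_c$ as an absolutely convergent generalized Fourier series of a quasi-periodic function bounded away from zero (the paper cites Corollary 1 of \cite[p.~164]{gelfand} for exactly this), and then apply van der Corput's lemma term by term to the resulting pure phases, summing via $\sum|d_\lambda|<\infty$. The obstacles you flag (absolute convergence of the expansion, the $1/(2iz)$ factor being cancelled by the Jacobian $2z\,dz$, and the uniformity in $a,b$ needed to pass to the limit) are precisely the points the paper's proof handles, so there is no gap.
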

\begin{proof}
For any $0<a<b<\infty$,
by Theorem \sn{\ref{res.idYj}}
 for any $x,y\in {\mathcal R}$, we have found the following
 expression for the kernel of the operator
 $e^{-itH}\mathbb{I}_{(a,b)}$ \sn{restricted to $Y_k$, $k\in I_U$}:
$$
\ali{\Big[}
 \int_0^{+ \infty} e^{-it\lambda} \mathbb{I}_{(a,b)}(\lambda) E_{ac}(d\lambda)\ali{\Pi_k}
\ali{\Big]} 
(x,y) 
 =
-\frac{1}{\pi}\int_a^b e^{-it\lambda} \mathbb{I}_{(a,b)}(\lambda) \Im \sn{K_k}(x,y,\lambda) d\lambda,
$$
and by the change of variables $\lambda=\mu^2$, we get
$$
\ali{\Big[}
 \int_0^{+ \infty} e^{-it\lambda} \mathbb{I}_{(a,b)}(\lambda) E_{ac}(d\lambda)\ali{\Pi_k}
\ali{\Big]} 
(x,y) 
=
-\frac{2}{\pi}\int_{\sqrt{a}}^{\sqrt{b}} e^{-it\mu^2}
\mathbb{I}_{(a,b)}(\mu^2) \Im \sn{K_k}(x,y,\mu^2) \mu \
 d\mu.
$$
Now recalling the definition of $\sn{K_k}$, we have to distinguish between the following  cases:
\begin{enumerate}
\item
If $x, y\in \sn{e_k}$, then
$$
2i\mu K_c(x,y,\mu^2)=e^{i\mu|x-y|} + \frac{c^c_k (\mu)}{d_c(\mu)} \, e^{i\mu (x+y)}, k \in I_U.
$$
 Hence in that case, we have to estimate
 \[
\left|
\int_{\sqrt{a}}^{\sqrt{b}} e^{-it\mu^2}   e^{i\mu|x-y|}   d\mu\right|,
\]
and
\[
\left|
\int_{\sqrt{a}}^{\sqrt{b}} e^{-it\mu^2}
 \frac{c^c_k (\mu)}{d_c(\mu)} \,  e^{i\mu (x+y)} d\mu\right|.
\]
The first term is directly estimated by van der Corput Lemma
\cite[Proposition 2, p. 332]{Zygmung:99}:
 \[
\left| \int_{\sqrt{a}}^{\sqrt{b}} e^{-it\mu^2}   e^{i\mu|x-y|}
d\mu\right|\leq   \frac{4 \sqrt{2}}{\sqrt{t}}, \,
\forall \, t
> 0.
\]

For the second term, as $c^c_k (\mu)$ is \sn{a 
 polynomials in the variables  $e^{iz r_j^+\ell_j}$, $e^{-iz r_j^-\ell_j}$, $j\in I_B$,
 % \ali{and $e^{\pm iz}$} 
 for some positive $r_j^+, r_j^-\in \mathbb{Q}$},
with coefficients independent of $\mu$, we are reduced to estimate
\begin{equation}\label{eqsecondterm}
\left|\int_{\sqrt{a}}^{\sqrt{b}} e^{-it\mu^2}
 \frac{1}{d_c(\mu)} \, e^{i\mu (x+y+\ell)} d\mu\right|
\end{equation}
where $\ell$ can take \sn{a finite number of  values   $\ali{r} \ell_k$, $k\in I_B$, with $\ali{r}\in \mathbb{Q}$.}
Now we exploit the assumption \eqref{lowerbddc} and the fact that 
$d_c$ is a quasi-periodic function to deduce by Corollary 1 of \cite[p. 164]{gelfand} that (see also
\cite[Lemma 3.4]{banica})
\[
\frac{1}{d_c(\mu)}=\sum_{\lambda\in D} d_\lambda e^{i\mu \lambda},
\]
where $D$ is a discrete set of real numbers and $d_\lambda\in \mathbb{C}$ for all $\lambda\in D$ such that
\[
\sum_{\lambda\in D} |d_\lambda|<\infty.
\]
This property guarantees that
\[
\left|\int_{\sqrt{a}}^{\sqrt{b}} e^{-it\mu^2}
 \frac{1}{d_c(\mu)} \, e^{i\mu (x+y+\ell)} d\mu\right|
 \leq \sum_{\lambda\in D} |d_\lambda| \left|\int_{\sqrt{a}}^{\sqrt{b}} e^{-it\mu^2}
  e^{i\mu (x+y+\ell+\lambda)} d\mu\right|.
 \]
Again this right-hand side is  estimated by van der Corput Lemma to conclude that
\[
\left|\int_{\sqrt{a}}^{\sqrt{b}} e^{-it\mu^2}
 \frac{1}{d_c(\mu)} \, e^{i\mu (x+y+\ell)} d\mu\right|
 \leq  \frac{4 \sqrt{2}}{\sqrt{t}}
 \sum_{\lambda\in D} |d_\lambda| .
 \]
\medskip
All together we have proved that 
\be
\label{term11}
\left|
\ali{\Big[}
 \int_0^{+ \infty} e^{-it\lambda} \mathbb{I}_{(a,b)}(\lambda) E_{ac}(d\lambda)\ali{\Pi_k}
\ali{\Big]} 
(x,y) 
\right|
\leq
\frac{C}{\sqrt{t}}, \, \forall \, \sn{x , \, y \in e_k}, t > 0,
\ee
where $C$ is a positive constant independent of $x,y,t, a, b$.
\item
If \sn{$x\in \mathcal{R}_U\setminus\{e_k\}$ and $y\in e_k$, then
$$
2i\mu K_k (x,y,\mu^2)= \ds 
\frac{c^c_{k,k^\prime,\sigma} (\mu)}{d_c(\mu)} \, e^{i \mu (x + y)}, 
$$
and the} same arguments as before imply that
\be\label{term21}
\left|\ali{\Big[}
 \int_0^{+ \infty} e^{-it\lambda} \mathbb{I}_{(a,b)}(\lambda) E_{ac}(d\lambda)\ali{\Pi_k}
\ali{\Big]} 
(x,y) \right|
\leq 
\frac{C}{\sqrt{t}}, \forall x\in \sn{ \mathcal{R}_U\setminus\{e_k\},y\in e_k,} \ t>0,
\ee
where $C>0$ is independent of $x,y,t, a, b$.
\item
\sn{If $x\in e_{k'},$ with  $k'\in I_B,$ and $y\in e_k$, then
$$
2i\mu K_c(x,y,\mu^2)= \ds \sum_{\sigma=\pm}   \frac{c^c_{k',k,\sigma}(\mu)}{d_c(\mu)} \, e^{i \mu (x + \tau y)}, 
$$
and we} deduce as before that

\be\label{term12}
\left|
\ali{\Big[}
 \int_0^{+ \infty} e^{-it\lambda} \mathbb{I}_{(a,b)}(\lambda) E_{ac}(d\lambda)\ali{\Pi_k}
\ali{\Big]} 
(x,y) 
\right|
\leq 
\frac{C}{\sqrt{t}}, \forall \sn{y\in e_k,x\in e_{k'}, k'\in I_B,} \, t>0,
\ee
where $C>0$ is independent of $x,y,t, a, b$.
\end{enumerate}

The estimates (\ref{term11}) to (\ref{term12}) directly imply 
that \eqref{dispest} holds.
\end{proof}

Without the assumption \eqref{lowerbddc}, we can prove the same decay but for data in a fixed frequency band, namely we have the following result:

\begin{theorem} \label{mainresultfrequency band}
Let the  \sn{assumptions \ref{lassumptionserge}
and \ref{lassumptionsergeYj}}
be satisfied.
Then  for all for  $ a<b<\infty$, there is a positive constant  $C$ (that may depends on $b$) such that
 \sn{for all $k\in I_U$ and } $t
\neq 0$, \be \label{dispestFB}\sn{ \left\|e^{-itH}\mathbb{I}_{(a,b)}(H) \sn{f}
\right\|_{L^\infty({\mathcal R})} \leq C \,
\left|t\right|^{- 1/2} \sn{\| f\|_{L^1({\mathcal R})}}, \forall f\in Y_k\cap L^1({\mathcal R})}. \ee
\end{theorem}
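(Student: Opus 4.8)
The plan is to run the argument of Theorem \ref{mainresult} essentially verbatim, replacing the single place where the global lower bound \eqref{lowerbddc} was used --- namely the absolutely convergent exponential expansion of $1/d_c$ --- by a van der Corput estimate that carries $1/d_c$ as an amplitude of bounded variation over the \emph{compact} frequency band. Concretely, I would first reproduce the opening of the proof of Theorem \ref{mainresult}: by Theorem \ref{res.idYj} the kernel of $e^{-itH}\mathbb{I}_{(a,b)}(H)\Pi_k$ equals $-\frac{1}{\pi}\int_a^b e^{-it\lambda}\,\Im K_k(x,y,\lambda)\,d\lambda$, and the substitution $\lambda=\mu^2$ (with $0<a<b<\infty$, as required for Theorem \ref{res.idYj}) turns every contribution into an integral over the compact interval $[\sqrt a,\sqrt b]$ of the form
\be\label{planintegral}
\int_{\sqrt a}^{\sqrt b} e^{-it\mu^2}\,\frac{c(\mu)}{d_c(\mu)}\,e^{i\mu(\sigma x+\tau y)}\,d\mu,\qquad \sigma,\tau\in\{\pm\},
\ee
where $c$ is one of the holomorphic functions $c^c_k, c^c_{k',k}, c^c_{k',k,\sigma}$ of \eqref{kernel22Yj}--\eqref{kernel12Yj}; the free term $e^{i\mu|x-y|}$ that occurs when $x,y\in e_k$ is handled exactly as in Theorem \ref{mainresult} by the amplitude-free van der Corput lemma of \cite{Zygmung:99}.

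The decisive observation is that the points $x,y$ enter \eqref{planintegral} only through the \emph{phase} $\varphi(\mu)=-t\mu^2+\mu(\sigma x+\tau y)$, whose second derivative $\varphi''(\mu)=-2t$ is constant, whereas the amplitude $\psi(\mu)=c(\mu)/d_c(\mu)$ does not depend on $x,y$. By Assumption \ref{lassumptionserge}, $d_c$ is holomorphic and nowhere vanishing on $\RR$, hence bounded below in modulus on the compact set $[\sqrt a,\sqrt b]$; consequently $\psi$ is of class $C^1$ there and
\[
\sup_{[\sqrt a,\sqrt b]}|\psi|+\int_{\sqrt a}^{\sqrt b}|\psi'(\mu)|\,d\mu\le C(a,b),
\]
a finite constant depending on $a,b$ (through $\min|d_c|$, $\max|d_c'|$ and the fixed trigonometric coefficients of $c$) but independent of $x,y,t$.

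Combining the amplitude-free van der Corput lemma of \cite{Zygmung:99} with one integration by parts in the amplitude $\psi$ (the standard device $\int_{\sqrt a}^{\sqrt b} e^{i\varphi}\psi = \psi(\sqrt b)\int_{\sqrt a}^{\sqrt b} e^{i\varphi} - \int_{\sqrt a}^{\sqrt b}\psi'(s)\big(\int_{\sqrt a}^{s} e^{i\varphi}\big)\,ds$), and using $|\varphi''|=2|t|$, I obtain, uniformly in $x,y$,
\[
\left|\int_{\sqrt a}^{\sqrt b} e^{i\varphi(\mu)}\psi(\mu)\,d\mu\right|\le \frac{C}{\sqrt{|t|}}\Bigl(\sup|\psi|+\int|\psi'|\Bigr)\le \frac{C(a,b)}{\sqrt{|t|}},\qquad t\ne0.
\]
Collecting these bounds over the finitely many values of $\sigma,\tau$ and the finitely many coefficient functions $c$ of \eqref{kernel22Yj}--\eqref{kernel12Yj}, I would reproduce the three case estimates \eqref{term11}, \eqref{term21}, \eqref{term12}, now with a constant allowed to depend on $a,b$, whence \eqref{dispestFB} follows exactly as in Theorem \ref{mainresult}.

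The only genuine obstacle, and the sole structural difference from Theorem \ref{mainresult}, is precisely this passage from the amplitude-free van der Corput lemma to its bounded-variation-amplitude version: in Theorem \ref{mainresult} the global hypothesis \eqref{lowerbddc} permitted expanding $1/d_c$ as an absolutely convergent exponential sum, thereby removing the amplitude altogether and yielding a constant uniform in the band, whereas here one must retain $1/d_c$ as an amplitude and control its total variation on $[\sqrt a,\sqrt b]$. This is harmless for a single fixed band, but it is exactly what forces the constant to depend on $b$ and clarifies why the uniform estimate of Theorem \ref{mainresult} genuinely requires \eqref{lowerbddc}.
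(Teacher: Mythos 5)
Your proposal is correct and follows essentially the same route as the paper: the paper likewise reduces to integrals of the form \eqref{eqsecondterm} over the compact band $[\sqrt a,\sqrt b]$, observes that $d_c$ is nonvanishing and smooth there, and invokes the bounded-variation-amplitude variant of van der Corput (\cite[Corollary, p.~334]{Zygmung:99}) to get the bound $\frac{4\sqrt 2}{\sqrt t}\bigl(\frac{1}{|d_c(\sqrt b)|}+\int_{\sqrt a}^{\sqrt b}\frac{|d_c'|}{|d_c|^2}\bigr)$, which is exactly the estimate you derive by hand via integration by parts. The only cosmetic difference is that the paper first absorbs the numerator $c(\mu)$ (a finite sum of exponentials) into the phase and keeps only $1/d_c$ as amplitude, whereas you retain the full quotient $c/d_c$; both yield the same $a,b$-dependent constant.
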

\begin{proof}
The proof is the same as before except for terms like
\eqref{eqsecondterm}. Indeed in that case, we simply notice that 
$d_c(\mu)$ is non zero on $[0,b]$ and is smooth (in the variable $\mu$), consequently by a variant of 
van der Corput Lemma
\cite[Corollary, p. 334]{Zygmung:99}, we have
\[
\left|\int_{\sqrt{a}}^{\sqrt{b}} e^{-it\mu^2}
 \frac{1}{d_c(\mu)} \, e^{i\mu (x+y+\ell)} d\mu\right|
 \leq  \frac{4 \sqrt{2}}{\sqrt{t}} \left(\frac{1}{|d_c(\sqrt{b})|}+
 \int_{\sqrt{a}}^{\sqrt{b}}  
 \frac{|d_c'(\mu)|}{|d_c(\mu)|^2}  d\mu\right).
\]
This directly implies that
\[
\left|\int_{\sqrt{a}}^{\sqrt{b}} e^{-it\mu^2}
 \frac{1}{d_c(\mu)} \, e^{i\mu (x+y+\ell)} d\mu\right|
 \leq  \frac{C}{\sqrt{t}},
\]
with a positive constant $C$ that may depend on $b$.
\end{proof}

\sn{The previous arguments apply \ali{to the subspace} $Y_B$, namely we have
\begin{theorem} \label{mainresultYB}
In addition to  the \sn{assumptions \ref{lassumptionserge}
and \ref{lassumptionsergeYB}}, suppose that \eqref{lowerbddc} holds.
Then  there is a positive constant  $C$ such that
for all  $t
\neq 0$, \be \label{dispestYB} \sn{\left\|e^{-itH} \sn{f}
\right\|_{L^\infty({\mathcal R})} \leq C \,
\left|t\right|^{- 1/2}
\|f\|_{
L^1({\mathcal R_B})}\leq C \, |\mathcal R_B|^{\frac{1}{2}}
\left|t\right|^{- 1/2}
\|f\|_{
L^2({\mathcal R_B})},  
\forall f\in Y_B},\ee 
where $|\mathcal R_B|$ is the sum of the lengths of the edges of $\mathcal R_B$.
\end{theorem}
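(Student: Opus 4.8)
The plan is to proceed exactly as in the proof of Theorem \ref{mainresult}, but now reading off the kernel from Theorem \ref{res.idYB} rather than Theorem \ref{res.idYj}. First I would fix $0<a<b<\infty$ and use \eqref{resolidYB} to obtain the kernel of $e^{-itH}\mathbb{I}_{(a,b)}(H)$ restricted to $Y_B$, namely
\[
\Big[\int_0^{+\infty} e^{-it\lambda}\mathbb{I}_{(a,b)}(\lambda) E_{ac}(d\lambda)\Pi_B\Big](x,y)
=-\frac{1}{\pi}\int_a^b e^{-it\lambda}\mathbb{I}_{(a,b)}(\lambda)\,\Im K_B(x,y,\lambda)\,d\lambda,
\]
and then substitute $\lambda=\mu^2$ to bring in the factor $2\mu\,d\mu$ that cancels the $1/z$ prefactor in $K_B$. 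The structure of $K_B$ from \eqref{kernel11B}, \eqref{kernel11bb} and \eqref{kernel21B} is entirely analogous to that of $K_k$: each piece is a sum of terms of the form $\frac{c(\mu)}{d_c(\mu)}e^{i\mu(\pm x\pm y)}$, where $c(\mu)$ is a polynomial in $e^{\pm i\mu r_j^\pm\ell_j}$, together with a possible free term $e^{i\mu|x-y|}$ on the diagonal edges of $\mathcal R_B$.

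Next, for each such term I would invoke the assumption \eqref{lowerbddc}, which guarantees $|d_c(\mu)|\geq\alpha$ on all of $\mathbb R$. Since $d_c$ is a quasi-periodic function bounded away from zero, Corollary 1 of \cite[p. 164]{gelfand} gives the absolutely convergent expansion $\frac{1}{d_c(\mu)}=\sum_{\lambda\in D}d_\lambda e^{i\mu\lambda}$ with $\sum_{\lambda\in D}|d_\lambda|<\infty$ and $D$ discrete. Expanding the polynomial $c(\mu)$ into its finitely many exponential monomials and multiplying by this series, every term in $\Im K_B(x,y,\mu^2)$ reduces, after the $\mu$-integration, to an integral of the form $\int_{\sqrt a}^{\sqrt b} e^{-it\mu^2} e^{i\mu\beta}\,d\mu$ for a real phase shift $\beta$. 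Each of these is controlled by van der Corput's Lemma \cite[Proposition 2, p. 332]{Zygmung:99} by $\frac{4\sqrt2}{\sqrt t}$ uniformly in $\beta$, and summing against $\sum_{\lambda\in D}|d_\lambda|<\infty$ produces a bound $\frac{C}{\sqrt t}$ for the full kernel, uniform in $x,y,t,a,b$. The free term $e^{i\mu|x-y|}$ is handled by the same van der Corput estimate directly.

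Having bounded the kernel uniformly by $C|t|^{-1/2}$, the first inequality in \eqref{dispestYB} follows by integrating against $f\in Y_B$ and taking $a\to 0$, $b\to\infty$ in the usual way, exactly as the estimates \eqref{term11}--\eqref{term12} imply \eqref{dispest}. For the second inequality I would only need to observe that functions in $Y_B$ are supported on $\mathcal R_B$, whose total length is $|\mathcal R_B|<\infty$, so Cauchy--Schwarz gives $\|f\|_{L^1(\mathcal R_B)}\leq |\mathcal R_B|^{1/2}\|f\|_{L^2(\mathcal R_B)}$. The main point to be careful about — the only genuine difference from Theorem \ref{mainresult} — is the verification that the $Y_B$-kernel $K_B$ really has the same exponential-polynomial-over-$d_c$ structure as $K_k$; this is exactly what Assumption \ref{lassumptionsergeYB} together with \eqref{hypoYBcons} and the forms \eqref{kernel11B}--\eqref{kernel21B} supplies, so no new analytic obstacle arises and the van der Corput machinery transfers verbatim.
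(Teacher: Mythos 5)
Your proposal is correct and follows exactly the route the paper intends: the paper gives no separate proof of this theorem, stating only that "the previous arguments apply to the subspace $Y_B$", and your write-up is precisely that adaptation — reading the kernel from Theorem \ref{res.idYB}, expanding $1/d_c$ via the quasi-periodicity argument under \eqref{lowerbddc}, applying van der Corput term by term, and finishing the second inequality by Cauchy--Schwarz using that elements of $Y_B$ are supported on the finite-length subgraph $\mathcal R_B$.
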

\begin{theorem} \label{mainresultfrequency bandYB}
Let the  assumptions \ref{lassumptionserge}
and \ref{lassumptionsergeYB}
be satisfied.
Then  for all $0\leq a<b<\infty$, there a positive constant  $C$ (that may depend on $b$) such that
for all  $t
\neq 0$, \be \label{dispestFBYB}  \left\|e^{-itH}\mathbb{I}_{(a,b)}(H) \sn{f}
\right\|_{L^\infty({\mathcal R})} \leq C \,
\left|t\right|^{- 1/2}  \| f\|_{L^2({\mathcal R}_B)}, \forall f\in Y_B. \ee
\end{theorem}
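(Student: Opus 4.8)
The plan is to follow the proof of Theorem \ref{mainresultfrequency band} almost verbatim, with the kernel $K_k$ on $Y_k$ replaced by the kernel $K_B$ on $Y_B$ furnished by Theorem \ref{res.idYB}, and to close exactly as in Theorem \ref{mainresultYB}. First I would record that every $f\in Y_B$ vanishes on all the infinite edges (orthogonality to each $Y_j$, $j\in I_U$, forces $f|_{e_j}=0$) and satisfies $\Pi_D f=0$; hence $f$ is supported on $\mathcal{R}_B$ and the pure point contribution is absent. Taking $h(\lambda)=e^{-it\lambda}$ in Theorem \ref{res.idYB} then shows, as in Theorem \ref{mainresultfrequency band}, that the Schwartz kernel $\mathcal{K}_t$ of $e^{-itH}\mathbb{I}_{(a,b)}(H)$ acting on $Y_B$ is
\[
\mathcal{K}_t(x,y)=-\frac{1}{\pi}\int_a^b e^{-it\lambda}\,\Im K_B(x,y,\lambda)\,d\lambda,\qquad x\in\mathcal{R},\ y\in\mathcal{R}_B,
\]
and, after the substitution $\lambda=\mu^2$,
\[
\mathcal{K}_t(x,y)=-\frac{2}{\pi}\int_{\sqrt a}^{\sqrt b} e^{-it\mu^2}\,\Im K_B(x,y,\mu^2)\,\mu\,d\mu.
\]

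Next I would run through the three forms \eqref{kernel11B}, \eqref{kernel11bb}, \eqref{kernel21B} of $K_B$ according to the position of $x$ (the variable $y$ always lies on a bounded edge). Multiplying by $\mu$ cancels the prefactor $1/(2i\mu)$; writing out $2i\mu K_B$ and using $|\Im w|\le|w|$, each contribution is controlled by a finite sum of model integrals: the diagonal integral $\int_{\sqrt a}^{\sqrt b}e^{-it\mu^2}e^{i\mu|x-y|}\,d\mu$ (present only when $x,y$ lie on the same bounded edge), and integrals
\[
\int_{\sqrt a}^{\sqrt b} e^{-it\mu^2}\,\frac{1}{d_c(\mu)}\,e^{i\mu(x+\tau y+\ell)}\,d\mu,
\]
where $\ell$ runs over the finitely many rational combinations of the $\ell_j$ obtained by expanding the polynomial amplitudes $c^c_{B,\dots}(\mu)$ into exponentials $e^{\pm i\mu r_j^\pm\ell_j}$, and where the only surviving denominator, after the cancellation of the poles at $\pm\lambda_m$ guaranteed by Assumption \ref{lassumptionsergeYB}, is the factor $d_c$. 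The diagonal integral is bounded by $4\sqrt2\,|t|^{-1/2}$ through the plain van der Corput lemma, since the phase $-t\mu^2+\mu|x-y|$ has second derivative $-2t$.

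The one point that differs from Theorem \ref{mainresultYB}, and the reason \eqref{lowerbddc} is no longer needed, is the treatment of the amplitude integrals. By Assumption \ref{lassumptionserge} the function $d_c$ is holomorphic with $d_c(\mu)\neq0$ for all $\mu\in\RR$, so on the \emph{compact} window $[\sqrt a,\sqrt b]$ it is smooth and bounded away from zero, whence $1/d_c$ is a $C^1$ amplitude of finite total variation there. The amplitude variant of the van der Corput lemma \cite[Corollary, p.~334]{Zygmung:99} then yields
\[
\left|\int_{\sqrt a}^{\sqrt b} e^{-it\mu^2}\,\frac{1}{d_c(\mu)}\,e^{i\mu(x+\tau y+\ell)}\,d\mu\right|\le \frac{4\sqrt2}{\sqrt{|t|}}\left(\frac{1}{|d_c(\sqrt b)|}+\int_{\sqrt a}^{\sqrt b}\frac{|d_c'(\mu)|}{|d_c(\mu)|^2}\,d\mu\right),
\]
a quantity finite and bounded by a constant depending only on $b$ (and on the finite list of phases $\ell$), precisely because compactness of $[\sqrt a,\sqrt b]$ takes over the role of the missing global lower bound on $|d_c|$.

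Collecting the finitely many terms produces a kernel bound $|\mathcal{K}_t(x,y)|\le C\,|t|^{-1/2}$, uniform in $x\in\mathcal{R}$ and $y\in\mathcal{R}_B$, with $C=C(b)$. Integrating against $f$ gives the $L^1$--$L^\infty$ estimate $\|e^{-itH}\mathbb{I}_{(a,b)}(H)f\|_{L^\infty(\mathcal{R})}\le C\,|t|^{-1/2}\,\|f\|_{L^1(\mathcal{R}_B)}$, and since $f$ is supported on the core $\mathcal{R}_B$ of finite total length, Cauchy--Schwarz gives $\|f\|_{L^1(\mathcal{R}_B)}\le|\mathcal{R}_B|^{1/2}\|f\|_{L^2(\mathcal{R}_B)}$, yielding \eqref{dispestFBYB}. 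I expect the amplitude integral to be the only genuine obstacle: one must verify that discarding \eqref{lowerbddc} costs merely a $b$-dependent constant, which is exactly what compactness of the frequency band together with the non-vanishing of $d_c$ on $\RR$ (Assumption \ref{lassumptionserge}) provide.
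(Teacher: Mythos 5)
Your proposal is correct and follows essentially the same route the paper intends: the kernel representation of Theorem \ref{res.idYB}, the substitution $\lambda=\mu^2$, plain van der Corput for the diagonal term, the amplitude variant of van der Corput on the compact window $[\sqrt a,\sqrt b]$ where $d_c$ is smooth and non-vanishing (replacing \eqref{lowerbddc}), and finally Cauchy--Schwarz on the finite-length core to pass from $\|f\|_{L^1(\mathcal R_B)}$ to $\|f\|_{L^2(\mathcal R_B)}$. You also correctly read the denominators in \eqref{kernel11bb} and \eqref{kernel21B} as $d_c$ rather than $d$, which is what Assumption \ref{lassumptionsergeYB} is designed to ensure.
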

%ici
Let us finish this section with a $(L^1\cap L^2)
-L^\infty$ decay of $e^{itH} P_{ac}$ in $t^{-\frac{1}{2}}$ that directly follows from Theorems \ref{mainresult} and \ref{mainresultYB}.
\begin{corollary}\label{coromainresult}
In addition to  the assumptions \ref{lassumptionserge}, \ref{lassumptionsergeYj}
and \ref{lassumptionsergeYB}, suppose that \eqref{lowerbddc} holds.
Then  there is a positive constant  $C_1$ such that
for all  $t
\neq 0$, 
\beq
\||e^{-itH} P_{ac} f\|_{L^\infty(\mathcal{R})}\leq C_1 |t|^{-1/2} |||f|||,
\forall f\in L^1(\mathcal{R})\cap L^2(\mathcal{R}),
\eeq
where $|||f|||=\|f\|_{L^1(\mathcal{R})}+\|f\|_{L^2(\mathcal{R}_B)}$.
\end{corollary}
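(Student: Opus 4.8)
The plan is to deduce Corollary \ref{coromainresult} directly from Theorems \ref{mainresult} and \ref{mainresultYB} by exploiting the orthogonal decomposition of $L^2(\mathcal{R})$ introduced before Assumption \ref{lassumptionsergeYj} and the fact that $e^{-itH}P_{ac}$ annihilates the discrete part. First I would recall from Corollary \ref{absence_sing_spec} that, under the three assumptions, $P_{ac}f=f-\Pi_D f$, so that for $f\in L^1(\mathcal{R})\cap L^2(\mathcal{R})$ we may write
\[
P_{ac}f=\sum_{k\in I_U}\Pi_k f+\Pi_B f,
\]
since $f=\sum_{k\in I_U}\Pi_k f+\Pi_D f+\Pi_B f$ by the orthogonal splitting $L^2(\mathcal{R})=(\oplus_{i\in I_U}Y_i)\oplus_\perp Y_D\oplus_\perp Y_B$. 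The strategy is then to apply the linearity of $e^{-itH}$ and the triangle inequality in $L^\infty(\mathcal{R})$ to estimate each summand separately.

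For the ends contributions, each $\Pi_k f$ belongs to $Y_k$, and I would check that $\Pi_k f\in Y_k\cap L^1(\mathcal{R})$: indeed $\Pi_k f$ is the restriction-extension of $f$ to the single outer edge $e_k$, so $\|\Pi_k f\|_{L^1(\mathcal{R})}=\|f_k\|_{L^1(e_k)}\leq\|f\|_{L^1(\mathcal{R})}$. Theorem \ref{mainresult} (whose hypotheses \ref{lassumptionserge}, \ref{lassumptionsergeYj} and \eqref{lowerbddc} are all in force) then gives
\[
\|e^{-itH}\Pi_k f\|_{L^\infty(\mathcal{R})}\leq C|t|^{-1/2}\|\Pi_k f\|_{L^1(\mathcal{R})}\leq C|t|^{-1/2}\|f\|_{L^1(\mathcal{R})},
\]
and summing over the finite set $I_U$ costs only the factor $\#I_U$. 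For the bounded-core contribution, $\Pi_B f\in Y_B$, so Theorem \ref{mainresultYB} applies and yields
\[
\|e^{-itH}\Pi_B f\|_{L^\infty(\mathcal{R})}\leq C|t|^{-1/2}\|\Pi_B f\|_{L^2(\mathcal{R}_B)}\leq C|t|^{-1/2}\|f\|_{L^2(\mathcal{R}_B)},
\]
the last step using that orthogonal projection does not increase the $L^2$ norm and that elements of $Y_B$ are supported in $\mathcal{R}_B$ (so only the $\mathcal{R}_B$-part of the $L^2$ norm of $f$ enters). Combining the two bounds via the triangle inequality produces the mixed norm $|||f|||=\|f\|_{L^1(\mathcal{R})}+\|f\|_{L^2(\mathcal{R}_B)}$ with a new constant $C_1$ depending on $\#I_U$ and the constants from the two theorems.

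The only points needing a little care, rather than a genuine obstacle, are bookkeeping ones: verifying that $\Pi_k f$ indeed lies in $L^1(\mathcal{R})$ (so that Theorem \ref{mainresult} is applicable and not merely the frequency-band version) and that $\Pi_B f\in L^2(\mathcal{R}_B)$ with $\|\Pi_B f\|_{L^2}\leq\|f\|_{L^2}$, both of which are immediate from the support properties of the $Y_k$ and $Y_B$ and the contractivity of orthogonal projections. There is no delicate analysis here because all the stationary-phase and van der Corput work has already been carried out in the proofs of Theorems \ref{mainresult} and \ref{mainresultYB}; the corollary is a clean superposition argument. I would therefore expect the proof to be only a few lines, the main subtlety being simply to state the decomposition of $P_{ac}f$ correctly and to track which norm controls which piece.
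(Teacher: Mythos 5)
Your argument is correct and is exactly the route the paper intends: the paper gives no explicit proof, stating only that the corollary ``directly follows from Theorems \ref{mainresult} and \ref{mainresultYB}'', and your decomposition $P_{ac}f=\sum_{k\in I_U}\Pi_k f+\Pi_B f$ together with the norm bookkeeping $\|\Pi_k f\|_{L^1(\mathcal{R})}\le\|f\|_{L^1(\mathcal{R})}$ and $\|\Pi_B f\|_{L^2(\mathcal{R}_B)}\le\|f\|_{L^2(\mathcal{R}_B)}$ is precisely the missing superposition argument.
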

}

%\begin{corollary} \label{mainresultlocal}
%In addition to  the assumption \ref{lassumptionserge}, suppose that
%
%\begin{equation}\label{lowerbddcnonuniforme}
%d_c(\mu)\ne 0, \forall \mu\in \mathbb{R}.
%\end{equation}
%Then for For any $0\leq a<b<\infty$,
% there a positive constant  $C(b)$ depending on   $b$ such that
%for all $t
%\neq 0$, \be \label{dispest} \left\|e^{itH} \mathbb{I}_{(a,b)} P_{ac}
%\right\|_{L^1({\mathcal R}) \rightarrow L^\infty({\mathcal R})} \leq C(b) \,
%\left|t\right|^{- 1/2}. \ee
%\end{corollary}
%\begin{proof}
%Same proof as before using the variant of van der Corput Lemma
%\cite[Corollary, p. 334]{Zygmung:99}
%as $d_c(\mu)^{-1}$ is integrable in any intervals $(a,b)$.
%\end{proof}

\section{Probability densities and probability flows} \label{probas}

Let us start with the following definition.

\begin{definition}
If $u(t)=e^{-itH}u_0, t\in \mathbb{R}$, with $u_0\in D(H)$, 
then for all $k\in I$,
we define its probability density
$\rho_k$ and probability flow $j_k$ by
\[
\rho_k=|u_k|^2, j_k=\overline{u_k}  u_{k,x} -u_k \overline{u_{k,x}}.
\]   
\end{definition}

We first recall the continuity equations on each edge of the graph.

\begin{lemma}
Let $u_0\in D(H)$ and $u(t)=e^{-itH}u_0, t\in \mathbb{R}$.
Then we have
\begin{eqnarray}\label{conteq:b}
i\frac{d}{dt} \int_{e_k} \rho_k=i\frac{d}{dt} \|u_k\|_{L^2(e_k)}^2
=j_k(0)-j(l_k), \forall k\in I_B,
\\
i\frac{d}{dt} \int_{e_k} \rho_k=i\frac{d}{dt} \|u_k\|_{L^2(e_k)}^2
=j_k(0), \forall k\in I_U.
\label{contequb}
\end{eqnarray}
\end{lemma}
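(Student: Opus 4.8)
The plan is to prove the two continuity equations by a direct computation starting from the Schr\"odinger equation $i u_t = H u$, restricted to each edge, where $H$ acts as $-\partial_x^2$. On any edge $e_k$ the component $u_k(t)$ satisfies $i\partial_t u_k = -\partial_x^2 u_k$, i.e. $\partial_t u_k = i\, u_{k,xx}$, and its complex conjugate $\partial_t \overline{u_k} = -i\, \overline{u_{k,xx}}$. First I would differentiate $\int_{e_k}\rho_k = \int_{e_k} |u_k|^2\,dx = \int_{e_k} u_k\overline{u_k}\,dx$ under the integral sign, which is justified because $u_0\in D(H)$ guarantees enough smoothness and integrability of $u(t)$ (the group preserves $D(H)$, so $u_k, u_{k,xx}\in L^2(e_k)$).

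The core computation is the identity relating the time derivative of the density to the spatial derivative of the flow. I would write
\[
\frac{d}{dt}\int_{e_k}|u_k|^2\,dx
=\int_{e_k}\big(\partial_t u_k\,\overline{u_k}+u_k\,\partial_t\overline{u_k}\big)\,dx
=\int_{e_k}\big(i\,u_{k,xx}\,\overline{u_k}-i\,u_k\,\overline{u_{k,xx}}\big)\,dx,
\]
and then recognize the integrand as a perfect $x$-derivative: since $\partial_x\big(u_{k,x}\overline{u_k}-u_k\overline{u_{k,x}}\big)=u_{k,xx}\overline{u_k}-u_k\overline{u_{k,xx}}$, the integral equals $i\big[u_{k,x}\overline{u_k}-u_k\overline{u_{k,x}}\big]$ evaluated at the endpoints of $e_k$. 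Comparing with the definition $j_k=\overline{u_k}u_{k,x}-u_k\overline{u_{k,x}}$, I see that the bracketed boundary term is exactly $j_k$, so that $i\frac{d}{dt}\int_{e_k}\rho_k = -\big(j_k(\ell_k)-j_k(0)\big)=j_k(0)-j_k(\ell_k)$ for $k\in I_B$. This is precisely \eqref{conteq:b}; one must only be careful with the sign convention and the factor of $i$, matching the statement's placement of $i$ on the left-hand side.

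For the unbounded edges $k\in I_U$, the same computation produces the boundary term $i\big[u_{k,x}\overline{u_k}-u_k\overline{u_{k,x}}\big]$ between $0$ and $+\infty$; the plan is to show the contribution at $+\infty$ vanishes, giving $i\frac{d}{dt}\int_{e_k}\rho_k=j_k(0)$ as in \eqref{contequb}. The main obstacle, and the only point requiring genuine care, is justifying this decay at infinity: I would argue that because $u_k\in H^2(0,\infty)$ (as $u(t)\in D(H)$), the Sobolev embedding $H^2(0,\infty)\hookrightarrow C^1([0,\infty))$ together with the fact that $u_k$ and $u_{k,x}$ are in $L^2$ forces $u_k(x)\to 0$ and $u_{k,x}(x)\to 0$ as $x\to+\infty$, so the product $u_{k,x}\overline{u_k}-u_k\overline{u_{k,x}}$ tends to $0$. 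Equivalently, one may first integrate over the truncated interval $(0,R)$, as in the proof of Lemma \ref{lspectrediscret}, and then let $R\to\infty$, using the integrability of the density to discard the boundary term at $R$ along a suitable sequence. This mirrors exactly the truncation device already employed earlier in the paper, so no new machinery is needed.
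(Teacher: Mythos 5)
Your proof is correct and follows essentially the same route as the paper: differentiate $\|u_k\|_{L^2(e_k)}^2$ in time, substitute $iu_{k,t}=-u_{k,xx}$, and integrate by parts to recognize the boundary term as the flow $j_k$. The only difference is that you spell out the vanishing of the boundary contribution at $+\infty$ for $k\in I_U$ (via $u_k\in H^2(0,\infty)$, hence $u_k,u_{k,x}\to 0$), a point the paper leaves implicit; your justification is sound.
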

\begin{proof}
These identities as based on simple integration by parts.
Namely as
$u_k$ satisfies
\[
i u_{k,t}=-u_{k,xx} \hbox{ on } e_k,
\]
we have
\begin{eqnarray*}
i\frac{d}{dt} \|u_k\|_{L^2(e_k)}^2&=&
\int_{e_k} (i u_{k,t} \overline{u_k}
-u_k \overline{i u_{k,t}})
\\
&=&
-\int_{e_k} (u_{k,xx} \overline{u_k}
-u_k \overline{u_{k,xx}}).
\end{eqnarray*}
By an integration by parts, we directly find \eqref{conteq:b} and \eqref{contequb}.
\end{proof}

\begin{theorem}
Let $u_0\in D(H)$ and $u(t)=e^{-itH}u_0, t\in \mathbb{R}$.
Then we have
\begin{eqnarray}\label{eq:flowsortant}
i\frac{d}{dt} \sum_{k\in I_B}\int_{e_k} \rho_k=i\frac{d}{dt} \sum_{k\in I_B}\|u_k\|_{L^2(e_k)}^2
=-\sum_{k\in I_U}j_k(0).
\end{eqnarray}
\end{theorem}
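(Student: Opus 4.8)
The plan is to sum the edge-wise continuity equations from the previous Lemma over all bounded edges and exploit the Kirchhoff transmission conditions together with the boundary conditions at exterior vertices to collapse the boundary terms. Summing \eqref{conteq:b} over $k\in I_B$ yields
\[
i\frac{d}{dt} \sum_{k\in I_B}\int_{e_k} \rho_k = \sum_{k\in I_B}\big(j_k(0)-j_k(\ell_k)\big).
\]
The right-hand side is a sum of boundary values of the flows taken at all endpoints of the bounded edges. The key observation is that each such endpoint corresponds to a vertex $v\in V$, and that the flow values $j_k$ evaluated at $v$, when summed with the correct orientation (outward versus inward), regroup naturally into vertex contributions indexed by $v\in V$. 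Concretely, I would rewrite $\sum_{k\in I_B}(j_k(0)-j_k(\ell_k))$ as $-\sum_{v\in V}\sum_{k\in I_v\cap I_B} j_k(v)$, where $j_k(v)$ denotes the flow of the component $u_k$ evaluated at the endpoint of $e_k$ identified with $v$, oriented outward from $v$ (so that the sign matches the outward normal derivative convention used in the definition of $\mathcal{D}(H)$).

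The main work is then to show that each vertex contribution vanishes or telescopes appropriately. At an exterior vertex $v\in V_{\rm ext}$, the Dirichlet condition $u(v)=0$ forces $\rho$ and hence the flow $j_k(v)=\overline{u_k}u_{k,x}-u_k\overline{u_{k,x}}$ to vanish at $v$, since it contains the factor $\overline{u_k}(v)=u_k(v)=0$. At an interior vertex $v\in V_{\rm int}$, I would use the continuity of $u$ across $v$ (so all $u_k(v)$ share a common value $u(v)$) together with the Kirchhoff law $\sum_{k\in I_v}\frac{du_k}{dn_k}(v)=0$. Writing $j_k(v)=\overline{u(v)}\,\frac{du_k}{dn_k}(v)-u(v)\,\overline{\frac{du_k}{dn_k}(v)}$ for the outward-oriented flow at an interior vertex, the sum over $k\in I_v$ factors as $\overline{u(v)}\sum_{k\in I_v}\frac{du_k}{dn_k}(v)-u(v)\overline{\sum_{k\in I_v}\frac{du_k}{dn_k}(v)}=0$ by Kirchhoff.

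The one subtlety — and what I expect to be the main obstacle — is the bookkeeping at the connecting vertices $v\in V_{\rm connecting}$, where $I_v$ contains both bounded and unbounded edges. There the full Kirchhoff sum over all of $I_v$ vanishes, but summing $j_k(0)-j_k(\ell_k)$ only over $k\in I_B$ captures merely the bounded-edge part $\sum_{k\in I_v\cap I_B}\frac{du_k}{dn_k}(v)$, which is \emph{not} zero in isolation. Instead, by Kirchhoff it equals $-\sum_{k\in I_v\cap I_U}\frac{du_k}{dn_k}(v)$, so the corresponding vertex contribution becomes precisely the inward flow from the unbounded edges at $v$. Tracking this leftover term across all connecting vertices is exactly what produces the right-hand side: it assembles into $-\sum_{k\in I_U}j_k(0)$, where $j_k(0)$ is the flow at the (finite) endpoint of the outer edge $e_k$. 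Once all the $V_{\rm int}$ and $V_{\rm ext}$ terms have been shown to cancel and the connecting-vertex remainders identified with the outer-edge flows, the identity \eqref{eq:flowsortant} follows. Finally, the smoothness needed to justify the pointwise evaluations and the time differentiation is guaranteed by $u_0\in D(H)$, which keeps $u(t)\in D(H)\subset \prod_k H^2(e_k)$ via the Sobolev embedding $H^2(e_k)\subset C^1(\overline{e_k})$.
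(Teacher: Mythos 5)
Your proof is correct, but it takes a genuinely different route from the paper. The paper's argument is global and very short: since $H$ is self-adjoint, $e^{-itH}$ is an isometry, so $\sum_{k\in I}\|u_k(t)\|_{L^2(e_k)}^2$ is conserved; hence $i\frac{d}{dt}\sum_{k\in I_B}\|u_k\|^2=-i\frac{d}{dt}\sum_{k\in I_U}\|u_k\|^2$, and the right-hand side equals $-\sum_{k\in I_U}j_k(0)$ by the unbounded-edge continuity equation \eqref{contequb}. You instead sum the bounded-edge identities \eqref{conteq:b}, regroup the boundary terms vertex by vertex with the outward-normal orientation, kill the exterior-vertex terms by the Dirichlet condition, kill the purely interior ones by continuity plus Kirchhoff, and correctly identify the leftover at the connecting vertices with $-\sum_{k\in I_U}j_k(0)$ via the partial Kirchhoff balance $\sum_{k\in I_v\cap I_B}\frac{du_k}{dn_k}(v)=-\sum_{k\in I_v\cap I_U}\frac{du_k}{dn_k}(v)$; the sign bookkeeping you describe ($j_k(0)$ versus $j_k(\ell_k)$ versus the outward-oriented vertex flow) is consistent and the remainder does assemble into the stated right-hand side. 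What each approach buys: the paper's proof is two lines and hides the vertex combinatorics inside unitarity, while yours is entirely local, never invokes conservation of the total $L^2$ norm (in fact it re-derives it as a by-product), and makes the physical reading of \eqref{eq:flowsortant} as a flux balance across $V_{\rm connecting}$ completely explicit.
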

\begin{proof}
As $H$ is a selfajoint operator, $\|e^{-itH}\|_{\mathcal{L}(H)}=1$,
consequently
\[
\sum_{k\in I}\|u_k(t)\|_{L^2(e_k)}^2=\sum_{k\in I}\|u_{0,k}\|_{L^2(e_k)}^2, \forall  t\in \mathbb{R}.
\]
This implies that
\begin{eqnarray}\label{eq:somme nulle}
i\frac{d}{dt} \sum_{k\in I_B}\|u_k\|_{L^2(e_k)}^2
=-i\frac{d}{dt} \sum_{k\in I_U}\|u_k\|_{L^2(e_k)}^2,
\end{eqnarray}
and we conclude using \eqref{contequb}.
\end{proof}

The previous theorem means that the time derivative of the square of the $L^2$-norm (of the solution of the Schr\"odinger equation) on the bounded edges of the graph  equals $i \ds \sum_{k\in I_U}j_k(0).$
In other words, the rate of change of the total probability on the bounded part of the graph is equal to $i$ times the probability flow at the connecting vertices.

Let us now show that the hypothesis on $u_0$ implies that
the probability flow remains bounded uniformly in time.

\begin{corollary}
Let $u_0\in D(H)$ and $u(t)=e^{-itH}u_0, t\in \mathbb{R}$.
Then there exists a positive constant $C$ such that  
\begin{eqnarray}\label{eq:borneflowsortant}
\left|\frac{d}{dt} \sum_{k\in I_B}\int_{e_k} \rho_k(t)\right|=\left|\frac{d}{dt} \sum_{k\in I_B}\|u_k(t)\|_{L^2(e_k)}^2\right|
\leq C \|u_0\|_{D(H)}^2, \forall  t\in \mathbb{R}.
\end{eqnarray}
\end{corollary}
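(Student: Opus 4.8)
The plan is to prove the bound \eqref{eq:borneflowsortant} by combining the flow identity \eqref{eq:flowsortant} from the previous theorem with a trace/Sobolev control of the flow values at the connecting vertices. Starting from \eqref{eq:flowsortant}, I have
\[
\left|\frac{d}{dt} \sum_{k\in I_B}\|u_k(t)\|_{L^2(e_k)}^2\right|
=\left|\sum_{k\in I_U} j_k(0)\right|,
\]
so everything reduces to estimating $\sum_{k\in I_U}|j_k(0)|$ uniformly in $t$. Recalling the definition $j_k=\overline{u_k}\,u_{k,x}-u_k\,\overline{u_{k,x}}$, evaluated at the connecting vertex $x=0$ of the outer edge $e_k$, the triangle inequality and the elementary bound $|j_k(0)|\le 2\,|u_k(0)|\,|u_{k,x}(0)|$ show it suffices to control the pointwise values $|u_k(0)|$ and $|u_{k,x}(0)|$ for each $k\in I_U$.

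The key step is to bound these boundary values by the $H^2$-norm of $u$ on the outer edges. As announced in the introduction, I would use the Sobolev embedding $H^2(e_k)\subset C^1(\overline{e_k})$: for $k\in I_U$, since $u_k(t)\in H^2(e_k)$, there is a constant $C_k>0$ with
\[
|u_k(0)|+|u_{k,x}(0)|\le C_k\,\|u_k(t)\|_{H^2(e_k)}.
\]
Hence each term $|j_k(0)|$ is controlled by $\|u_k(t)\|_{H^2(e_k)}^2$, and summing over the finite set $I_U$ gives
\[
\left|\frac{d}{dt} \sum_{k\in I_B}\|u_k(t)\|_{L^2(e_k)}^2\right|
\le C\,\sum_{k\in I_U}\|u_k(t)\|_{H^2(e_k)}^2
\le C\,\|u(t)\|_{\prod_{k\in I}H^2(e_k)}^2.
\]

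The remaining point is to replace the time-dependent right-hand side by $\|u_0\|_{D(H)}^2$, and this is where the self-adjointness of $H$ does the work. Since $u(t)=e^{-itH}u_0$ and $e^{-itH}$ commutes with $H$ and is unitary, I have $\|Hu(t)\|_{\mathcal H}=\|Hu_0\|_{\mathcal H}$ and $\|u(t)\|_{\mathcal H}=\|u_0\|_{\mathcal H}$, so the graph norm is conserved: $\|u(t)\|_{D(H)}=\|u_0\|_{D(H)}$ for all $t$. The one genuine obstacle is to pass from the graph norm $\|u(t)\|_{D(H)}^2=\|u(t)\|_{\mathcal H}^2+\|Hu(t)\|_{\mathcal H}^2$ to the full $\prod_{k\in I}H^2(e_k)$-norm appearing above; this requires an elliptic-regularity (a priori) estimate for $H$ on the metric graph, controlling $\sum_k\|u_k''\|_{L^2(e_k)}^2$ and the intermediate derivatives by $\|Hu\|_{\mathcal H}^2+\|u\|_{\mathcal H}^2$. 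On each edge this is the standard interpolation inequality for $H^2$, and the transmission conditions encoded in $D(H)$ make the boundary terms from integration by parts vanish, so the estimate closes globally. Combining the three ingredients — the flow identity, the Sobolev trace bound, and the conserved graph norm with elliptic regularity — yields \eqref{eq:borneflowsortant} with a constant $C$ depending only on the geometry of $\mathcal R_U$.
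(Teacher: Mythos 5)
Your proposal is correct and follows essentially the same route as the paper: the flow identity \eqref{eq:flowsortant}, the Sobolev embedding $H^2(e_k)\subset C^1(\overline{e_k})$ to control $j_k(0)$, and conservation of $\|u(t)\|_{\mathcal H}$ and $\|Hu(t)\|_{\mathcal H}$ under the unitary group to bound $\|u_k(t)\|_{H^2(e_k)}$ by $\|u_0\|_{D(H)}$. The only cosmetic difference is that the elliptic-regularity step you flag as the "one genuine obstacle" is immediate here, since $(Hu)_k=-u_k''$ gives $\|u_k''\|_{L^2(e_k)}\leq\|Hu\|_{\mathcal H}$ directly and the first derivative follows from the edgewise interpolation inequality, with no boundary terms to worry about.
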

\begin{proof}
As $u_0$ belongs to $D(H)$, we have
\[
Hu(t)=e^{-itH}(Hu_0),
\]
and therefore
\[
\|Hu(t)\|_{L^2(\mathcal{R})}=\|Hu_0\|_{L^2(\mathcal{R})}, \forall  t\in \mathbb{R}.
\]
This means that $u_k$ belongs to $H^2(e_k)$, for all $k\in I$
and that there exists a positive constant $C_1$ such that
for all $ t\in \mathbb{R}$, one has
\begin{eqnarray*}
\sum_{k\in I}\|u_k(t)\|^2_{H^2(e_k)}
&\leq &C_1 \sum_{k\in I}(\|u_k(t)\|^2_{L^2(e_k)}+\|Hu_k(t)\|^2_{L^2(e_k)})
\\
&=& C_1
(\|u_0\|^2_{L^2(\mathcal{R})}+
\|Hu_0\|^2_{L^2(\mathcal{R})})=C_1\|u_0\|^2_{D(H)}.
\end{eqnarray*}
Now using the embedding of $H^2(e_k)$ into 
$C^1(\bar e_k)$, we deduce that
there exists a positive constant $C_2$ such that
\[
|j_k(t,0)|\leq C_2 \|u_0\|_{D(H)}^2, \forall k\in I,  t\in \mathbb{R}.
\]
Using this estimate in \eqref{eq:flowsortant}, we obtain 
\eqref{eq:borneflowsortant}, reminding that the number of unbounded edges is finite.
\end{proof}

We end up this section by showing that if the $L^1-L^\infty$ decay is \ali{at least} $|t|^{-1/2}$ 
\ali{
and the assumptions \ref{lassumptionserge},
\ref{lassumptionsergeYj} and \ref{lassumptionsergeYB} are satisfied and \eqref{lowerbddc} holds,
}
then 
the sum of the probability flow at the endpoint of the infinite edges tends to zero as $|t|^{-1}$
for smoother initial data.

\begin{corollary}

\sn{Let the assumptions \ref{lassumptionserge}
and \ref{lassumptionsergeYj}, and \ref{lassumptionsergeYB} be satisfied and suppose that \eqref{lowerbddc} holds.}
Assume that  $u_0\in D(H)\cap L^1(\mathcal{R})$,  that $u_0=P_{ac} u_0$, as well as
$Hu_0\in  L^1(\mathcal{R})$ and let $u(t)=e^{-itH}u_0, t\in \mathbb{R}$. Then
there exists a positive constant $C$ such that  
\begin{eqnarray}\label{eq:flowsortant-tend vers zero}
\quad \quad \left|\frac{d}{dt} \sum_{k\in I_B}\|u_k(t,\cdot)\|_{L^2(e_k)}^2\right|
\leq C |t|^{-1}\sn{|||u_0||| (|||u_0|||+||| H u_0|||)},\forall t\in \mathbb{R}\setminus\{0\}.
\end{eqnarray}
Consequently, for all  $t_2>t_1>0$, one has
\begin{eqnarray}\label{eq:estdel'energie}
&&\left| \sum_{k\in I_B}\|u_k(t_2,\cdot)\|_{L^2(e_k)}^2
 -\sum_{k\in I_B}\|u_k(t_1,\cdot)\|_{L^2(e_k)}^2\right|
 \\
&&\hspace{3cm}\leq  
 C \, \ln \left(\frac{t_2}{t_1}\right) \sn{|||u_0||| \left(|||u_0|||+||| H u_0|||\right)}.
 \nonumber
\end{eqnarray}
\end{corollary}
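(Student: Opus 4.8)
The plan is to establish the pointwise-in-time decay estimate \eqref{eq:flowsortant-tend vers zero} and then integrate it to obtain \eqref{eq:estdel'energie}. Starting from \eqref{eq:flowsortant}, the left-hand side equals $\left|\sum_{k\in I_U} j_k(t,0)\right|$, so I must control the boundary flow at the connecting vertices. Recall that $j_k(t,0) = \overline{u_k(t,0)}\,u_{k,x}(t,0) - u_k(t,0)\,\overline{u_{k,x}(t,0)} = 2i\,\Im\big(\overline{u_k(t,0)}\,u_{k,x}(t,0)\big)$, so it suffices to bound $|u_k(t,0)|$ and $|u_{k,x}(t,0)|$ at the endpoints of the infinite edges. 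The key idea is that each of these quantities should decay like $|t|^{-1/2}$, so that their product decays like $|t|^{-1}$.

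First I would use the Sobolev embedding $H^2(e_k)\subset C^1(\overline{e_k})$, exactly as in the previous Corollary, to bound $|u_k(t,0)|$ and $|u_{k,x}(t,0)|$ by the $H^2(e_k)$-norm of $u$ on the relevant edge; but this only gives boundedness, not decay. To extract decay I would instead apply the dispersive estimate from Corollary \ref{coromainresult} to both $u(t)=e^{-itH}P_{ac}u_0$ and to $Hu(t)=e^{-itH}P_{ac}(Hu_0)$. Since $u_0=P_{ac}u_0$ and $Hu_0\in L^1(\mathcal{R})\cap L^2(\mathcal{R}_B)$ (the latter following from $Hu_0\in L^1$ together with $Hu_0\in L^2(\mathcal{R})$, as $u_0\in D(H)$), Corollary \ref{coromainresult} yields
\[
\|u(t)\|_{L^\infty(\mathcal{R})}\leq C_1|t|^{-1/2}\,|||u_0|||,\qquad
\|Hu(t)\|_{L^\infty(\mathcal{R})}\leq C_1|t|^{-1/2}\,|||Hu_0|||.
\]
On each infinite edge $e_k$ with $k\in I_U$, one has $Hu_k=-u_{k,xx}$, so the $L^\infty$-bound on $Hu(t)$ controls $\|u_{k,xx}(t)\|_{L^\infty(e_k)}$.

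The main obstacle is passing from $L^\infty$-control of $u_k$ and $u_{k,xx}$ to pointwise control of the first derivative $u_{k,x}(t,0)$ at the connecting vertex. The plan is to use an interpolation (Gagliardo--Nirenberg type) inequality on the half-line $e_k\cong(0,\infty)$, of the form $\|u_{k,x}\|_{L^\infty(e_k)}\leq C\,\|u_k\|_{L^\infty(e_k)}^{1/2}\,\|u_{k,xx}\|_{L^\infty(e_k)}^{1/2}$, valid for functions in $C^2(\overline{e_k})$ with bounded second derivative. Combining this with the two dispersive bounds gives
\[
|u_{k,x}(t,0)|\leq \|u_{k,x}(t)\|_{L^\infty(e_k)}\leq C\big(|t|^{-1/2}|||u_0|||\big)^{1/2}\big(|t|^{-1/2}|||Hu_0|||\big)^{1/2}=C|t|^{-1/2}\,|||u_0|||^{1/2}\,|||Hu_0|||^{1/2},
\]
and likewise $|u_k(t,0)|\leq C|t|^{-1/2}|||u_0|||$. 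Multiplying these two estimates and summing over the finitely many $k\in I_U$ bounds $\left|\sum_{k\in I_U} j_k(t,0)\right|$ by $C|t|^{-1}\,|||u_0|||\,(|||u_0|||+|||Hu_0|||)$, which is \eqref{eq:flowsortant-tend vers zero}.

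Finally, to obtain \eqref{eq:estdel'energie} I would integrate \eqref{eq:flowsortant-tend vers zero} in time. Writing the difference of the $L^2(\mathcal{R}_B)$-masses as the integral of its derivative and applying the triangle inequality,
\[
\left|\sum_{k\in I_B}\|u_k(t_2)\|_{L^2(e_k)}^2-\sum_{k\in I_B}\|u_k(t_1)\|_{L^2(e_k)}^2\right|
\leq\int_{t_1}^{t_2}\left|\frac{d}{dt}\sum_{k\in I_B}\|u_k(t)\|_{L^2(e_k)}^2\right|dt
\leq C\,|||u_0|||\,(|||u_0|||+|||Hu_0|||)\int_{t_1}^{t_2}\frac{dt}{t},
\]
and the last integral is exactly $\ln(t_2/t_1)$, giving the claimed logarithmic bound. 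The only points requiring care are justifying the interpolation inequality on the half-line (the one genuinely new analytic input) and checking the hypotheses of Corollary \ref{coromainresult} are met for both $u_0$ and $Hu_0$.
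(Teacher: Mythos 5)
Your proposal is correct and follows the same overall strategy as the paper: apply the dispersive estimate of Corollary \ref{coromainresult} to both $u(t)=e^{-itH}P_{ac}u_0$ and $Hu(t)=e^{-itH}P_{ac}(Hu_0)$, convert the resulting $L^\infty$ decay into pointwise decay of $u_k(t,0)$ and $u_{k,x}(t,0)$ for $k\in I_U$, feed this into the continuity equation \eqref{eq:flowsortant}, and integrate in time to get the logarithmic bound. The one place where you diverge is the control of the first derivative at the connecting vertex: the paper restricts $u_k$ to $(0,1)$, uses the elliptic a priori estimate $\|u_k\|_{H^2(0,1)}\le C(\|u_k\|_{L^2(0,1)}+\|Hu_k\|_{L^2(0,1)})$ together with $L^2(0,1)\hookleftarrow L^\infty(0,1)$ on the bounded interval, and then invokes the Sobolev embedding $H^2(0,1)\subset C^1([0,1])$; you instead invoke the Landau--Kolmogorov interpolation inequality $\|u_{k,x}\|_{L^\infty}\le C\|u_k\|_{L^\infty}^{1/2}\|u_{k,xx}\|_{L^\infty}^{1/2}$ on the half-line. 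Both work; your route even yields the slightly sharper bound $|t|^{-1}|||u_0|||^{3/2}|||Hu_0|||^{1/2}$, which dominates into the stated form by Young's inequality. Two small points to tidy up: the interpolation inequality must be applied in the class $W^{2,\infty}(e_k)$ rather than $C^2(\overline{e_k})$, since $Hu(t)$ need not lie in $D(H)$ (this is harmless, as Landau's inequality holds for distributional second derivatives in $L^\infty$), and you should note explicitly that $Hu_0=P_{ac}(Hu_0)$ (because $P_{ac}$ commutes with $H$ and $u_0=P_{ac}u_0$) so that Corollary \ref{coromainresult} indeed applies to $Hu_0$.
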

\begin{proof}
By our assumption on   $u_0$, \sn{Corollary \ref{coromainresult}
yields}
\beq\label{sn:10/10}
\|u(t,\cdot)\|_{L^\infty(\mathcal{R})}&\leq& C_1 |t|^{-1/2} \sn{|||u_0|||},
\\
 \|Hu(t,\cdot)\|_{L^\infty(\mathcal{R})}&\leq& C_1 |t|^{-1/2} 
\sn{|||Hu_0|||},
\label{sn:10/10b}
\eeq
for some positive constant $C_1$ (independent of $t$).
Now fix  $k\in I_U$, then the restriction of $u_k$ on $(0,1)$ satisfies
\[
\|u_k(t,\cdot)\|_{H^2(0,1)}\leq C_2
(\|u_k(t,\cdot)\|_{L^2(0,1)}+\|Hu_k(t,\cdot)\|_{L^2(0,1)}),
\]
for some positive constant $C_2$ (independent of $t$). Hence
\[
\|u_k(t,\cdot)\|_{H^2(0,1)}\leq C_2
\left(\|u_k(t,\cdot)\|_{L^\infty(0,1)}+\|Hu_k(t,\cdot)\|_{L^\infty(0,1)}\right)
\leq C_1^2 |t|^{-1/2}\sn{(|||u_0|||+|||Hu_0|||)}.
\]
By the   Sobolev embedding theorem
we deduce that 
\[
|u_{k,x}(t,0)|\leq C_3 \|u_k(t,\cdot)\|_{H^2(0,1)}
\leq C_3  C_1^2 |t|^{-1/2} \sn{(|||u_0|||+|||Hu_0|||)},
\]
for some positive constant $C_3$ (independent of $t$).

This estimate and \eqref{sn:10/10}-\eqref{sn:10/10b}  directly lead to
\[
|j_k(t,0)| \leq C_4 |t|^{-1} \sn{|||u_0||| (|||u_0|||+||| H u_0|||)},
\forall t>0, k\in I_U,
\]
for some positive constant $C_4$ (independent of $t$).
Using this estimate in \eqref{eq:flowsortant}, we obtain 
\eqref{eq:flowsortant-tend vers zero}.

For the second assertion, for $t_2>t_1>0$, 
as
\begin{eqnarray*}
\left|\sum_{k\in I_B}\|u_k(t_2,x)\|_{L^2(e_k)}^2-
\sum_{k\in I_B}\|u_k(t_1,x)\|_{L^2(e_k)}^2\right|&=&
\left|\int_{t_1}^{t_2}\frac{d}{dt} \sum_{k\in I_B}\|u_k(t,x)\|_{L^2(e_k)}^2\,dt\right|
\\
&\leq&
\int_{t_1}^{t_2}\left|\frac{d}{dt} \sum_{k\in I_B}\|u_k(t,x)\|_{L^2(e_k)}^2\right|\,dt,
\end{eqnarray*}
 we arrive at \eqref{eq:estdel'energie}  using  \eqref{eq:flowsortant-tend vers zero}
\end{proof}

Note that \eqref{eq:somme nulle} and 
\eqref{eq:flowsortant-tend vers zero} directly imply that
\[
\quad \quad \left|\frac{d}{dt} \sum_{k\in I_U}\|u_k(t,\cdot)\|_{L^2(e_k)}^2\right|
\leq C |t|^{-1} \sn{|||u_0||| (|||u_0|||+||| H u_0|||)}, \forall t\in \mathbb{R}\setminus\{0\}.
\]

Note also that the estimate \eqref{eq:estdel'energie} means that the energy on the bounded part of the graph can oscillate between 0 
and the total energy at time $t=0$ but the variation is  at most of logarithmic type.

\section{Some phenomena on special metric graphs} \label{examples}

According to \sn{Corollary \ref{coromainresult}, the $(L^1\cap L^2)
-L^\infty$} decay of $e^{itH} P_{ac}$ in $t^{-\frac{1}{2}}$ holds as far as 
\ali{
the assumption 
\ref{lassumptionserge}
(i.e. the equalities
\eqref{factdetD(z)}, \eqref{serge:ass0},  \eqref{serge:ass2}), 
the assumptions
\ref{lassumptionsergeYj} and \ref{lassumptionsergeYB}
and the inequality
\eqref{lowerbddc} 
}%
are satisfied.
Such assumptions are satisfied when the graph $\mathcal{R}_B$
is a tree with $V_{\rm ext}=\emptyset$ as checked in \cite{banica}. The simplest graph \ali{which} does not enter in this class is the tadpole that we treated in 
\cite{amamnicbis}.

In this section, we analyze different examples of more complex graphs that do not enter into the above class and for which  all these assumptions   hold (or not).

\subsection{$\mathcal R$  is a tree with only one Dirichlet vertex} 

If $\mathcal R$  is a tree with only one Dirichlet vertex (see figure \ref{fig2}), then
$S_d$ is empty, therefore the assumption \rfb{serge:ass0} holds.

\begin{center} 
\includegraphics[scale=0.60]{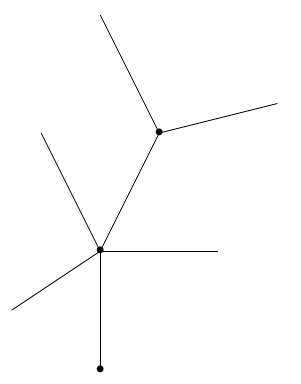}
\captionof{figure}{A tree with only one Dirichlet vertex \label{fig2}}
\end{center}

For this class of examples, the assumption \eqref{lowerbddc}  always holds.

\begin{theorem}
If $\mathcal R$  is a tree with only one Dirichlet vertex, then  the assumption \eqref{lowerbddc} always holds.
\end{theorem}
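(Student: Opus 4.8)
The plan is to exploit that, for a tree with a single Dirichlet vertex, $S_d$ is empty, so by Assumption~\ref{lassumptionserge} one may take $d_d\equiv 1$ and $\mathcal{M}_p=0$; hence $d_c=d=\det\mathcal{D}(z)$. By Theorem~\ref{theo1} (or directly from the structure of the rows of $\mathcal{D}(z)$) this is a Laurent polynomial in the variables $w_j:=e^{iz\ell_j}$, $j\in I_B$, with coefficients independent of $z$, say $d_c(z)=Q\big((w_j)_{j\in I_B}\big)$. Setting $m:=\#I_B$, for real $\mu$ each $w_j$ lies on the unit circle, so the orbit $\{(e^{i\mu\ell_j})_{j}:\mu\in\RR\}$ is contained in the polytorus $\mathbb{T}^{m}$. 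Since $Q$ is continuous and $\mathbb{T}^{m}$ is compact, it suffices to prove that $Q$ has \emph{no} zero on $\mathbb{T}^{m}$: then $\alpha:=\min_{\mathbb{T}^{m}}|Q|>0$ and $|d_c(\mu)|\ge\alpha$ for every $\mu\in\RR$. The point of this reduction is that it is insensitive to commensurability of the $\ell_j$: the real orbit may be dense only in a proper subtorus, but a bound valid on all of $\mathbb{T}^{m}$ covers every case.

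To show $Q\neq0$ on the polytorus the plan is to root the tree at its unique Dirichlet vertex and propagate a normalized admittance from the leaves inward, treating each $w_j$ as an \emph{independent} unimodular variable. For the distal subtree attached through an edge $e$ of length $\ell$ (with $w=e^{iz\ell}$), write the normalized outward derivative at the proximal end as $Y_e\,u(\mathrm{prox})$. An infinite edge gives $Y=1$; at an interior vertex $v$ continuity and normalized Kirchhoff give $S_v=\sum_{\text{children}}Y_{e_i}$; and crossing $e$ yields the M\"obius relation $Y_e=\dfrac{r-1}{r+1}$ with $r=w^{-2}\dfrac{1+S_v}{1-S_v}$. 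The key induction, carried over the finite tree, is that $\Re S_v>0$ and $\Re Y_e>0$ for all $w\in\mathbb{T}^{m}$: indeed $|w|=1$ gives $|r|=\big|\tfrac{1+S_v}{1-S_v}\big|$, whence $|r|^2-1=\dfrac{4\,\Re S_v}{|1-S_v|^2}$ and $\Re Y_e=\dfrac{|r|^2-1}{|r+1|^2}$ both have the sign of $\Re S_v$; starting from $\Re Y=1>0$ on the infinite edges this propagates to the whole tree. Along the way every denominator that occurs ($1-S_v$, since $\Re S_v>0$, and $r+1$, since $|r|>1$) is nonzero on $\mathbb{T}^{m}$.

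At the root edge $e_1$ the Dirichlet condition $u=0$ forces, for a nontrivial solution, $r_{\rm root}=-1$, i.e. $|r_{\rm root}|=1$; but $\Re S_{v_1}>0$ gives $|r_{\rm root}|=\big|\tfrac{1+S_{v_1}}{1-S_{v_1}}\big|>1$ (the factor $w_1^{-2}$ being unimodular), a contradiction. Hence the matrix obtained from $\mathcal{D}(z)$ by replacing each $e^{iz\ell_j}$ with $w_j$ is invertible for every $w\in\mathbb{T}^{m}$, i.e. $Q(w)\neq0$ there, and the reduction of the first paragraph concludes. Degenerate solutions vanishing at some interior vertex are absorbed into the same induction by the clause ``value and normalized derivative simultaneously zero at an endpoint force the two edge amplitudes to vanish'' (which uses only $w\neq0$), so they produce no extra zeros of $Q$.

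The main obstacle, and the place where the tree hypothesis is essential, is precisely the passage from the real line to the polytorus. The flux/energy identity used in Lemma~\ref{lspectrediscret} only yields $d_c(\mu)\neq0$ for individual real $\mu$ and does not survive decoupling the edge phases; the genuinely new input is that the magnitude estimates in the admittance recursion depend on the phases $e^{iz\ell_j}$ only through their modulus $1$, so they remain valid when these phases are treated as independent torus variables (this is where the absence of loops, which would entangle several phases in a single cycle condition, is used). The remaining work is bookkeeping: checking the normalization by which the factors of $z$ cancel so that $\det\mathcal{D}$ really is a Laurent polynomial in the $w_j$, and verifying that $\det\mathcal{D}$ equals the root compatibility expression times the product of the (nonvanishing) denominators, so that the zero set of $Q$ is exactly the root condition.
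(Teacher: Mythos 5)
Your argument is correct in substance but takes a genuinely different route from the paper. The paper proves the statement by induction on the number $n$ of interior vertices, following \cite[Proposition 3.2]{banica}: the inductive property $\mathbf{P}(n)$ bounds $|\det\mathcal{D}_{\mathcal R}(z)|$ from below \emph{and} bounds the ratio $|\det\tilde{\mathcal D}_{\mathcal R}(z)/\det\mathcal{D}_{\mathcal R}(z)|$ by some $r<1$, where $\tilde{\mathcal D}$ is obtained by flipping the outgoing wave on one infinite edge; the induction step is cited from \cite{banica} and only the base case $\mathbf{P}(1)$ (a single interior vertex of valency $\ell\ge3$) is computed explicitly. You replace this by (i) lifting $d_c=\det\mathcal D$ to a Laurent polynomial $Q$ on the polytorus $\mathbb{T}^{m}$ and (ii) a self-contained admittance recursion showing $Q\neq0$ on all of $\mathbb{T}^{m}$, whence a uniform lower bound by compactness. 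The two are morally cousins --- the bound $|\det\tilde{\mathcal D}/\det\mathcal D|<1$ is the reflection-coefficient avatar of your $\Re Y>0$ --- but yours is self-contained (no appeal to an induction step proved elsewhere), and the polytorus formulation is cleaner and visibly insensitive to commensurability of the lengths. Your key computations ($a/b=w^{-2}\tfrac{1+S_v}{1-S_v}$, $\Re Y_e=\tfrac{|r|^2-1}{|r+1|^2}$, $|r|^2-1=\tfrac{4\,\Re S_v}{|1-S_v|^2}$) are correct, and decoupling the phases is legitimate precisely because on a tree each $w_j$ enters the recursion exactly once and only through $|w_j|=1$.

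Two small repairs. First, $\Re S_v>0$ does \emph{not} imply $1-S_v\neq0$ (take $S_v=1$); the recursion nevertheless survives, since $S_v=1$ forces $b=0$ on the parent edge and hence $Y_e=(a-b)/(a+b)=1$, so the M\"obius step should be run projectively, or directly on the amplitudes $a,b$, rather than through the intermediate variable $r$. Second, the closing claim that $\det\mathcal D$ factors as a root-compatibility expression times nonvanishing denominators is unnecessary: since $\mathcal D(w)$ is square, $Q(w)\neq0$ is equivalent to the homogeneous system having only the trivial solution, which is exactly what the amplitude recursion (including your degenerate clause at vertices where $u(v)=0$) establishes.
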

\begin{proof}
We use the same  iterative argument than in 
 \cite[Proposiition 3.2]{banica}. Namely we show that 
 \beqs
{ \bf P(n)}\quad \hbox{  If }\mathcal R\hbox{ has } n  \hbox{ interior vertices, then it holds }
 \\
 \exists c_{\mathcal R}>0 \hbox{ and }  r_{\mathcal R}\in (0,1):
 |\det \mathcal{D}_{\mathcal R}(z)|\geq c_{\mathcal R},
 \hbox{ and }  \left|\frac{\det \tilde{\mathcal{D}}_{\mathcal R}(z)}{\det  \mathcal{D}_{\mathcal R}(z)}\right|< r_{\mathcal R}, \forall z\in \mathbb{R},
 \eeqs
 where $\mathcal{D}_{\mathcal R}(z)$ is the matrix associated with ${\mathcal R}$ introduced
 in section \ref{sec2}, while  $\tilde {\mathcal{D}}_{\mathcal R}(z)$ is the same matrix \ali{as} $\mathcal{D}_{\mathcal R}(z)$, where in the starting system we replace on one infinite edge the term $d_ke^{izx}$ by
 $\tilde d_ke^{-izx}$. 
 
 Note that the proof of the implication $\bf{P(n)}\Rightarrow \bf{P(n+1)}$ is given in \cite{banica}, hence we only need to check that $\bf{P(1)}$ holds. 
If $ \mathcal R$ has one interior vertex of valency $\ell\geq 3$ (hence one finite edge that we can suppose to be 1 by a scaling argument) and  $\ell-1$  infinite edges, by simple calculations,  we notice that
\[
 \det \mathcal{D}_{\mathcal R}(z)=X^{-1}\{(2-\ell)X^2+\ell\},
 \]
 with $X=e^{iz}$. Therefore as $|X|=1$, we deduce that
 \[
  |\det \mathcal{D}_{\mathcal R}(z)|=|(2-\ell)X^2+\ell|\geq \ell-(\ell-2)=2.
  \]
 Similarly simple calculations yield
 \[
 \det \tilde{\mathcal{D}}_{\mathcal R}(z)=X^{-1}\{(4-\ell)X^2+\ell-3\}.
 \]
 Consequently we have
 \[
 \left|\frac{\det \tilde {\mathcal{D}}_{\mathcal R}(z)}{\det  \mathcal{D}_{\mathcal R}(z)}\right|
 =\frac{|(4-\ell)X^2+\ell-3|}{|(2-\ell)X^2+\ell|}.
 \]
Now using the splitting $e^{iz}=\cos z+i\sin z$
 and some trigonometric formulas, we see that
 \[
\frac{|(4-\ell)X^2+\ell-3|^2}{|(2-\ell)X^2+\ell|^2}=\frac{(\ell-3)^2+2(\ell-3)(4-\ell) \cos z+ (4-\ell)^2}{\ell^2+2\ell(2-\ell) \cos z+ (2-\ell)^2},
\]
 hence we are reduced to estimate
 the ratio
 \[
 r(c)=\frac{(\ell-3)^2+2(\ell-3)(4-\ell) c+ (4-\ell)^2}{\ell^2+2\ell(2-\ell) c+ (2-\ell)^2},
 \forall c\in [-1,1].
 \]
 Since this function $r$ is continuous in $[-1,1]$, it suffices to show that
 \be\label{sn:19/02:1}
 r(c)<1, \forall c\in [-1,1],
 \ee
 to deduce that $\bf{P(1)}$ holds. 
 But this estimate is equivalent to
 \[
 (\ell-3)^2+2(\ell-3)(4-\ell) c+ (4-\ell)^2<\ell^2+2\ell(2-\ell) c+ (2-\ell)^2, \forall c\in [-1,1].
 \]
 And by developping the left-hand side and right-hand side, we get equivalenty
 \[
 c<\frac{10\ell -21}{10\ell -24}.
 \]
 We can conclude that \eqref{sn:19/02:1} holds since $\frac{10\ell -21}{10\ell -24}>1$ for all $\ell\geq 3$.
\end{proof}

\subsection{Bounded subgraph with commensurable lengths\label{sscommensurable}} 

In this case by a scaling argument, we can always assume that the lengths of the edges of the bounded graph $\mathcal{R}_B$   are positive integers and consequently $\det \mathcal{D}(z)$ is a polynomial $P$ \ali{in} the variable $X=e^{iz}$, \sn{up to a factor $e^{-inz}$, for some $n\in \mathbb{N}$.
This also means that $d_c$ is \ali{of} the same form as $\det \mathcal{D}(z)$, while  $d_d$ is   a polynomial $P_d$ \ali{in} the variable $X=e^{iz}$}.
Hence \eqref{factdetD(z)} and \eqref{serge:ass0} always hold, while \eqref{serge:ass2}  holds  if all eigenvalues of $S_d$ are simple. Furthermore   the assumption \eqref{lowerbddc} also holds in such a case, because the zeroes of $P$ are either in the 
unit sphere giving rise to the point spectrum or are outside this unit sphere leading to so-called resonances
but \ali{which} remain bounded away from the unit sphere since only a finite number of such eigenvalues exist. \sn{As a consequence, the zeroes of $P_d$ are  $U_\ell= e^{i\theta_\ell}$
with $\theta_\ell\in [0,2\pi)$, $\ell=1, \cdots, L$ (with $L=0$ if $P$ has no zeroes on the  unit sphere),
and the set $S_d$ is given by
\[
S_d= \ds \cup_{\ell=1}^L \{(\theta_\ell+2k\pi)^2: k\in \mathbb{N}\}.
\]
In such a general configuration, we cannot check the Assumption
\ref{lassumptionsergeYB}, but it can be certainly done for some particular cases, see the examples below. On the other hand, the assumption \ref{lassumptionsergeYj} is always valid if we assume that all eigenvalues of $S_d$ are simple, in other words if $\theta_\ell\ne \theta_{\ell'}$ for $\ell\ne \ell'$.
Indeed by fixing $j\in I_U$,
 we see that
\[
\mathcal{M}_p(z) R F(z,f)=
\frac{1}{2iz}\left(\int_0^{\infty} e^{iz y} f_j(y) \,dy\right)
\mathcal{M}_p(z) R Y_j,
\]
where $Y_j=\left((0, 0))_{k\in I_B}, (\delta_{kj})_{k\in I_U}\right)^\top$. 
But Corollary \ref{lemma:annulationMp} leads to
\[
\left(\int_0^{\infty} e^{i\lambda_m y} f_j(y) \,dy\right)
\mathcal{M}_p(\lambda_m) R Y_j=0, \forall m\in \mathbb{N},
\]
for all $f_j\in L^2_{\rm loc } (0,\infty)$. For each $m$, by choosing $f_j=e^{-i\lambda_m y}\eta(y)$, with $\eta\in \mathcal{D}(0,\infty)$
with $0\leq \eta \leq 1$ and different from zero,  
the first factor is different from zero, and we deduce that
\[
\mathcal{M}_p(\lambda_m) R Y_j=0, \forall m\in \mathbb{N}.
\]
Each component of the vector $\mathcal{M}_p(z) R Y_j$
being a  polynomial \ali{in} the variable $X=e^{iz}$, up to a factor $e^{-inz}$ for some $n\in \mathbb{N}$, we deduce that this polynomial is zero at each $U_\ell$, and therefore divisible by $P_d$ , \ali{which}
 shows that the Assumption \ref{lassumptionsergeYj} holds.
}

%Citer et comparer avec \cite{ExnerLipovsky-10}.

Let us now consider some examples where the bounded graph has
a cycle and/or with some incommensurable lengths.

\subsection{A triangle with infinite half-lines at the corners} 

We assume that $\mathcal{R}_B$ is a triangle with three edges of length $1$ and assume that 
an infinite edge is attached at each vertex, see Fig. \ref{fig4}.
In that case, simple calculations show that
\[
S_d = \left\{4\pi^2 n^2, n \in  \sn{\mathbb{N}^*} \right\},
\]
each eigenvalue being simple.  Consequently by subsection \ref{sscommensurable}, all the assumptions \eqref{factdetD(z)}, \eqref{serge:ass0},  \eqref{serge:ass2}, \sn{the assumption \ref{lassumptionsergeYj}} and \eqref{lowerbddc}   hold.

\begin{center} 
\includegraphics[scale=0.60]{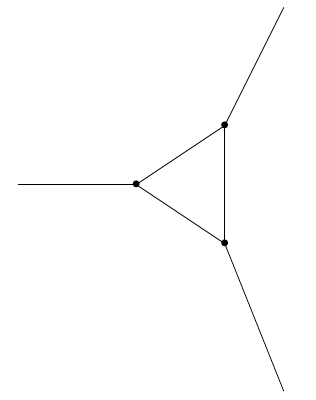}
\captionof{figure}{A triangle graph \label{fig4}}
\end{center}

\subsection{A tadpole with 2 heads \label{ssdouble tadpole}} 

This graph is composed of one head $e_1$ of length $1$ and the second one  $e_2$ of length $\ell$ attached at one point to a half-line, see Figure \ref{fig3}.

\begin{center} 
\includegraphics[scale=0.60]{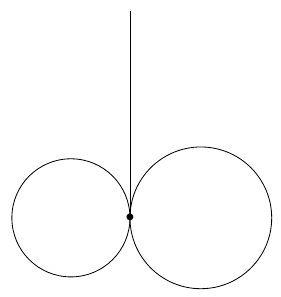}
\captionof{figure}{A tadpole with 2 heads\label{fig3}}
\end{center}

In such a case, it is not difficult to see that
\[
S_d=\{4k^2\pi^2: k\in \mathbb{N}\}\cup \left\{ \frac{4k^2\pi^2}{\ell^2}: k\in \mathbb{N}\right\},
\]
if $\ell$ is an irrational number or if  $\ell=\frac{p}{q}$ is a rational number with $p$ and $q$ two positive relatively prime numbers
such that $p$ or $q$ is even, while 
\[
S_d=\{4k^2\pi^2: k\in \mathbb{N}\}\cup \left\{ \frac{4k^2\pi^2}{\ell^2}: k\in \mathbb{N}\right\}\cup \{((2m-1) q\pi)^2: m\in \mathbb{N}\}
\]
if $\ell=\frac{p}{q}$ is a rational number with $p$ and $q$ two positive relatively prime odd numbers.
For the first sequence,  for all $k\in \mathbb{N}$, the eigenvector $u^{(1,k)}$ associated with $4k^2\pi^2$ is \ali{of} the form  
\[
u^{(1,k)}=(\sin(2k\pi \cdot ),0,0)^\top ,
\]
while for the second sequence, for all $k\in \mathbb{N}$, the eigenvector $u^{(2,k)}$ associated with $\frac{4k^2\pi^2}{\ell^2}$ is of the form  
\[
u^{(2,k)}=\left(0,\sin(\frac{2k\pi}{\ell} \cdot ),0\right)^\top.
\]
In case of a rational number $\ell=\frac{p}{q}$ with $p$ and $q$ two positive relatively prime odd numbers, the eigenvector associated with $((2m-1) q\pi)^2$ is of the form
\[
u^{(3,m)}=( \sin((2m-1) q\pi \cdot), - \sin((2m-1) q\pi \cdot), 0),
\]
hence, this eigenvalue is simple.

Consequently if $\ell$ is a rational number, then the eigenvalues 
$(2k\pi q)^2$ are double and the other ones are simple,
while if $\ell$ is irrational, all eigenvalues are simple.

Let us now check  that 
\ali{
equality \rfb{factdetD(z)} 
of Assumption \ref{lassumptionserge}
}
holds. Indeed 
\sn{by setting $I_B=\{1,2\}$, $I_U=\{3\}$,
$\ell_1=1$ and $\ell_2=\ell$, we see that
\rfb{systserge} holds with}
%\[
%\mathcal{D}(z)=
%\left(
%\begin{array}{ccccc}
%1&1&0&0&-1
%\\
%0&0&1&1&-1
%\\
%1-X&1-X^{-1}&0&0&0
%\\
%0&0&1-X^\ell&1-X^{-\ell}&0
%\\
%1-X&-(1+X^{-1})&1-X^\ell&-(1+X^{-\ell})&-1
%\end{array}
%\right),
%\]
\[
\mathcal{D}(z)=
\left(
\begin{array}{ccccc}
1&1&0&0&-1
\\
0&0&1&1&-1
\\
1-X&1-X^{-1}&0&0&0
\\
0&0&1-X^\ell&1-X^{-\ell}&0
\\
X-1&1-X^{-1}&X^\ell-1&1-X^{-\ell}&1
\end{array}
\right),
\]
where $X=e^{-iz}$ and for shortness $X^\ell=e^{-i\ell z}$, \sn{
and
\be\label{defbfzdoubletapole}
b(z,f)=
\left(
\begin{array}{ccccc}
F_{3+}(z,f)-F_{1+}(z,f)
\\
F_{3+}(z,f)-F_{2+}(z,f)
\\
F_{1-}(z,f)-F_{1+}(z,f)
\\
F_{2-}(z,f)-F_{2+}(z,f)
\\
F_{1+}(z,f)+F_{1-}(z,f)+F_{2+}(z,f)+F_{2-}(z,f)+F_{3+}(z,f)
\end{array}
\right),
\ee
$F_{j\pm}(z,f)$ being defined \ali{at the beginning of} section 
\ref{sec2}.} Hence we easily check that
\[
\det \mathcal{D}(z)=(1-X) (X^\ell-1) X^{-(1+\ell)} (-3+ X+ X^{\ell}+5 X^{1+\ell}).
\]
If $\ell$ is an irrational number or if  $\ell=\frac{p}{q}$ is a rational number with $p$ and $q$ two positive relatively prime numbers
such that $p$ or $q$ is even,  then \eqref{factdetD(z)} holds with
\[
d_p(z)=(1-e^{-iz}) (e^{-iz\ell}-1),
\]
and 
\[
d_c(z)=e^{iz(1+\ell)} (-3+ e^{-iz}+e^{-iz\ell}+5 e^{-iz(1+\ell)}),
\]
since in this case
the sole roots $z$ of $\det \mathcal{D}(z)$ are the zeroes of 
$e^{-iz}-1$ and of $e^{-iz\ell}-1$. In the case of a rational number
$\ell=\frac{p}{q}$ with $p$ and $q$ two positive relatively prime odd numbers, we set
$Y=e^{-i\frac{z}{q}}$, and notice that
\[
Y^q=e^{-iz} \hbox{ and } Y^p=e^{-iz\ell},
\]
which allows to transform $(-3+ e^{-iz}+e^{-iz\ell}+5 e^{-iz(1+\ell)})$ into 
the polynomial
\[
Q(Y)=-3+ Y^q+ Y^p+5 Y^{p+q}.
\]
Since $Q(-1)=0$, we deduce that $Q$ is divisible by $Y+1$, hence there exists a polynomial $R$ of degree $p+q-1$ such that
 \[
Q(Y)=(Y+1)R(Y).
\]
Furthermore, as $Q'(-1)=-4(p+q)$ is different from zero, $R(-1)$ is different from zero.
Coming back to the original variable $z$, we have found that
\[
-3+ e^{-iz}+e^{-iz\ell}+5 e^{-iz(1+\ell)}
=(1+e^{-i\frac{z}{q}}) R(e^{-i\frac{z}{q}}).
\]
This proves that \eqref{factdetD(z)} holds with
\[
d_d(z)=(1-e^{-iz}) (e^{-iz\ell}-1)(1+e^{-i\frac{z}{q}}),
\]
and 
\[
d_c(z)=e^{iz(1+\ell)} R(e^{-i\frac{z}{q}}).
\]

Direct calculations show that
\[
\operatorname{adj}(\mathcal{D}(z))=X^{-(1+\ell)} \big((X-1) D_1(z)+(1-X^\ell) D_2(z)+(1-X) (X^\ell-1) D_3(z)\big),
\]
with

\[
D_1(z)= 
\left(
\begin{array}{ccccc}
       0   &   0      
          &0                           & 0   &0\\
        0  &  0
        &0	            &       0
      &  0\\
          0      &   0                      &       0  
          &q_{\ell,2}(X)  &   0\\
     0	     &   0                         
     & 0	&s_\ell(X)
      &  0\\
0&0          &    0  
&            0&	0
\end{array}
\right),
\]
\[
D_2(z)= 
\left(
\begin{array}{ccccc}
      0  &    0   
          &q_{\ell,1}(X)                           & 0   &0\\
        0   &   0
        &r_\ell(X)            &       0
      & 0\\
          0	      &   0                      &    0       
          &0	  &   0\\
     0	     &  0                      
     & 0&0   
      &  0\\
0&0           &    0  
&           0&	0
\end{array}
\right),
\]
and
\[
\!\! D_3(z)= \!
\left(
\begin{array}{ccccc}
       3 X^\ell-1   &    2(1- X^\ell)       
          &0                        &  X^\ell-1    & X^\ell+1\\
        X (3X^\ell-1)    &   2X(1-X^\ell)
        &0	            &       X(X^\ell-1)
      &  X(1+ X^\ell)\\
          2(1-X)	      &    3 X-1	                      &        X-1       
          &0  &    X+1\\
     2X^\ell(1-X)	     &   X^\ell (3 X -1)                           
     & X^\ell (X-1)	&0 
      &  X^\ell (X +1)\\
2(X^\ell+1)(1-X) &2(X+1)(1-X^\ell)             &     (X^\ell+1)(X-1)      
&            (X+1)(X^\ell-1)&	(X+1) (X^\ell+1)
\end{array}
\right),
\]
where
$$
q_{\ell,1}(X)=-4   X^{1+\ell}+X^\ell+1, q_{\ell,2}(X)=-X+4   X^{1+\ell}-1,
$$
$$r_\ell(X)= X^2-2 X+  X^{2+\ell}+2   X^{1+\ell},
s_\ell(X)=- X^{1+2\ell}-X^{2\ell}-2  X^{1+\ell}+2 X^\ell.
$$
 
Hence, if $\ell$ is an irrational number or  if  $\ell=\frac{p}{q}$ is a rational number with $p$ and $q$ two positive relatively prime numbers
such that $p$ or $q$ is even,  the assumptions \eqref{serge:ass0} and \eqref{serge:ass2} hold (with 
$\mathcal{M}_c(z)=X^{-(1+\ell)} D_3(z)$ and $\mathcal{M}_p(z)=X^{-(1+\ell)} ((X-1) D_1(z)+(1-X^\ell) D_2(z))$, replacing $X$ (resp. $X^\ell$)
by $e^{-iz}$ (resp. $e^{-i\ell z}$)).

On the contrary if  $\ell=\frac{p}{q}$  is a rational number with $p$ and $q$ two positive relatively prime odd numbers, we need to extract a factor $(1+e^{-i\frac{z}{q}})$ from $D_3(z)$. This is easily obtained again by using the change of unknown $Y=e^{-i\frac{z}{q}}$, and yields
\[
Y^q=e^{-iz}=X \hbox{ and } Y^p=e^{-iz\ell}=X^{\ell}. 
\]
Consequently only the factors $X+1$ and $X^\ell+1$ contribute to this phenomenon since
\beq\label{sn:3/3:1}
X+1=Y^q+1=(Y+1) r_q(Y), 
\\
X^\ell+1=Y^p+1=(Y+1) r_p(Y), 
\label{sn:3/3:2}
\eeq
where
\[
r_p(Y)=\sum_{m=0}^{p-1}(-1)^m Y^{p-1-m}.
\]
This means that 
\[
D_3(z)=D_4(z)+(1+Y) D_5(z),
\]
where
\[
D_4(z)= 
\left(
\begin{array}{ccccc}
       3 X^\ell-1   &    2(1- X^\ell)       
          &0                        &  (X^\ell-1)    & 0\\
        X (3X^\ell-1)    &   2X(1-X^\ell)
        &0	            &       X(X^\ell-1)
      &  0\\
          2(1-X)	      &    (3 X-1)	                      &        (X-1)          
          &0  &   0\\
     2X^\ell(1-X)	     &   X^\ell (3 X -1)                           
     & X^\ell (X-1)	&0 
      &  0\\
0&0            &    0 
&          0&	0
\end{array}
\right),
\]
\[
D_5(z)= 
\left(
\begin{array}{ccccc}
       0  &    0     
          &0                        & 0  &r_p(Y)\\
       0   &   0
        &0	            &      0
      &  Xr_p(Y)\\
          0      &   0                    &        0      
          &0  &    r_q(Y)\\
     0     &   0                        
     & 0	&0 
      &  X^\ell r_q(Y)\\
2r_p(Y))(1-X) &2r_q(Y))(1-X^\ell)            &     r_p(Y)(X-1)      
&            r_q(Y)(X^\ell-1)&	(Y+1)  r_p(Y)r_q(Y)
\end{array}
\right),
\]
And again the assumptions \eqref{serge:ass0} and \eqref{serge:ass2} hold with 
$\mathcal{M}_c(z)=X^{-(1+\ell)} D_5(z)$.

Consequently in the case of a rational number $\ell$ all the assumptions \eqref{factdetD(z)}, \eqref{serge:ass0},  \eqref{serge:ass2}, and \eqref{lowerbddc} are satisfied.

\sn{Let us now check that
Assumption \ref{lassumptionsergeYj} always holds.
Indeed, take $f_1=0$ and $f_2=0$ in 
\rfb{systserge}. In such a case, as $F_{j\pm}(z,f)=0$, 
for $j=1$ and 2,   we see that its \ali{right hand side} 
$b(z,f)$ given by \rfb{defbfzdoubletapole}  reduces to
\[
b(z,f)=F_{3+}(z,f)
\left(
\begin{array}{ccccc}
1
\\
1
\\
0
\\
0
\\
1
\end{array}
\right).
\]
If $\ell$ is an irrational number or  if  $\ell=\frac{p}{q}$ is a rational number with $p$ and $q$ two positive relatively prime numbers
such that $p$ or $q$ is even, we directly deduce that
$\mathcal{M}_p(z)b(z,f)=(0,0,0,0,0)^\top$, which implies that
Assumption \ref{lassumptionsergeYj} \ali{holds} with $X_3(z)=0$.
}

\sn{On the other hand if $\ell=\frac{p}{q}$  is a rational number with $p$ and $q$ two positive relatively prime odd numbers, we see that (with the notation above)
\[
\mathcal{M}_p(z)b(z,f)=\frac{F_{3+}(z,f)}{1+Y} D_4(z)
\left(
\begin{array}{ccccc}
1 \\1 \\0 \\0 \\1
\end{array}
\right)
=
\frac{F_{3+}(z,f)}{1+Y} \left(
\begin{array}{ccccc}
1+X^\ell
\\
X(1+X^\ell)
\\
1+X
\\
X^\ell(1+X)
\\
0
\end{array}
\right).
\]
By the identities \eqref{sn:3/3:1}-\eqref{sn:3/3:2}, we deduce that
\[
\mathcal{M}_p(z)b(z,f)=
F_{3+}(z,f) \left(
\begin{array}{ccccc}
r_p(Y)
\\
Xr_p(Y)
\\
r_q(Y)
\\
X^\ell r_q(Y)
\\
0
\end{array}
\right),
\]
which proves that Assumption \ref{lassumptionsergeYj} holds
with 
\[
X_3(z)=\left(
\begin{array}{ccccc}
r_p(e^{-i\frac{\ell z}{p}})
\\
e^{-iz} r_p(e^{-i\frac{\ell z}{p}})
\\
r_q(e^{-i\frac{\ell z}{p}})
\\
e^{-i\ell z} r_q(e^{-i\frac{\ell z}{p}})
\\
0
\end{array}
\right),
\]
as $Y=e^{-i\frac{z}{q}}=e^{-i\frac{\ell z}{p}}$.}

\sn{Let us now check whether  
assumption 
\ref{lassumptionsergeYB} holds or not.
Indeed, by  taking $f_3=0$ in 
\rfb{systserge} as $F_{3+}(z,f)=0$,    we see that its \ali{right hand side} 
$b(z,f)$  given by \rfb{defbfzdoubletapole}  reduces to
\[
b(z,f)=
\left(
\begin{array}{ccccc}
-F_{1+}(z,f)
\\
-F_{2+}(z,f)
\\
F_{1-}(z,f)-F_{1+}(z,f)
\\
F_{2-}(z,f)-F_{2+}(z,f)
\\
F_{1+}(z),f+F_{1-}(z,f)+F_{2+}(z,f)+F_{2-}(z,f)
\end{array}
\right).
\]}

\sn{If  $\ell$ is an irrational number or  if  $\ell=\frac{p}{q}$ is a rational number with $p$ and $q$ two positive relatively prime numbers
such that $p$ or $q$ is even, we directly deduce that
\[
\mathcal{M}_p(z)b(z,f)=(F_{1-}(z,f)-F_{1+}(z,f))
\left(
\begin{array}{ccccc}
q_{\ell, 1}(X)
\\
r_{\ell}(X)
\\
0
\\
0
\\
0
\end{array}
\right)
+(F_{2-}(z,f)-F_{2+}(z,f))
\left(
\begin{array}{ccccc}
0
\\
0
\\
q_{\ell, 2}(X)
\\
s_{\ell}(X)
\\
0
\end{array}
\right).
\]
}

\sn{Now by the definition of  $F_{j\pm}(z,f)$,  we see that
\[
F_{1-}(z,f)-F_{1+}(z,f)=\frac{1}{2iz} \left( \int_0^1 e^{  i z (1-y)}
f_1(y)dy-\int_0^1 e^{  i z y}
f_1(y)dy\right),
\]
and by the change of variable $x=1-y$ in the first integral, we get
\[
F_{1-}(z,f)-F_{1+}(z,f)=\frac{1}{2iz} \left( \int_0^1 e^{  i z x}
f_1(1-x)dx-\int_0^1 e^{  i z y}
f_1(y)dy\right).
\] 
Similarly, we have
\[
F_{2-}(z,f)-F_{2+}(z,f)=
\frac{1}{2iz} \left( \int_0^\ell e^{  i z x}
f_2(\ell-x)dx-\int_0^\ell e^{  i z y}
f_2(y)dy\right).
\]
Finally in this case $Y_B$ is spanned by the functions
\ali{$$f_{k_1}=(\cos (2k_1\pi x), 0, 0)^\top$$}
and 
\ali{
$$
f_{k_2}= \left(0,\cos (\frac{2k_2\pi x}{\ell}), 0\right)^\top ,
$$
}
with $k_1, k_2\in \mathbb{N}$.
\\
%%%%%%%%%%%%  Argument de Serge  %%%%%%%%%%%%%%%%
\sn{
Let us prove this assertion:}
\\
\ali{$Y_D$ is spanned by $(\sin(2k_1 .), 0, 0)^\top$ and $(0, \sin(2k_2 ./\ell), 0, 0)^\top), k_1, k_2\in N^*$
and $Y_B$ should be orthogonal to $Y_D$ and to all  fonctions of the form
$(0, 0, f)$ with $f\in L^2(0,\infty)$.
Thus the third component of the elements of $Y_B$ is zero. But the set
$$\{\sin(2k_1 .)\}_{k_1\in N^*}\cup \{\cos(2k_1 .)\}_{k_1\in N}$$
resp.
$$\{\sin(2k_1 ./\ell)\}_{k_1\in N^*}\cup \{\cos(2k_1 ./\ell)\}_{k_1\in N}$$
forms an orthonormal basis of $L^2(0,1)$ (resp. $L^2(0,\ell)$) and therefore we obtain the characterisation of $Y_B$ stated above.
}
\\
%%%%%%%%%%%%%%   Fin Argument de Serge  %%%%%%
%
But for the first family of functions, we see that 
$$F_{2-}(z,f_{k_1})-F_{2+}(z,f_{k_1})=0 , $$
 while
\be\label{sn:8/4:1}
F_{1-}(z,f_{k_1})-F_{1+}(z,f_{k_1})=\frac{1}{2iz} \left( \int_0^1 e^{  i z x}
\cos (2k_1\pi (1-x))dx-\int_0^1 e^{  i z y}
\cos (2k_1\pi y)dy\right)
=0.
\ee
The same phenomenon occurs for the second family, and therefore
Assumption \ref{lassumptionsergeYB} holds.}

\sn{Unfortunately this cancellation phenomenon does not appear 
if $\ell=\frac{p}{q}$  is a rational number with $p$ and $q$ two positive relatively prime odd numbers, hence Assumption \ref{lassumptionsergeYB} does not hold in that case.}

\sn{Now}, if $\ell$ is an irrational number, even if
\sn{$d_c(z)$} is different from zero for all real numbers $z$ (and consequently Assumption \ref{lassumptionserge} holds),
 let us check that \eqref{lowerbddc} does not hold. Indeed by the Theorem from \cite{Hartman-49} (see also \cite[p. 18]{Elsner:96}) with $a=b=1$ and $s=2$,  there   exist two sequences of   positive and odd integers $p_n$ and $q_n$ (that both tend to infinity) such that
\be\label{sn:19/02:2}
\left|\ell-\frac{p_n}{q_n}\right|<\frac{8}{q_n^2}, \forall n\in \mathbb{N}.
\ee
Then  we take
the sequence
\[
z_n=q_n\pi.
\]
With this choice we see that
\[
d_c(z_n)=-4e^{iz_n(1+\ell)} (1+e^{-iz_n\ell}).
\]
But $z_n \ell=q_n\pi \ell$ and multiplying \eqref{sn:19/02:2} by $q_n\pi$, we deduce that
\[
z_n \ell = p_n\pi +r_n,
\]
with $|r_n|<\frac{8\pi}{q_n}$, that hence tends to zero 0 as $n$ goes to infinity.
Consequently, one has 
\begin{equation}
\label{convergence vers zero de d_c}
d_c(z_n)=-4e^{iz_n(1+\ell)} (1-e^{-ir_n}) \to 0, \hbox{ as } n\to \infty.
\end{equation}
 
Before passing to another example, let us study an interesting particular case, the case when the initial data are zero on the two heads. We first state a result for the resolvent.

\begin{theorem}\label{ThmFelix}
Assume that $e_3$ is the infinite edge of the graph.
Assume that $f=(f_1,f_2, f_3)\in L^2(\mathcal{R})$ with $f_1=f_2=0$,
then for any $z\in \mathbb{C}_+$, $u=(z^2-H)^{-1} f$ satisfies
\begin{equation}\label{formuleu3}
u_3(x)=\int_0^\infty K_{33}(x,y, z^2) f_3(y)\,dy,
\end{equation}
where the kernel $K_{33}$ is defined by
\[
K_{33}(x,y, z^2) =\frac{1}{2iz}(e^{-iz|x-y|} +
\gamma(z)    e^{iz(y-x)}),
\]
with  (recalling that $X= e^{-iz}$)
\[
\gamma(z)=\frac{-3X^{1+\ell}+X^{\ell}+X+5}{-3+ X+ X^{\ell}+5 X^{1+\ell}},
\]
 that satisfies
 \begin{equation}\label{valeurabsoluegamma}
|\gamma(z)|=1.
\end{equation}
\end{theorem}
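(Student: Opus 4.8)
The plan is to bypass the general machinery of Sections~\ref{sec2}--\ref{proofm} and simply solve the $5\times5$ system \eqref{systserge} by hand, exploiting that the data lives only on the infinite edge. Setting $f_1=f_2=0$ makes all of $F_{1\pm},F_{2\pm}$ vanish, so the right-hand side \eqref{defbfzdoubletapole} collapses to $b(z,f)=F_{3+}(z,f)\,(1,1,0,0,1)^\top$. Since $u_3$ sees the unknown vector only through $d_3$, it suffices to solve for $d_3$: writing $d_3=\gamma(z)F_{3+}(z,f)$ and inserting it into the half-line ansatz \eqref{sergeu2}, together with the identity $F_{3+}(z,f)=\frac{1}{2iz}\int_0^\infty e^{izy}f_3(y)\,dy$, gathers the two terms under a single integral and yields \eqref{formuleu3} with a kernel of the general shape \eqref{kernel22}. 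Thus the entire theorem reduces to computing $\gamma(z)$ and checking $|\gamma|=1$.

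First I would eliminate $c_1,c_2$. Rows~3 and~4 of $\mathcal{D}(z)$ read $(1-X)c_1+(1-X^{-1})d_1=0$ and $(1-X^\ell)c_2+(1-X^{-\ell})d_2=0$, and using $1-X^{-1}=-X^{-1}(1-X)$ these give the clean relations $c_1=X^{-1}d_1$ and $c_2=X^{-\ell}d_2$. Rows~1 and~2, namely $c_k+d_k-d_3=F_{3+}$, then give $d_1=\frac{X(d_3+F_{3+})}{X+1}$ and $d_2=\frac{X^\ell(d_3+F_{3+})}{X^\ell+1}$. The decisive simplification is in row~5: the combinations $(X-1)c_1+(1-X^{-1})d_1$ and $(X^\ell-1)c_2+(1-X^{-\ell})d_2$ collapse (via $c_1=X^{-1}d_1$, $c_2=X^{-\ell}d_2$) to $2(1-X^{-1})d_1$ and $2(1-X^{-\ell})d_2$, so row~5 becomes $2(1-X^{-1})d_1+2(1-X^{-\ell})d_2+d_3=F_{3+}$. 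Substituting $d_1,d_2$ turns this into $(d_3+F_{3+})A+d_3=F_{3+}$ with $A=\frac{4(X^{1+\ell}-1)}{(X+1)(X^\ell+1)}$, whence $d_3=F_{3+}\frac{1-A}{1+A}$. Putting $1\pm A$ over the common denominator $(X+1)(X^\ell+1)$ and cancelling it produces precisely $\gamma(z)=\frac{-3X^{1+\ell}+X+X^\ell+5}{5X^{1+\ell}+X+X^\ell-3}$, the stated expression. I would stress that this derivation uses no arithmetic property of $\ell$, which is exactly why the formula is valid for all length ratios; moreover for $z\in\CC_+$ one has $|X|=e^{\Im z}>1$, so the denominator, being the factor $-3+X+X^\ell+5X^{1+\ell}$ of $\det\mathcal{D}(z)$, is nonzero by Lemma~\ref{lspectrediscret}, and $\gamma$ is well defined.

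Finally, for the modulus identity \eqref{valeurabsoluegamma} I would restrict to $z\in\RR$, where $|X|=1$ and hence $\bar X=X^{-1}$. Denoting by $N$ and $D$ the numerator and denominator of $\gamma$, the key observation is the conjugation relation $D=X^{1+\ell}\,\bar N$: indeed $X^{1+\ell}\bar N=X^{1+\ell}\bigl(-3X^{-(1+\ell)}+X^{-1}+X^{-\ell}+5\bigr)=-3+X^{\ell}+X+5X^{1+\ell}=D$. Since $|X^{1+\ell}|=1$, this gives $|D|=|\bar N|=|N|$ and therefore $|\gamma(z)|=1$ wherever $D\neq0$.

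The computation is entirely explicit, so there is no genuine conceptual obstacle; the only points requiring care are the algebraic bookkeeping in the elimination, where the identity $1-X^{-1}=-X^{-1}(1-X)$ and the doubling phenomenon in row~5 are what keep the expressions manageable, and the factorization $D=X^{1+\ell}\bar N$, which is what renders the unitarity $|\gamma|=1$ transparent.
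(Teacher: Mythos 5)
Your proof is correct and is essentially the paper's own argument carried out in full: the paper's proof of \eqref{formuleu3} merely asserts that one solves the $5\times5$ system \eqref{systserge} with the special right-hand side, which is exactly the elimination you perform, and your unitarity argument $D=X^{1+\ell}\bar N$ for real $z$ (where $|X|=1$) is precisely the paper's conjugation trick, stated with the extra care of restricting to $z\in\RR$ where it actually holds. The only point worth flagging is that your computation produces the kernel $\frac{1}{2iz}\bigl(e^{iz|x-y|}+\gamma(z)e^{iz(x+y)}\bigr)$, which is the form consistent with the ansatz \eqref{sergeu2}, with \eqref{kernel22}, and with $L^2$-decay for $\Im z>0$; the signs printed in the statement of the theorem ($e^{-iz|x-y|}$ and $e^{iz(y-x)}$) appear to be typographical errors rather than a defect of your derivation.
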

\begin{proof}
The proof of the identity \eqref{formuleu3}
consists in using the approach of section \ref{sec2}
in particular by solving the $5\times 5$ system \eqref{systserge}
with the specify right-hand side.

To show the property \eqref{valeurabsoluegamma} we may notice that
\[
\gamma(z)=X^{1+\ell}
\frac{-3+X^{-1}+X^{-\ell}+5 X^{1+\ell}}{-3+ X+ X^{\ell}+5 X^{1+\ell}}.
\]
Since $X^{-1}= e^{iz}$, we remark that $X^{-1}= \bar X$, therefore
\[
-3+X^{-1}+X^{-\ell}+5 X^{1+\ell}=
\overline{-3+ X+ X^{\ell}+5 X^{1+\ell}},
\]
in other word the numerator of the previous fraction is the conjuguate of its denominator. This directly leads to 
 \[
|\gamma(z)|=|X^{1+\ell}|=1.
\]
\end{proof}

The property \eqref{valeurabsoluegamma} allows us to conjecture that the $L^1-L^\infty$ decay in $|t|^{-1/2}$ for the third component of the solution of Schr\"odinger equation with  an initial datum that is zero on the heads remains valid. Since we need to estimate the quantity
(see the arguments of section 3)
\[
\left|\int_{\sqrt{a}}^{\sqrt{b}} e^{it \mu^2} \gamma(\mu))    e^{i\mu(y-x)}\,d\mu\right|,
\]
the  variant of 
van der Corput Lemma
\cite[Corollary, p. 334]{Zygmung:99} is useless in this case since 
the derivative of $\gamma$ is not integrable due to the property \eqref{convergence vers zero de d_c}, therefore another tool should be used to prove this conjecture.

Note that the previous theorem also allows to give an  explicit expression of the solution of the Schr\"odinger equation on the graph on the infinite edge once the intial data on the heads are   zero and the one  on the infinite edge is  compactly supported.
Namely we have

\begin{theorem}\label{Thm5Felix}
Assume that $e_3$ is the infinite edge of the graph.
Assume that $u_0=(0,0, u_{0,3})\in L^2(\mathcal{R})$ with $u_{0,3}
\in \mathcal{D}(0,\infty)$,
then the solution $u=(u_1,u_2,u_3)$ of Schr\"odinger equation on the graph  $G$ satisfies
\begin{equation}\label{formuleu3Schrodinger}
u_3(t, x)=\mathcal{F}^{-1}\left(e^{-i t \cdot^2}(1+\gamma)
\mathcal{F} (\tilde u_{0,3})\right)(x), \forall x>0, t\in \mathbb{R},
\end{equation}
where $\tilde u_{0,3}$ means the extension of $u_{0,3}$ by zero in $(-\infty,0)$ and
\[
\mathcal{F} (\tilde u_{0,3})(z)=\frac{1}{\sqrt{2\pi}}
\int_\mathbb{R} e^{-izx} \tilde u_{0,3}(x)\,dx, \forall z\in \mathbb{R}.
\]
denotes the usual  Fourier transform of $\tilde u_{0,3}$.
\end{theorem}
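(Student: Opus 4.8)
The plan is to compute $u_3(t,\cdot)=(e^{-itH}u_0)_3$ directly from the explicit resolvent of Theorem~\ref{ThmFelix}, exploiting that this resolvent is available for \emph{every} $z\in\mathbb{C}_+$ and that its boundary values on $\mathbb{R}$ stay bounded because $|\gamma|=1$; this is precisely what lets us avoid the frequency-band restriction and the hypothesis \eqref{lowerbddc} that the general machinery of Section~\ref{proofm} would require. First I would record two elementary reductions. Since $u_{0,3}\in\mathcal{D}(0,\infty)$, its extension $\tilde u_{0,3}$ by zero lies in $C_c^\infty(\mathbb{R})$, so $\mathcal{F}\tilde u_{0,3}$ is a Schwartz function and all the oscillatory integrals below converge absolutely; and since every eigenvector of $H$ is supported in $\mathcal{R}_B=e_1\cup e_2$ while $u_0$ vanishes there, $u_0$ is orthogonal to $Y_D$, whence $u_0=P_{ac}u_0$.

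Next I would apply Stone's formula in the form recalled in the proof of Theorem~\ref{res.idYj}, namely $e^{-itH}P_{ac}u_0=\frac{1}{2\pi i}\int_0^\infty e^{-it\lambda}\bigl[R(\lambda+i0,H)-R(\lambda-i0,H)\bigr]u_0\,d\lambda$, and take its third component using $(R(z^2,H)u_0)_3(x)=\int_0^\infty K_{33}(x,y,z^2)u_{0,3}(y)\,dy$ from Theorem~\ref{ThmFelix}. The two boundary values correspond to $z=\sqrt\lambda=:\mu>0$ for $\lambda+i0$ and to $z=-\mu$ for $\lambda-i0$; both limits exist pointwise and, thanks to $|\gamma(\mu)|=1$, are bounded uniformly in $\mu$. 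I would also record the symmetry $\gamma(-\mu)=\overline{\gamma(\mu)}$, which follows from $X=e^{-iz}\mapsto\bar X$ under $z\mapsto-z$ together with the fact that the numerator and denominator of $\gamma$ are complex conjugates of one another on $\mathbb{R}$.

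Then I would substitute $\lambda=\mu^2$, $d\lambda=2\mu\,d\mu$: the prefactor $1/(2i\mu)$ in $K_{33}$ cancels the Jacobian and the Stone constant, and the $z=\mu$ and $z=-\mu$ contributions combine into a single integral over $z\in\mathbb{R}$. Using $\gamma(-z)=\overline{\gamma(z)}$, the two free exponentials reassemble the standard free-line symbol $e^{-itz^2}$ while the two reflection exponentials reassemble the factor $\gamma(z)$, producing the Fourier multiplier $e^{-itz^2}(1+\gamma(z))$. Recognizing $\int_0^\infty e^{-izy}u_{0,3}(y)\,dy=\sqrt{2\pi}\,\mathcal{F}\tilde u_{0,3}(z)$ and the remaining $z$-integral as $\sqrt{2\pi}\,\mathcal{F}^{-1}$ then yields exactly \eqref{formuleu3Schrodinger}.

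The main obstacle is the first step: justifying the limiting-absorption passage $\varepsilon\to0$ for \emph{all} $\lambda>0$ up to infinity. The arguments of Section~\ref{proofm} break down in the incommensurable case because $d_c$ is not bounded below (cf. \eqref{convergence vers zero de d_c}), so one cannot invoke them here; instead the explicit unimodularity $|\gamma|=1$ is precisely the structural input that saves the day, giving a bound on $K_{33}(x,y,\lambda\pm i\varepsilon)$ that is uniform in $\varepsilon$ and in the frequency (in particular uniform over the frequencies where $d_c$ becomes small, which is the obstruction to \eqref{lowerbddc}). Granting this, the interchange of the $z$- and $y$-integrals and the dominated passage to the limit are routine, controlled by the rapid decay of $\mathcal{F}\tilde u_{0,3}$ and the boundedness of $\gamma$; the only remaining care is the sign and branch bookkeeping in matching $z=\pm\mu$ with $\lambda\pm i0$.
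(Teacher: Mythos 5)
Your proposal is correct and follows essentially the same route as the paper: Stone's formula applied to the explicit kernel $K_{33}$ of Theorem \ref{ThmFelix}, the symmetry $\gamma(-z)=\overline{\gamma(z)}$ to fold the two boundary values into a single integral over all of $\mathbb{R}$ producing the multiplier $e^{-itz^2}(1+\gamma)$, and $|\gamma|=1$ together with the rapid decay of $\mathcal{F}(\tilde u_{0,3})$ to remove the frequency cutoff. The only (harmless) organizational difference is that the paper first works on a band $(a,b)$ — where the limiting absorption is already justified by Theorem \ref{res.id}, since $d_c$ has no real zeros and hence no uniform lower bound is needed on a compact band — and only lets $a\to0$, $b\to\infty$ in the final formula, so the uniform-in-frequency control you worry about enters only in the last dominated-convergence step, exactly where $|\gamma|=1$ supplies it.
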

\begin{proof}
For $0<a<b$, we have
\[
(\mathbb{I}_{(a,b)} u)(t)=(e^{-itH}\mathbb{I}_{(a,b)})u_0,
\]
and therefore by Theorem \ref{res.id}, we get
\[
(\mathbb{I}_{(a,b)} u)_3(t,x)=
-\frac{2}{\pi}\int_{\sqrt{a}}^{\sqrt{b}} e^{-it\mu^2}
\int_0^\infty 
\Im K_{33}(x,y,\mu^2) u_{0,3}(y)\, dy
\mu \, d\mu.
\]
Hence using the expression of the kernel \eqref{formuleu3}, we get
\beqs
(\mathbb{I}_{(a,b)} u)_3(t,x)&=&
\frac{1}{\pi}\int_{\sqrt{a}}^{\sqrt{b}} e^{-it\mu^2}
\int_0^\infty 
u_{0,3}(y)
\big\{\cos(\mu(x-y)) (1+\Re \gamma(\mu))
\\
&&\hspace{2cm}-
\sin(\mu(x-y)) \Im \gamma(\mu)\big\} dy \,
 d\mu.
\eeqs

By using the expression of the cosinus and the sinus in term of the imaginary powers, we see that
\beqs
\int_0^\infty 
u_{0,3}(y)
 \cos(\mu(x-y))dy&=&
 \frac{\sqrt{2\pi}}{2}\left(e^{i\mu x} \mathcal{F} (\tilde u_{0,3})(\mu)+
 e^{-i\mu x} \mathcal{F} (\tilde u_{0,3})(-\mu)\right),
 \\
 \int_0^\infty 
u_{0,3}(y)
 \cos(\mu(x-y))dy&=&
 \frac{\sqrt{2\pi}}{2i} \left(e^{i\mu x} \mathcal{F} (\tilde u_{0,3})(\mu)-
 e^{-i\mu x} \mathcal{F} (\tilde u_{0,3})(-\mu)\right).
 \eeqs
Inserting these two expressions in the previous one, we obtain
\beqs
(\mathbb{I}_{(a,b)} u)_3(t,x)&=&
\frac{1}{\sqrt{2\pi}}  
\int_{\sqrt{a}}^{\sqrt{b}} e^{-it\mu^2} \Big(
(1+\Re \gamma(\mu))
 \left(e^{i\mu x} \mathcal{F} (\tilde u_{0,3})(\mu)+
 e^{-i\mu x} \mathcal{F} (\tilde u_{0,3})(-\mu)\right)
\\
&&\hspace{2cm}+i \Im \gamma(\mu)
\left(e^{i\mu x} \mathcal{F} (\tilde u_{0,3})(\mu)-
 e^{-i\mu x} \mathcal{F} (\tilde u_{0,3})(-\mu)\right) \Big) \,
 d\mu\\
 &=&
\frac{1}{\sqrt{2\pi}}  \Big(
\int_{\sqrt{a}}^{\sqrt{b}} e^{-it\mu^2}  
(1+\Re \gamma(\mu))
 e^{i\mu x} \mathcal{F} (\tilde u_{0,3})(\mu)\,d\mu
 \\
 &+&\int_{\sqrt{a}}^{\sqrt{b}} e^{-it\mu^2}  
(1+\Re \gamma(\mu))
 e^{-i\mu x} \mathcal{F} (\tilde u_{0,3})(-\mu)\,d\mu
\\
&+&i\int_{\sqrt{a}}^{\sqrt{b}} e^{-it\mu^2}   \Im \gamma(\mu)
e^{i\mu x} \mathcal{F} (\tilde u_{0,3})(\mu)\,
 d\mu
\\
&-&i\int_{\sqrt{a}}^{\sqrt{b}} e^{-it\mu^2}   \Im \gamma(\mu)
 e^{-i\mu x} \mathcal{F} (\tilde u_{0,3})(-\mu)\,
 d\mu \Big) .
\eeqs
Performing the change of variable $z=-\mu$ in the second and fourth integrals, we find
\beqs
(\mathbb{I}_{(a,b)} u)_3(t,x)&=&
\frac{1}{\sqrt{2\pi}}  \Big(
\int_{\sqrt{a}}^{\sqrt{b}} e^{-it\mu^2}  
(1+\Re \gamma(\mu))
 e^{i\mu x} \mathcal{F} (\tilde u_{0,3})(\mu)\,d\mu
 \\
 &+&\int_{-\sqrt{b}}^{-\sqrt{a}} e^{-itz^2}  
(1+\Re \gamma(-z)
 e^{iz x} \mathcal{F} (\tilde u_{0,3})(z)\,dz
\\
&+&i\int_{\sqrt{a}}^{\sqrt{b}} e^{-it\mu^2}   \Im \gamma(\mu)
e^{i\mu x} \mathcal{F} (\tilde u_{0,3})(\mu)\,
 d\mu
\\
&-&i\int_{-\sqrt{b}}^{-\sqrt{a}}e^{-itz^2}   \Im \gamma(-z)
 e^{iz x} \mathcal{F} (\tilde u_{0,3})(z)\,
 dz \Big).
\eeqs
By noticing that $\gamma(-z)=\overline{\gamma(z)}$, we then have
\[
\Re\gamma(-z)=\Re\gamma(z), \quad \Im\gamma(-z)=-\Im\gamma(z),
\] and consequently we find
\beqs
(\mathbb{I}_{(a,b)} u)_3(t,x)&=&
\frac{1}{\sqrt{2\pi}}  \Big(
\int_{(\sqrt{a},\sqrt{b})\cup (-\sqrt{b},-\sqrt{a})} e^{-it\mu^2}  
(1+\Re \gamma(\mu))
 e^{i\mu x} \mathcal{F} (\tilde u_{0,3})(\mu)\,d\mu
\\
&+&i\int_{(\sqrt{a},\sqrt{b})\cup (-\sqrt{b},-\sqrt{a})} e^{-it\mu^2}   \Im \gamma(\mu)
e^{i\mu x} \mathcal{F} (\tilde u_{0,3})(\mu)\,
 d\mu\Big)
 \\
 &=&
 \frac{1}{\sqrt{2\pi}} \int_{(\sqrt{a},\sqrt{b})\cup (-\sqrt{b},-\sqrt{a})} e^{-it\mu^2}  (1+ \gamma(\mu))
 e^{i\mu x} \mathcal{F} (\tilde u_{0,3})(\mu)\,d\mu.
\eeqs
Since $\mathcal{F} (\tilde u_{0,3})$ is rapidly decreasing, hence integrable in $\mathbb{R}$ and due to the property \eqref{valeurabsoluegamma}, we can pass to the limit as $a$ goes to 0 
and $b$ goes to infinity and find \eqref{formuleu3Schrodinger}.
\end{proof}

Note that passing to the limit in \eqref{formuleu3Schrodinger} as $t$ goes to zero, we find
\[
u_{0,3}(x)=u_3(0, x)=\mathcal{F}^{-1}\left((1+\gamma)
\mathcal{F} (\tilde u_{0,3})\right)(x), \forall x>0,
\]
and since 
\[
\mathcal{F}^{-1}\left((1+\gamma)
\mathcal{F} (\tilde u_{0,3})\right)(x)=
u_{0,3}(x)+\mathcal{F}^{-1}\left(\gamma
\mathcal{F} (\tilde u_{0,3})\right)(x),  \forall x>0,
\]
we deduce that
\[
\mathcal{F}^{-1}\left(\gamma
\mathcal{F} (\tilde u_{0,3})\right)(x)=0,  \forall x>0,
\]
for any $u_{0,3}\in \mathcal{D}(0,\infty).$
This is a surprising property that is not easy to check using the form of $\gamma$ but is true due to our previous Theorem.

\ali{
The fact that the coefficient $\gamma(\mu)$, which carries all the information of the influence of the heads of the double tadpole on its queue for the frequency $\mu$, has the remarkable property  
$|\gamma(\mu)| = 1 \ \ \forall \mu \in \mathbb{C}$, should be the consequence of hidden structures worth to be revealed. The analogous phenomenon occurs for the Y-graph.
}
\subsection{A spider graph}

This graph corresponds to the case of $\mathcal{R}_B$ equal to a circle attached with $N\in \mathbb{N}$ half-lines at 0, see  Fig. \ref{figspider}.  Let us notice that the case $N=1$ corresponds to the tadpole \cite{amamnicbis}. By a scaling argument, we can assume that the length of the circle is 1. For the same reason than in the case of the tadpole,  
\[
S_d=\{(2k\pi)^2\}_{k\in \mathbb{N}},
\]
each eigenvalue being simple. Consequently by subsection \ref{sscommensurable}, all the assumptions \eqref{factdetD(z)}, \eqref{serge:ass0},  \eqref{serge:ass2},   and \eqref{lowerbddc}   hold.
Note that \sn{we can check that the assumptions \ref{lassumptionsergeYj} and \ref{lassumptionsergeYB}  hold. In this last case, we use the same arguments \ali{as} in subsection \ref{ssdouble tadpole} because $Y_B$ is spanned by the set $\{\cos(2k\pi x): k\in \mathbb{N}\}$,
see the property \eqref{sn:8/4:1}.
} 

\begin{center} 
\includegraphics[scale=0.60]{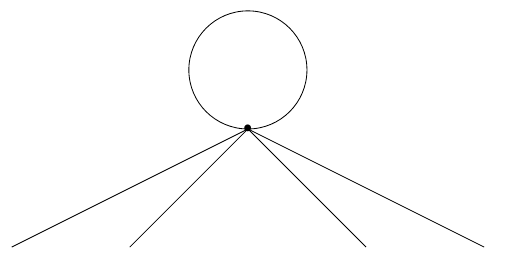}
\captionof{figure}{A spider graph \label{figspider}}
\end{center}

\subsection{A $Y$ graph}
Here we assume that  $\mathcal{R}_B$ is made of two intervals, one of lenght 1 and the other one of lenght
$\ell$, and that an infinite edge is attached at one of their extremities  (the sole vertex of the graph), see  Fig. \ref{fig5}.

\begin{center} 
\includegraphics[scale=0.60]{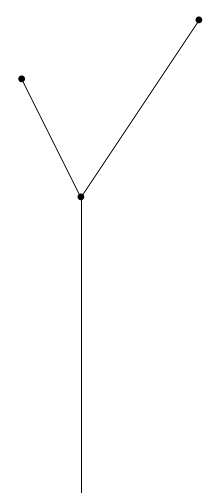}
\captionof{figure}{A $Y$ graph \label{fig5}}
\end{center}

In this case, one can show that
\[
\det \mathcal{D}(z)=e^{i(1+\ell)z} (1+e^{-2iz}+e^{-i2\ell z}-3e^{-i2(1+\ell)z}),
\]   
and
$S_d=\emptyset$ if $\ell$ is an irrational number, otherwise 
\[
S_d=\{(mq\pi)^2\}_{m\in \mathbb{N}},
\]
each eigenvalue being simple,
if $\ell=\frac{p}{q}$ is a rational number with $p$ and $q$ two positive relatively prime numbers.

Hence if $\ell$ is a rational number, by subsection \ref{sscommensurable}
and the simplicity of the eigenvalues, all the assumptions \eqref{factdetD(z)}, \eqref{serge:ass0},  \eqref{serge:ass2}, and \eqref{lowerbddc}   hold.
\sn{Furthermore we can also show that the
Assumption \ref{lassumptionsergeYj} with $X_3(z)=0$, while the assumption \ref{lassumptionsergeYB} does not hold.}

On the contrary, if $\ell$ is an irrational number, even if
$d(z)=\det \mathcal{D}(z)$ is different from zero for all real numbers $z$ (and consequently Assumption \ref{lassumptionserge} holds),
 let us check that \eqref{lowerbddc} does not hold.  Indeed in this case, by Liouville's theorem, there always exist two sequences of   positive integers $p_n$ and $q_n$ (that both tend to infinity as $n$ goes to infinity) such that
\begin{equation}\label{Liouville}
\left|\ell-\frac{p_n}{q_n}\right|<\frac{1}{q_n^2}, \forall n\in \mathbb{N}.
\end{equation}
Let us now define
the sequence
\[
z_n=\frac{p_n\pi}{\ell}.
\]
With this choice we see that
\[
d(z_n)=2(-1)^{p_n}(1-e^{2iz_n}).
\]
But multiplying \eqref{Liouville} by $\frac{q_n\pi}{\ell}$, we get
\[
|q_n\pi-z_n|<\frac{\pi}{\ell q_n},
\]
which means that we can write
$z_n=q_n \pi+r_n$,
with 
\[
|r_n|<\frac{\pi}{\ell q_n},
\]
that tends to zero as $n$ goes to infinity. This implies that
\[
d(z_n)=2(-1)^{p_n}(1-e^{2ir_n}),
\]
that tends to zero as $n$  goes to infinity. 

Note that a similar phenomenon that the one described in Theorem \ref{ThmFelix} appears if  $\ell$ is an irrational number.

\section {Conclusion}

For   graphs for which some of our assumptions \eqref{factdetD(z)}, \eqref{serge:ass0},  \eqref{serge:ass2}, and \eqref{lowerbddc} do not hold, the question of the   $L^1-L^\infty$ decay of $e^{itH} P_{ac}$ remains open. We believe that \eqref{factdetD(z)}, \eqref{serge:ass0}, and \eqref{serge:ass2} are always satisfied, while we have seen that this is not always the case for \eqref{lowerbddc}. We actually conjecture that when \eqref{lowerbddc} does not hold, the decay in $t^{-\frac{1}{2}}$ is no more valid but deteriorates into $t^{-s}$, with $0<s<\frac{1}{2}$.

\end{document}